\DeclareMathAlphabet{\mathpzc}{OT1}{pzc}{m}{it}
\title[Resonances on products of two rank-one spaces]{Resonances for the Laplacian on Products\\ of two Rank One Riemannian Symmetric Spaces}
\author{J. Hilgert}
\address{Department of Mathematics,
Paderborn University,
Warburger Str. 100,
D-33098 Paderborn,
Germany}
\email{hilgert@math.uni-paderborn.de}
\author{A. Pasquale}
\address{Universit\'e de Lorraine, Institut Elie Cartan de Lorraine, UMR CNRS 7502, Metz, F-57045, France}
\email{angela.pasquale@univ-lorraine.fr}
\author{T. Przebinda}
\address{Department of Mathematics, University of Oklahoma, Norman, OK 73019, USA}
\email{tprzebinda@ou.edu}
\newcommand{\eps}{\varepsilon}
\def\R{\mathbb{R}}
\def\C{\mathbb{C}}
\def\a{\mathfrak a}
\def\c{\mathfrak c}
\def\ss1{\mathfrak s_{\overline 1}}
\def\hs1{\mathfrak h_{\overline 1}}
\def\Dg{\mathrm{D}}
\def\G{\mathrm{G}}
\def\K{\mathrm{K}}
\def\H{\mathrm{H}}
\def\M{\mathrm{M}}
\def\Sg{\mathrm{S}}
\def\Spin{\mathrm{Spin}}
\def\Bbb{\mathbb}
\def\H{\mathrm{H}}
\def\SL{\mathrm{SL}}
\def\SO{\mathrm{SO}}
\def\SU{\mathrm{SU}}
\def\Sp{\mathrm{Sp}}
\def\Ug{\mathrm{U}}
\def\Eg{\mathrm{E}}
\def\Fg{\mathrm{F}}
\def\id{\mathrm{id}}
\def\B{\mathrm{B}}
\def \wt{\widetilde}
\def\bi{\mathbf{i}}
\def\W{\mathsf{W}}
\def\X{\mathsf{X}}
\newcommand{\Rlog}{R_{\rm log}}
\newcommand{\Ulog}{U_{\rm log}}
\def\Zb{\mathbb {Z}}
\def\Re{\mathop{\hbox{\rm Re}}\nolimits}
\def\Im{\mathop{\hbox{\rm Im}}\nolimits}
\newcommand\floor[1]{\lfloor #1\rfloor}
\def\lim{\mathop{\hbox{\rm lim}}\nolimits}
\newcommand\inner[2]{\langle #1,#2\rangle}
\def\Res{\mathop{\hbox{\rm Res}}\limits}
\def\ker{\mathop{\hbox{\rm ker}}\nolimits}
\newcommand{\cHC}{c_{\rm{HC}}}
\newcommand{\cz}{{\mathop{\mathrm c}}}
\newcommand{\sz}{{\mathop{\mathrm s}}}
\def\fonttitre{\textsf}
\newcounter{thh}
\newtheorem{thm}[thh]{\fonttitre{Theorem}}
\newtheorem{pro}[thh]{\fonttitre{Proposition}}
\newtheorem*{pro*}{\fonttitre{Proposition}}
\newtheorem{cor}[thh]{\fonttitre{Corollary}}
\newtheorem*{coro*}{\fonttitre{Corollary}}
\newtheorem{lem}[thh]{\fonttitre{Lemma}}
\theoremstyle{definition} \newtheorem{rem}{\fonttitre{Remark}}
\newtheorem*{defi*}{\fonttitre{D??finition}}
\newtheorem*{nota*}{\fonttitre{Notation}}
\newenvironment{prf}{\begin{proof}}{\end{proof}}
\def\muet{ \ifthenelse{\equal{a}{b}}}
\def\nn{\nonumber}
\begin{document}
\enlargethispage*{1cm}

\date{}
\subjclass[2010]{Primary: 43A85; secondary: 58J50, 22E30}
\keywords{Resonances, resolvent, Laplacian, Riemannian symmetric spaces of the noncompact type, direct products, rank one}

\begin{abstract}
Let $\X=\X_1 \times \X_2$ be a direct product of two rank-one Riemannian symmetric spaces of the noncompact type. We show that when at least one of the two spaces is isomorphic to a real hyperbolic space of odd dimension, the resolvent of the Laplacian of $\X$ can be lifted to a holomorphic function on a Riemann surface which is a branched covering of $\mathbb C$. In all other cases, the resolvent of the Laplacian of $\X$ admits a singular meromorphic lift. The poles of this function are called the resonances of the Laplacian. We determine all resonances and show that the corresponding residue operators are given by convolution with spherical functions parameterized by the resonances. The ranges of these operators are finite dimensional and explicitly realized as direct sums of finite-dimensional irreducible spherical representations of the group of the isometries of $\X$. 
\end{abstract}

\maketitle
\tableofcontents

\section*{Introduction}

In the last thirty years, the problem of the analytic or meromorphic extension of the resolvent of the Laplacian across its continuous spectrum has been studied for several classes of noncompact complete Riemannian manifolds. Examples include asymptotically hyperbolic manifolds, such as the real hyperbolic space and its convex co-compact quotients (\cite{MM87, GZ95, Gui05}), manifolds with asymptotically cylindrical ends (\cite{G89, Mel93}), and Riemannian symmetric or locally symmetric spaces of the noncompact type  (\cite{G89, GZ95, BO00, MW00, MV02, MV05,  Str05, MV07, HP09, HPP14}).  

The extension is obtained by considering the resolvent as an operator acting on a suitable dense subspace of $L^2(\X)$, the Hilbert space of square-integrable functions on the considered Riemannian manifold $\X$, rather than on $L^2(\X)$ itself. 
When this extension turns out to be meromorphic, then its poles are called the \textit{resonances} of the Laplacian of $\X$. These complex numbers plays the role of the eigenvalues for a Laplacian with discrete spectrum and are of interest in physics as they are related to metastable states
(see e.g. \cite{Hunz90, Muck04, CGH06}).

The basic problems are the existence, location, counting estimates and geometric interpretation of the resonances.
These problems are nowadays well understood in the case of Riemannian symmetric spaces of the noncompact type having rank one (see \cite{GZ95, BO00, HP09}). The situation is dramatically different in higher rank, where the existence of resonances has been proven so far only for one particular Riemannian symmetric space of rank two (the space $\SL(3,\R)/\SO(3)$, see \cite{HPP14}). 
The purpose of this paper is to present complete results for the class of rank-two symmetric spaces $\X$ which are direct products $\X_1\times \X_2$ of two Riemannian symmetric spaces
$\X_1$ and  $\X_2$ of the noncompact type and rank one.  These are the simplest cases of higher rank Riemannian symmetric spaces of the noncompact type. Still, they present some interesting new phenomena which we hope will shed some light on the general case. The guiding role of direct products in the understanding of the general Riemannian symmetric case has also been pointed out in \cite{MV02}. Note however that there are no results in \cite{MV02} hinting at the existence or non-existence of resonances on direct products of Riemannian symmetric spaces. 

Before presenting the main results of this paper, we review some background information and give a more precise description of the problems we are looking at. 
Recall that a Riemannian symmetric space of the noncompact type is a homogenous space $\X=\G/\K$, where $\G$ is a noncompact connected real semisimple Lie group with finite center and $\K$ is a maximal compact subgroup of $\G$. The (real) rank of $\X$ is the dimension 
of a Cartan subspace of $\G$: if $\mathfrak{g}=\mathfrak{k}\oplus \mathfrak{p}$ is the Cartan decomposition of the Lie algebra $\mathfrak{g}$ of $\G$, then a Cartan subspace is a maximal abelian subspace $\mathfrak{a}$ of $\mathfrak{p}$. Geometrically, the rank of $\X$ is the maximal dimension of a complete, totally geodesic submanifold of $\X$ that is isometric to a flat Euclidean space $\R^n$ (see e.g. \cite[\S 2.10]{Eb96}). The simplest examples of Riemannian symmetric spaces of the noncompact type are the real hyperbolic spaces $H^n(\R)=\SO_0(1,n)/\SO(n)$. Here $\G=\SO_0(1,n)$ is the generalized Lorentz group, i.e. the connected component of the identity in the group of $(n+1)\times (n+1)$  matrices of determinant $1$ preserving the standard bilinear form on $\R^{n+1}$ of signature $(1,n)$, and $\K=\SO(n)$ is the special orthogonal group of order $n$. They are Riemannian symmetric spaces of rank one. 

The (positive) Laplacian $\Delta$ of a Riemannian symmetric space of the noncompact type $\X$ is an essentially self-adjoint operator on $L^2(\X)$ with continuous spectrum $[\rho_\X^2,+\infty[$, where $\rho_\X^2$ is a positive constant depending on the structure of $\X$. Its resolvent $(\Delta-u)^{-1}$ is therefore a bounded linear operator on $L^2(\X)$ depending holomorphically in $u \in \C \setminus [\rho_\X^2,+\infty[$. 
Denote by $\inner{\cdot}{\cdot}_2$ the inner product on $L^2(\X)$.
Let the resolvent act on the dense subspace $C_c^\infty(\X)$ of $L^2(\X)$ consisting of the smooth compactly-supported functions. 
The problem of extending the resolvent is then to see whether there is some Riemann surface extending $\C \setminus [\rho_\X^2,+\infty[$ to which 
all functions $\inner{(\Delta-u)^{-1}f}{g}_2$, where $f,g\in C_c^\infty(\X)$, admit a meromorphic continuation in $u$.  

As it is customary in the study of resonances, one adds a shift to the Laplacian to bring the 
bottom of its spectrum to the origin of the complex plane and makes the quadratic change of variable $u=z^2$. Let $\C^+=\{z\in \C:\Im z >0\}$ be the upper half-plane and set  
\[
R(z)=(\Delta-\rho_\X^2-z^2)^{-1}\,.
\]
Then the original extension problem is equivalent to meromorphically continue the function 
\[ 
\C^+ \ni z \to \inner{R(z)f}{g}_2 \in \C
\]
across the real axis, for arbitrary $f,g\in C_c^\infty(\X)$.

A natural strategy in this context is to use Fourier analysis. Indeed, the Plancherel theorem for the Helgason-Fourier transform on $\X$ provides an explicit integral formula for 
$\inner{R(z)f}{g}_2$ for all 
$f,g \in L^2(\X)$. Moreover, if $f\in C_c^\infty(\X)$ then, by the Paley-Wiener theorem, $R(z)f$ 
is a smooth function on $\X$.
One obtains an explicit (though complicated) formula for this function as an integral in the real spectral variables; see section \ref{subsection:resolvent kernel}.  

The spectral variables of the Helgason-Fourier transform are the elements $\lambda \in \mathfrak{a}^*_\C$, where $\mathfrak{a}^*_\C$ is the complexified dual of the fixed Cartan subspace $\mathfrak{a}$. The elements of the dual $\mathfrak{a}^*$ of $\mathfrak{a}$ are real spectral variables. The Killing form of $\mathfrak{g}$ endows $\mathfrak{a}^*$ with an inner product $\inner{\cdot}{\cdot}$ which we extend by $\C$-bilinearity to  $\mathfrak{a}^*_\C$. In the spectral variables, the resolvent $R(z)$ becomes the multiplication operator by $(\inner{\lambda}{\lambda}-z^2)^{-1}$, a rational function of $\lambda$. Additional singularities, located along finitely many locally-finite infinite families of parallel affine hyperplanes in  $\mathfrak{a}^*_\C$, arise from the Plancherel density. They determine our meromorphic extension and the residues. The Plancherel density is of the form $[\cHC(\bi\lambda)\cHC(-\bi\lambda)]^{-1}$, where $\cHC$ is Harish-Chandra's $c$-function and $\bi$ denotes the complex multiplication in $\mathfrak{a}_\C^*$ with respect to $\mathfrak{a}^*$. Its explicit expression from Gindikin-Karpelevic's formula plays a major role in all computations; see \eqref{eq:c} and \eqref{eq:Plancherel-density}.

When the Plancherel density has no singularities, the resolvent has a holomorphic extension, either to the entire complex plane, if the rank of $\X$ is odd, or to a logarithmic Riemann surface above $\C$ and branched at the origin $0$, if the rank of $\X$ is even; see \cite[Theorem 3.3(2)]{Str05}. Geometrically, the absence of singularities of the Plancherel density corresponds to the condition that the Lie algebra $\mathfrak{g}$ of $\G$ possesses a unique conjugacy class of Cartan subalgebras. For instance, this happens when $\mathfrak{g}$ has a complex structure and, in rank one, when $\X=H^{2n+1}(\R)$ is a real hyperbolic space of odd dimension. For a product $\X=\X_1\times \X_2$ of rank-one Riemannian symmetric spaces of the noncompact type, this condition corresponds to the case where both $\X_1$ and $\X_2$ are odd-dimensional real hyperbolic spaces.  In the general case, the condition can also be stated in terms of the root system $\Sigma$  of the pair $(\mathfrak{g},\mathfrak{a})$, where $\mathfrak{a}$ is a Cartan subspace of $\mathfrak{g}$. Indeed, $\mathfrak{g}$ has a unique conjugacy class of Cartan subalgebras if and only if $\Sigma$ is reduced and all root multiplicities are even. See
\cite[Chapter IX, Theorem 6.1, and Chapter X, Exercise F4]{He1}.

For all rank-one Riemannian symmetric spaces of the noncompact type $\X$ different from $H^{2n+1}(\R)$,  the Plancherel measure is singular and the extended resolvent has first order poles, i.e. the Laplacian of $\X$ admits resonances in these cases, see e.g. \cite[Theorem 3.8]{HP09} or subsection \ref{subsection:rankone} below. One of the results of this article shows that the singularity of the Plancherel density is not sufficient for the existence of resonances. In Theorem \ref{thm:holoext-even}
we prove that the Laplacian of the product $\X$ of two Riemannian symmetric spaces of rank-one has no resonances when exactly one of the two rank-one factors is an odd dimensional real hyperbolic space $H^{2n+1}(\R)$ for some $n$. In this case, the Plancherel density of $\X$ is singular, but ``not singular enough'' to get resonances.  

For arbitrary Riemannian symmetric spaces of noncompact type, the dichotomy of the extension of the resolvent in the odd and the even rank situations, mentioned above for the case of one conjugacy class of Cartan subalgebras, holds in general. Namely, there is a constant $L>0$  so that the resolvent $R$ has a holomorphic extension across the spectrum of $\X$, either to an open domain of the form $\C\setminus -i[L,+\infty)$, if the rank of $\X$ is odd, or to a logarithmic Riemann surface above $\C\setminus -i[L,+\infty)$ and branched at the origin $0$, if the rank of $\X$ is even. This was proven in \cite[Theorem 3.2 and 3.3]{Str05} and \cite[section 7]{MV05}. 
The constant $L$ depends on the structure of $\X$. It is determined by the condition that 
the Plancherel density $r\to [\cHC(\bi r\omega)\cHC(-\bi r\omega)]^{-1}$ is a holomorphic function of $r \in \C\setminus i((-\infty,-L]\cup [L,+\infty))$ for all $\omega \in \mathfrak{a}^*$
of norm $|\omega|=1$. One can check from \cite[Corollary 2.2]{HP09} that 
\begin{equation}
\label{eq:L}
L=\min\Big\{\frac{1}{2}\big(m_\beta+\frac{m_{\beta/2}}{2}\big) |\beta|; \ \text{$\beta \in \Sigma^+_*$, $m_\beta$ odd} \Big\}\,,
\end{equation}
where $\Sigma^+_*$ is the set of positive unmultipliable roots (i.e. the positive roots $\beta$ so that $2\beta \notin \Sigma$) and $m_\beta$ denotes the multiplicity of the root $\beta$.
Observe that $L=+\infty$ in case all root multiplicities are even. 

The above results show that the region where the possible resonances are located, is the half-line $i(-\infty,-L]$ on the imaginary axis. Understanding the extension of the resolvent across this half-line is the crucial matter. This is the content of sections \ref{section:contour}, 
\ref{section:meroext-odd} and \ref{section:holoext-even} of this paper. For the case where none of the spaces 
$\X_1$ and $\X_2$ is isomorphic to an odd-dimensional real hyperbolic spaces (called below the
\textit{case of two odd multiplicities}), the main result, providing the meromorphic extension of the resolvent across the negative imaginary axis to a suitable Riemann surface as well as the location of the resonances, is Theorem \ref{thm:meroextshiftedLaplacian}. For the case where exactly one of the spaces $\X_1$ or $\X_2$ is isomorphic to an odd-dimensional real hyperbolic spaces (called below the \textit{case of one odd multiplicity}), the main result, providing the holomorphic extension of the resolvent across the negative imaginary axis to a suitable Riemann surface, is Theorem \ref{thm:holoext-even}.  

We refer the reader to the later parts of this paper for the precise statements of these theorems. Here, we point out some features making these results special with respect to those known at present on the resonances of the Laplacian on Riemannian symmetric spaces of the noncompact type.

Recall that resonances are known to exists only when $\X$ is either of rank one and $\neq H^{2n+1}(\R)$, or $\X=\SL(3,\R)/\SO(3)$. In both cases, the Laplacian has an infinite sequence of resonances that are regularly spaced along $i(-\infty,-L]$. The point $-iL$, which by construction is the first radial value on the negative imaginary axis of the singularities of the Plancherel density, is also the first resonance of the Laplacian. Its absolute value squared $L^2$ agrees with the bottom $\rho_\X^2$ of the spectrum of the Laplacian. Furthermore, for $\SL(3,\R)/\SO(3)$, the resonances occur at the branching points of the Riemann surface to which the meromorphic extension of the resolvent across $i(-\infty,-L]$ takes place. On the other hand, for the product $\X=\X_1 \times \X_2$ in the case of two odd multiplicities, the first resonance occurs at $-i\rho_\X$, the point of the negative imaginary axis with absolute value squared equal to $\rho_\X^2$. But $\rho^2_\X=\rho_{\X_1}^2+\rho_{\X_2}^2>L^2=\min\{\rho^2_{\X_1},\rho^2_{\X_2}\}$. Moreover, the resonances are not regularly spaced along $i(-\infty,-\rho_\X]$ and they are generally not among the branching points of the Riemann surface to which the meromorphic extension of the resolvent takes place.

If $z_0$ is a resonance of the Laplacian, then the \textit{(resolvent) residue operator at $z_0$}
is the linear operator 
\begin{equation}
\label{residueop1}
{\Res}_{z_0}: C^\infty_c(\X)\to C^\infty(\X)
\end{equation}
defined by 
\begin{equation}
\label{residueop2}
\big({\Res}_{z_0} f\big)(y)={\Res}_{z=z_0} [R(z)f](y) \qquad (f\in C_c^\infty(\X), \, y\in \X)\,.
\end{equation}
If the meromorphic extension takes place on a Riemann surface, then the right-hand side of 
(\ref{residueop2}) is computed with respect to some coordinate charts and hence determined up to constant multiples. However, the image ${\Res}_{z_0}\big(C^\infty_c(\X)\big)$ is a well-defined
subspace of $C^\infty(\X)$. Its dimension is the \textit{rank} of the residue operator at $z_0$. In the known rank-one and $\SL(3,\R)/\SO(3)$ situations, 
\begin{equation}
\label{residueop3}
{\Res}_{z_0}\big(C^\infty_c(\X)\big)=\{ f \times \varphi_{\lambda_0}; \ f\in  C_c^\infty(\X)\}\,,
\end{equation}
where $\lambda_0\in \mathfrak{a}^*$ depends on $z_0$, $\varphi_{\lambda_0}$ denotes the
spherical function of spectral parameter $\lambda_0$, and $\times$ denote the convolution on $\X$; see \ref{subsection:X} for more information on these objects. For $\X=\X_1\times \X_2$, the expression of the residue operators is generally more complicated. For instance, the convolution is in general taken with a suitable linear combination of spherical functions. See \eqref{eq:Reskeps}.

The group $\G$ acts on the space \eqref{residueop3} by left translation. We prove in Proposition \ref{pro:rank} that this is a generally reducible representation of $\G$ with explicit decomposition as a direct sum of finite-dimensional irreducible spherical representations. It follows, in particular, that all residue operators have finite rank, which might be unexpected for a higher rank symmetric space. Also this representation theoretic aspect of the resonances presents some new features compared to the known situations. Indeed, in the rank-one case with odd multiplicities, the representation spaces arising from the residues are finite-dimensional spherical representations but they are always irreducible; in the case of $\SL(3,\R)/\SO(3)$, they are irreducible and infinite dimensional. 

\smallskip 

This paper is organized as follows. In section \ref{section:preliminaries} we collect some notation and  preliminary results. The case of rank one is reviewed in subsection \ref{subsection:rankone}.
   In section \ref{section:holo-fct}, we write the integral 
formula for resolvent of the Laplacian in polar coordinates on $\mathfrak{a}^*$ and, using \cite{Str05, MV05}, we extend it holomorphically to a logarithmic Riemann surface above $\C \setminus i(-\infty,-L]$ and branched along $(-\infty,0]$. We then prove that, for arbitrarily fixed $f\in C_c^\infty(\X)$ and $y\in \X$, the existence of the meromorphic extension across $i(-\infty,-L]$ of the resolvent $z \to [R(z)f](y)$  is equivalent to that of a certain function $z \to [F(z)f](y)$.
See \eqref{2.1}, Lemma \ref{lemma:holoextF} and Proposition \ref{pro:holoextRF}. Section \ref{section:holo-fct} follows quite closely the construction done for $\SL(3,\R)/\SO(3)$ in \cite[section 2]{HPP14}.  

The following three sections, which form the core of this paper, contain the analysis leading to the extension of the function $z \to [F(z)f](y)$. To simplify our notation, we will omit the dependence of this function on $f$ and $y$. We will therefore indicate it as $z\to F(z)$. This function is defined by an integral over the unit circle $S^1$ in $\mathfrak{a}^*$. Since $\mathfrak{a}^*$ is a 2-dimensional real space, we can identify $\mathfrak{a}^*$ with $\C$ and use complex analysis. The integrand of $F$ is a meromorphic function on $\mathfrak{a}^*$.  By fixing $z$ and deforming $S^1$ into circles of different radius $r>0$, we pick up residues when crossing the singularities of the Plancherel density. Each residue is a function $G_{j,\ell}$ of the variable $z$. Here $\ell\in \Zb_{\geq 0}$ is a nonnegative integral parameter and the index $j\in \{1,2\}$ refers to the fact that the residue comes from a singularity of the Plancherel density of $\X_1$ or $\X_2$. One has therefore to distinguish the case of two odd multiplicities from the simpler case of one odd multiplicity, where one of the two Plancherel density is nonsingular. 

We start with the case of two odd multiplicities, treated first in subsection \ref{subsection:contours-odd-mul} and then completed in section \ref{section:meroext-odd}.  

The number of residue functions $G_{j,\ell}$ one picks up by deforming the circle $S^1$ depends 
a priori on the fixed value $z$ and on the radius $r>0$ of the circle which is the new contour of integration. In the last part of subsection \ref{subsection:contours-odd-mul} we make this dependence local. 

The next step in the analysis is Lemma \ref{lemma:Gjkj}, where we suitably rewrite the functions $G_{j,\ell}$ as meromorphic multi-valued functions. This allow us, in subsection \ref{subsection:meroextGj}, to identify the concrete Riemann surface 
$\M_{j,\ell}$ to which a given function $G_{j,\ell}$ lifts and extends meromorphically. The extended function is denoted by $\wt G_{j,\ell}$.
Its singularities are determined in Lemma \ref{2.8} and the residues at the singularities with respect to a coordinate chart are computed in Lemma \ref{lem:chart-expressions}.

The Riemann surface $\M_{j,\ell}$ is a 2-to-1 cover of $\C\setminus \{0,\pm iL_{j,\ell}\}$, with branching points at $z=\pm iL_{j,\ell}$. The branching points are related to the fact that $G_{j,\ell}$ originates from taking residues at a pole of the Plancherel density of $\X_j$. On the other hand, the singularities of $\wt G_{j,\ell}$ combine the value $L_{j,\ell}$ with the contribution of all the singularities of the Plancherel density of the other symmetric space. It is possible that there are two positive integers $\ell_1$ and $\ell_2$ for which $L_{1,\ell_1}=L_{2,\ell_2}$. This explains why we introduce, in \eqref{eq:Gell} and \eqref{eq:wtGell}, some auxiliary functions $G_{(\ell)}$, $\wt G_{(\ell)}$ and the corresponding concrete Riemann surfaces $\M_{(\ell)}$, to take into account the fact that the $L_{j,\ell}$ might not be all different. 

In subsection \ref{piecewise} we put together the local extensions of the function $F$.
The main result is Proposition \ref{pro:FextendedMainSection-Z}. For each $m\in \Zb_{\geq 0}$ we construct some ``sufficiently thin" neighborhood  of the interval $-i[L_m,L_{m+1})$, where $\{-iL_\ell\}_{\ell \in \Zb_{\geq 0}}$  are  
the different branching points, ordered according to their distance from the origin $0$. Here $L_0=L$. For $z\in W_{(m)}\setminus i\R$, the function $F$ can be written as $F(z)=F_{(m)}(z)+2\sum_{\ell=0}^{m} G_{(\ell)}(z)$, where $F_{(m)}$ is holomorphic. This provides a piecewise extension of $F$ away from $i(-\infty,-L]$. 

The meromorphic extension of $F$ across $i(-\infty,-L]$ is finally obtained in subsection \ref{subsection:extIm<0}. For a fixed positive integer $N$, we construct a Riemann surface 
$\M_{(N)}$ by ``pasting together'' the Riemann surfaces $\M_{(\ell)}$ to which all functions $G_{(l)}$, with $\ell=0,1,\dots,N$, admit meromorphic extension. Moving from branching point to branching point, all the local extensions of $F$ constructed in Proposition \ref{pro:FextendedMainSection-Z} are lifted to a neighborhood $\M_{\gamma_N}$  the branched curve $\gamma_N$ in $\M_{(N)}$ over the interval $-i(0,L_{N+1})$. The different pieces of $F$ have been constructed from intervals of the form $-i[L_\ell,L_{\ell+1})$ with the branching points of the $G_{(\ell)}$'s as endpoints only.  So -- despite the different nature of the branching points, of the meromorphic functions $G_{(\ell)}$ and their singularities --  the structure of the local extensions of $F$ is similar to the one we dealt with in extending the resolvent of the Laplacian of $\SL(3,\R)/\SO(3)$. The same method used to prove \cite[Theorem 19]{HPP14} therefore yields the final formula for the meromorphically extended lift $\wt F$ of $F$, given in Theorem \ref{thm:meroliftF}. The residues of $\wt F$ are computed in Proposition \ref{pro:residueswtF}.

The final subsection \ref{subsection:meroextresolvent} translates the results back to the resolvent. 
See Theorem \ref{thm:meroextshiftedLaplacian}.

In the case of one odd multiplicity, the various steps leading to the extension of the resolvent  can be easily deduced from the corresponding steps in the case of two odd multiplicities we just described.  Supposing that $\X_2=H^{2n+1}(\R)$,  there is one family of residual functions $G_{1,\ell}$. They extend and lift to the same Riemann surfaces $\M_{1,\ell}$ considered above. But there are no contributions from the singularities of the Plancherel density of $\X_2$. Hence the resulting functions $\wt G_{1,\ell}$ (and thus $\wt F$) turn out to be holomorphic. These results are collected in subsection \ref{subsection:contours-even-mul}  and section \ref{section:holoext-even}.

The final section \ref{section:residueops} studies the residue operators and provides their interpretation in terms of representation theory.

\subsubsection*{Acknowledgements} The second author would like to thank the University of Oklahoma for hospitality and financial support. The third author gratefully acknowledges partial support from the NSA grant H98230-13-1-0205. The case of direct products of rank-one Riemannian symmetric spaces of the noncompact type is a step for understanding the general Riemannian symmetric case. In particular, the question about the rank of the residue operators for direct products, was emphasized by A. Strohmaier in his overview talk ``Resonances and symmetric spaces'' at the workshop ``Analysis and Geometry of Resonances'', held at CIRM in March 2015. The authors would like to thank him for his interesting talk and the CIRM for hosting and funding this workshop. 

\medskip

\section{Notation and preliminaries}
\label{section:preliminaries}

\subsection{General notation}
We use the standard notation $\mathbb Z$, $\mathbb Z_{\geq 0}$, $\R$,  $\R^+$, $\C$ and $\C^\times$ for the integers, the nonnegative integers, the reals, the positive reals, the complex
numbers and the non-zero complex numbers, respectively. The upper half-plane in $\C$ is 
$\C^+=\{z \in \C:\Im z>0\}$; the lower half-plane $-\C^+$ is denoted $\C^-$. 
If $\X$ is a manifold, then $C^\infty(\X)$ and $C^\infty_c(\X)$ respectively denote the space of smooth functions and the space of smooth compactly supported functions on $\X$.

\subsection{Analysis on Riemannian symmetric spaces of the noncompact type}
\label{subsection:X}

In this subsection we recall some basic notions on the Riemannian symmetric spaces of the noncompact type and their harmonic analysis. For more information on this subject, we refer the reader to the books \cite{He2, He3, GV88}.

\subsubsection*{Basic structure theory} 
Let $\X$ be a Riemannian symmetric space of the noncompact type. Then $\X=\G/\K$, where $\G$ is a noncompact, connected, semisimple, real Lie group with finite center and $\K$ is a maximal compact subgroup of $\G$. Let $\mathfrak g$ and $\mathfrak k$ ($\subset \mathfrak g$) be the Lie algebras of $\G$ and $\K$, respectively, and let $\mathfrak{g}=\mathfrak{k} \oplus \mathfrak{p}$ be the Cartan decomposition of  $\mathfrak{g}$. 
A Cartan subspace of $\mathfrak{g}$ is a maximal abelian subspace of $\mathfrak{p}$. Fix such 
a subspace $\mathfrak{a}$. Its dimension is called the (real) rank of $\X$.  We denote by $\mathfrak{a}^*$ the (real) dual space of $\mathfrak{a}$ and by $\mathfrak{a}_\C^*$ its complexification. The Killing form of $\mathfrak{g}$ restricts to an inner product on $\mathfrak{a}$. We extend it to $\mathfrak{a}^*$ by duality. The $\C$-bilinear extension of $\inner{\cdot}{\cdot}$ to $\mathfrak{a}_\C^*$ will be indicated by the same symbol. 

The set of (restricted) roots of the pair $(\mathfrak{g},\mathfrak{a})$ is
denoted by $\Sigma$. It consists of all $\beta \in \mathfrak{a}^*$ for
which the vector space $\mathfrak{g}_{\beta} = \{X \in \mathfrak{g}:
\text{$[H,X]=\beta(H)X$  for every $H \in \mathfrak a$}\}$ contains nonzero
elements. 
The dimension $m_{\beta}$ of $\mathfrak{g}_{\beta}$ is called the multiplicity of the root $\beta$. We extend the multiplicities to $\mathfrak{a}^*$ by setting $m_{\beta}=0$ 
if $\beta \in \mathfrak{a}^*$ is not a root. 
Recall that if both $\beta$ and $\beta/2$ are roots, then $m_{\beta/2}$ is even
and $m_{\beta}$ is odd; see e.g. \cite[Ch. X, Ex. F.4., p. 530]{He1}.
We say that a root $\beta$ is unmultipliable 
if $2\beta \notin \Sigma$. 
We fix a set $\Sigma^+$ of positive roots in $\Sigma$. Define
$\mathfrak{a}^*_+=\{\lambda\in \mathfrak{a}^*: \text{$\inner{\lambda}{\beta}>0$ for all $\beta\in\Sigma^+$}\}$.
We respectively denote by $\Sigma_*$ and  $\Sigma^+_*$ the set of unmultipliable roots and unmultipliable positive roots in $\Sigma$. 
Furthermore, we denote by $\rho$ the half-sum of the positive roots, counted with their 
multiplicites: hence
\begin{equation}\label{eq:rho}
\rho=\frac{1}{2}\, \sum_{\beta\in\Sigma_*^+} \big(m_\beta+\frac{m_{\beta/2}}{2}\big)\beta\,.
\end{equation}

The Weyl group $\W$ of the pair $(\mathfrak{g},\mathfrak{a})$ is the finite group of
orthogonal transformations of $\mathfrak{a}$ generated by the reflections in the hyperplanes 
$\ker(\beta)$ with $\beta\in \Sigma$.
The Weyl group action extends to $\frak a^*$ by duality and to $\frak a_\C^*$ by complex linearity.

\subsubsection*{Differential operators}
Let $\mathbb{D}(\X)$ denote the algebra of differential operators on $\X$
which are invariant under the action of $\G$ by left translations.
Then $\mathbb{D}(\X)$ is a commutative algebra which contains the (positive)
Laplacian $\Delta$ of $\X$. Moreover, let $S(\mathfrak{a}_\C)^\W$ be the algebra of 
$\W$-invariant polynomial functions on $\mathfrak{a}_\C^*$. Then there is an isomorphism $\Gamma:\mathbb{D}(\X) \to S(\mathfrak{a}_\C)^\W$ such that $\Gamma(\Delta)(\lambda)=\langle \rho, \rho\rangle- \langle \lambda, \lambda\rangle$ for $\lambda \in \mathfrak{a}^*_\C$. The joint eigenspace $\mathcal E_\lambda(\X)$ for the algebra $\mathbb D(\X)$ is defined by
\begin{equation}\label{eq:El}
\mathcal E_\lambda(\X)=\{f\in C^\infty(X): \text{$Df=\Gamma(D)(\lambda)f$
for all $D \in \mathbb D(\X)$}\}\,. 
\end{equation}
See e.g. \cite[Ch. II, p. 76]{He3}.

The spherical function with the spectral parameter $\lambda\in \mathfrak{a}_\C ^*$ is the unique $K$-invariant function $\varphi_\lambda$ in the joint eigenspace $\mathcal E_\lambda(\X)$ 
satisfying the normalizing condition $\varphi_\lambda(o)=1$, where $o=e\K$ is the base point of $\X$ corresponding to the unit element $e$ of $\G$. It is explicitly given by Harish-Chandra's integral formula. See e.g. \cite[Ch. IV, Proposition 24 and Theorem 4.3]{He2}. 

\subsubsection*{Harish-Chandra's $c$-function}
For $\lambda \in \mathfrak a_\C^*$ and $\beta \in \Sigma$ we shall employ the notation
\begin{equation}\label{eq:lbeta} 
\lambda_\beta=\frac{\inner{\lambda}{\beta}}{\inner{\beta}{\beta}}\,.       
\end{equation}
Let $\beta \in \Sigma_*^+$ and set
\begin{equation}
\label{eq:cbeta} 
c_\beta(\lambda)=\frac{2^{-2\lambda_\beta}\Gamma(2\lambda_\beta)}{\Gamma\big(\lambda_\beta+\frac{m_{\beta/2}}{4}+\frac{1}{2}\big) \Gamma\big(\lambda_\beta+
\frac{1}{2} \big( m_{\beta}+ \frac{m_{\beta/2}}{2}\big)\big)}\,,
\end{equation}
where $\Gamma(t)=\int_0^\infty x^{t-1}e^{-x}\,dx$ is the gamma function.
Harish-Chandra's $c$-function (written in terms of unmultipliable roots) is the function $\cHC$ defined on $\mathfrak{a}_\C^*$ by 
\begin{equation}
\label{eq:c}
\cHC(\lambda)=c^0 \; \prod_{\beta\in\Sigma_*^+} c_\beta(\lambda)\,,
\end{equation}
where $c^0$ is a normalizing constant so that $\cHC(\rho)=1$. 

\subsubsection*{The resolvent of $\Delta$}
Endow
the Euclidean space $\mathfrak{a}^*$ with the Lebesgue measure normalized so that the unit cube has volume $1$. On the Furstenberg boundary $\B=\K/\M$ of $\X$, where $\M$ is the centralizer of $\mathfrak{a}$ in $\K$, we consider the $\K$-invariant measure $db$ normalized so that the volume of $\B$ is equal to $1$.
Let $\X$  be equipped with its (suitably normalized) natural $\G$-invariant Riemannian measure, so that,
by the Plancherel Theorem, the Helgason-Fourier transform
$\mathcal F$ is a unitary equivalence of the Laplacian $\Delta$ on $L^2(\X)$ with the
multiplication operator $M$ on $L^2(\mathfrak{a}^*_+\times B,[\cHC(\bi \lambda)\cHC(-\bi \lambda)]^{-1} \; d\lambda\, db)$ given by
\begin{equation} \label{eq:Mmult}
MF(\lambda,b)=\Gamma(\Delta)(\bi\lambda)F(\lambda,b)=(\inner{\rho}{\rho}+\inner{\lambda}{\lambda})F(\lambda,b)  \qquad ((\lambda,b)\in \mathfrak{a}^*\times B)\,.
\end{equation} 
See \cite[Ch. III, \S 1, no. 2]{He3}.
It follows, in particular, that the spectrum of $\Delta$ is the half-line
$[\rho_\X^2, +\infty[$, where $\rho_\X^2=\inner{\rho}{\rho}$. 
By the Paley-Wiener theorem for $\mathcal{F}$, see e.g. \cite[Ch. III, \S 5]{He3}, for every 
$u \in \C \setminus [\rho_\X^2, +\infty[$ the resolvent of $\Delta$ at $u$ maps $C^\infty_c(\X)$ into $C^\infty(\X)$. 

Recall that for sufficiently regular functions $f_1, f_2:\X \to \C$, the convolution $f_1\times f_2$ is the function on $\X$ defined by $(f_1 \times f_2) \circ \pi= (f_1 \circ \pi) * (f_2 \circ \pi)$. Here $\pi:\G\to \X=\G/\K$ is the natural projection and $*$ denotes the convolution product of functions on $\G$.

The Plancherel formula yields the following explicit expression for the image of $f \in C^\infty_c(\X)$ under the resolvent operator $R(z)=(\Delta-\rho_\X^2-z^2)^{-1}$ of the shifted Laplacian $\Delta - \rho_\X^2$:
\begin{equation}\label{eq:resolvent}
[R(z)f](y)=\int_{\mathfrak{a}^*} \frac{1}{\inner{\lambda}{\lambda}-z^2}\; (f \times \varphi_{\bi\lambda})(y) \; \frac{d\lambda}{\cHC(\bi\lambda)\cHC(-\bi\lambda)}
\qquad (z \in \C^+\,, y\in \X)\,. 
\end{equation}
See \cite[formula (14)]{HP09}. Here and in the following, resolvent equalities as \eqref{eq:resolvent} are given up to non-zero constant multiples.

The convolution $(f\times \varphi_{\bi\lambda})(y)$ can be described in terms of the Helgason-Fourier transform of $f$.
Moreover, for $ f\in C_c^\infty(\X)$, $\lambda \in \mathfrak{a}_\C^*$ and $y=g\cdot o\in \X$, by 
\cite[Ch. III, Lemma 1.2 and proof of Theorem 1.3]{He3}, $f \times \varphi_\lambda$
is the spherical Fourier transform of the $\K$-invariant function $f_y\in C_c^\infty(\X)$
given by
$$f_y(g_1)=\int_\K f(gkg_1 \cdot o) \; dk\,.$$
It follows by the Paley-Wiener Theorem that for every fixed $y\in \X$ the function $(f \times \varphi_{\bi\lambda})(y)$ is a Weyl-group-invariant entire function of $\lambda\in\mathfrak{a}_\C^*$ and there exists a constant $R \geq 0$ (depending on $y$ and on the size of the support of $f$) so that for each
$N \in \mathbb N$
\begin{equation}\label{eq:exptype-conv}
\sup_{\lambda\in\mathfrak{a}_\C^*} e^{-R|{\rm Im} \lambda|}(1+|\lambda|)^N
|(f\times \varphi_{\bi\lambda})(y)| < \infty\,.
\end{equation}

\subsubsection*{Eigenspace representations and convolution operators}
The group $\G$ acts on $\mathcal E_\lambda(\X)$ by left translations:
\begin{equation}\label{eq:eigenrep}
[T_\lambda(g)f](x)=f(g^{-1}x) \qquad (g\in G, \, x\in \X)
\end{equation}
The space $\mathcal E_{\lambda,\G}(\X)$ of $\G$-finite elements
in $\mathcal E_\lambda(\X)$ is a (possibly zero) invariant subspace of
$\mathcal E_\lambda(\X)$. (Recall that $f\in \mathcal E_\lambda(\X)$
is said to be $\G$-finite if the vector space spanned by the left
translates $T_\lambda(g)f$ of $f$ with $g\in \G$ is finite dimensional.)
By definition, $\mathcal E_{w\lambda}(\X)=\mathcal E_{\lambda}(\X)$ and 
$\mathcal E_{w\lambda,\G}(\X)=\mathcal E_{\lambda,\G}(\X)$ for $w\in \W$.

The convolution operator  
\begin{equation}
\label{eq:Rlambda}
\mathcal R_\lambda: \ C^\infty_c(\X) \ni f \to f \times \varphi_\lambda \in C^\infty(\X)
\end{equation}
maps into the eigenspace representation space $\mathcal E_\lambda(\X)$. Its image 
$\mathcal R_\lambda(C^\infty_c(\X))=\{f \times \varphi_\lambda: f \in C^\infty_c(\X)\}$
has been studied in \cite[Theorem 3.2]{HP09} and \cite[Proposition 21]{HPP14}. More precisely, 
$\mathcal R_\lambda(C^\infty_c(\X))$ is a non-zero $T_\lambda$-invariant subspace of $\mathcal E_\lambda(\X)$. Its closure is the unique closed  irreducible subspace $\mathcal E_{(\lambda)}(\X)$ of $\mathcal E_\lambda(\X)$, which is generated by the translates of the spherical function $\varphi_\lambda$. Moreover, the space 
$\mathcal R_\lambda(C^\infty_c(\X))$ is finite dimensional if and only if
$\mathcal E_{\lambda,\G}(\X)\neq \{0\}$ is a finite dimensional spherical representation. 
This means that there is some $w \in \W$ so that $w\lambda-\rho$ is a highest restricted weight. (Recall that an element $\mu \in \mathfrak{a}^*$ is a highest restricted weight if 
$\mu_\alpha \in \Zb_{\geq 0}$ for all $\alpha\in \Sigma^+$.)
In this case, $\mathcal R_\lambda(C^\infty_c(\X))=\mathcal E_{\lambda,\G}(\X)$ is the finite dimensional spherical representation of highest restricted weight $w\lambda-\rho$ for some $w \in \W$. 

\subsection{The rank one case}
\label{subsection:rankone}

The space $\X$ is of rank one when $\a^*$ is one dimensional. In this case, the set $\Sigma^+$ of positive roots consists of at most two elements: $\beta$ and (possibly) $\beta/2$.
The Weyl group $\W$ is $\{\pm \id\}$. 

 Recall the notation $\lambda_\beta=\frac{\langle \lambda,\beta\rangle}{\langle\beta ,\beta\rangle}$.
If we view the real vector space $\a^*$ as a real manifold then the map
\[
\a^*\ni \lambda\to \lambda_\beta\in \R
\]
is a chart with the inverse given by
\begin{equation}\label{1.1}
\R\ni x\to x\beta\in \a^*.
\end{equation}
Also, the Lebesgue measure becomes
\begin{equation}\label{1.2}
d(x\beta)=b\,dx \qquad (x\in \R),
\end{equation}
where
\[
b=\sqrt{\langle\beta ,\beta\rangle}.
\]
Moreover, formula \eqref{eq:rho} becomes $\rho=\rho_\beta \beta$, where
\[
\rho_\beta=\frac{\langle \rho,\beta\rangle}{\langle \beta,\beta\rangle}=\frac{1}{2}\left(m_\beta+\frac{m_{\beta/2}}{2}\right)\,.
\]
Notice that $2\rho_\beta$ is a positive integer. Moreover,
$\rho_{\X}^2=\langle \rho,\rho\rangle=b^2 \rho_\beta^2$ is the bottom of the spectrum of the Laplacian $\Delta$ of $\X$.

\begin{rem} \label{rem:classification}
The following table gives the multiplicity data of the irreducible connected Riemannian symmetric spaces
of rank-one $\G/\K$. In the table, the symbol $\H_0$ denotes the connected component of the identity in a given group $\H$.

\smallskip
\begin{center}
\begin{tabular}{|l|l|c|c|c|l|}
\hline
$\G$ & $\K$ & $\Sigma^+$ & $m_{\beta/2}$ & $m_\beta$ & $\rho_\beta$ \\
\hline
$\SO_0(2n+1,1)$, $n \geq 1$ & $\SO(2n+1)$ & $\{\beta\}$ & $0$ & $2n$ & $n$ \\
\hline
$\SO_0(2n,1)$, $n \geq 1$ & $\SO(2n)$ & $\{\beta\}$ & $0$ & $2n-1$ & $n\,-\,1/2$ \\
\hline
$\SU(n,1)$, $n \geq 2$ & $\Sg(\Ug(n)\times \Ug(1))$ & $\{\beta/2,\beta\}$ & $2(n-1)$ & 1 & $n/2$\\
\hline
$\Sp(n,1)$, $n \geq 2$ & $\Sp(n)\times \Sp(1)$ & $\{\beta/2,\beta\}$ & $4(n-1)$ & 3 & $n\,+\,1/2$ \\
\hline
$\Fg_{4(-20)}$ & $\Spin(9)$       & $\{\beta/2,\beta\}$ & $8$ & $7$ & $11/2$\\
\hline
\end{tabular}
\end{center}
\end{rem}
\bigskip

In the rank-one case, Harish-Chandra's $c$ function reduces to a constant multiple of the function $c_\beta$, see \eqref{eq:cbeta}. The Plancherel density can be rewritten as a polynomial multiple of a hyperbolic function by means of the classical formulas for the Gamma function. Indeed, if $m_\beta$ is even define
\[
P(x)=\prod_{k=0}^{2(\rho_\beta-1)}\big(x-(\rho_\beta-1)+k\big) \qquad
\text{and} \qquad Q(x)=1.
\]
If $m_\beta$ is odd, then $\frac{m_\beta+1}{2}\leq 2\rho_\beta$ and we define
\begin{eqnarray*}
P(x) &=&\prod_{k=0}^{2\rho_\beta-2}\big(x-(\rho_\beta-1)+k\big)
\prod_{k=0}^{\frac{m_{\beta/2}}{2}-1} \big(x-(\frac{m_{\beta/2}}{4}-\frac{1}{2}\big)+k\big)\\
&=&\prod_{k=\frac{m_\beta+1}{2}}^{2\rho_\beta-\frac{m_\beta+1}{2}}(x-\rho_\beta+k)\prod_{k=1}^{2\rho_\beta-1}(x-\rho_\beta+k)
\end{eqnarray*}
and
\[
Q(x)=\cot(\pi(x-\rho_\beta)).
\]
Notice that if $Q\ne 1$ then the product  $PQ$ is singular at
\[
i(\rho_\beta+\Zb) \setminus\{-i(\rho_\beta-1), -i(\rho_\beta-2),\dots, i(\rho_\beta-2), i(\rho_\beta-1)\}.
\]
The Plancherel density becomes
\begin{equation}
\label{eq:Plancherel-density}
\frac{1}{\cHC(\bi\lambda)\cHC(-\bi\lambda)}=c_0\,\lambda_\beta\,P(i\lambda_\beta)\,Q(i\lambda_\beta) \qquad (\lambda\in\a_\C^*).
\end{equation}
where $c_0$ is a constant depending on the root multiplicities.

To simplify the notation in the later parts of the paper, we introduce the functions 
\begin{equation}
\label{eq:pq}
 p(x)=P\big(i\tfrac{x}{b}\big)\qquad \text{and} \qquad q(x)=Q\big(i\tfrac{x}{b}\big).
\end{equation}

\begin{rem}
\label{rem:parity-pq}
The function $q$ is odd if $m_\beta$ is odd because $2\rho_\beta \in \Zb$ and the cotangent is odd and $\pi$-periodic. It is obviously even when $m_\beta$ is even. Moreover, $xp(x)q(x)$ is even since, by (\ref{eq:Plancherel-density}), it is a constant multiple of $[\cHC(\bi\lambda)\cHC(-\bi\lambda)]^{-1}$ where $\lambda_x=\frac{x}{b}\,\beta$.
It follows that the polynomial $p$ is odd if $m_\beta$ is even, and even if $m_\beta$ is odd.
Of course this can also be seen directly. Indeed, suppose $m_\beta$ is odd. Then the first factor of $P$ is odd if and only if
$2\rho_\beta-m_\beta-1\in \Bbb Z_{\geq 0}$, i.e.  $\frac{m_{\beta/2}}{2} \in 2\Zb+1$, whereas the second factor is odd if and only if $\rho_\beta \in \Zb$, i.e. $m_\beta+\frac{m_{\beta}}{2} \in 2\Zb$. Thus the two factors are either both odd (and this happens if $\frac{m_{\beta/2}}{2} \in 2\Zb+1$) or both even (if $\frac{m_{\beta/2}}{2} \in 2\Zb$). In any case, $P$ is even when $m_\beta$ is odd. When $m_\beta$ is even, then $\rho_\beta \in \Zb$, so $P$ is odd.
\end{rem}

The following theorem was proven in \cite{HP09}, see also \cite{MW00}. Here we employ the notation $\beta/2$, $\beta$ instead of $\alpha$, $2\alpha$ for the elements of $\Sigma^+$. This choice allows us to unify the resulting formulas. (In particular, there is no need of distinguishing, as in \cite{HP09}, between the cases $m_{2\alpha}=0$ and $m_{2\alpha}\neq 0$ when $m_\alpha$ is odd.) When $m_\beta$ is even (and
hence $\beta/2$ is not a root), we are in a case of even multiplicities and odd rank, in which $R(z)$ admits holomorphic extension to the entire complex plane. We can therefore restrict ourselves to the case where $m_\beta$ is odd. 

\begin{thm}
Suppose $\X$ has rank one and $m_\beta$ is odd. Then the resolvent $R(z)$ admits meromorphic extension from $\C^+$ to the entire complex plane $\C$, with simple poles at the points 
\begin{equation}
\label{eq:poles}
z_k=-i(\rho_\beta+k)b
 \qquad (k\in \Zb_{\geq 0})\,.
\end{equation}
The (resolvent) residue operator
$R_k: C^\infty_c(\X) \to C^\infty(\X)$
given by 
\[
[R_k(f)](y)={\Res}_{z=z_k} [R(z)f](y)
\]
has image 
\[
R_k\big(C^\infty_c(\X)\big)=\{ f\times \varphi_{(\rho_\beta+k)\beta}: f\in C^\infty_c(\X)\}\,,
\]
where $\varphi_{(\rho_\beta+k)\beta}$ is the spherical function of spherical parameter 
$(\rho_\beta+k)\beta$. Endowed with the action of group $\G$ by left translations, the image $R_k\big(C^\infty_c(\X)\big)$ is the finite dimensional spherical representation of $\G$ of highest restricted weight $k\beta$. 
\end{thm}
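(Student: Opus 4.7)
My plan is to derive the meromorphic extension directly from the integral representation \eqref{eq:resolvent} via a one-dimensional contour shift, and then read off the residues. First, using the rank-one chart $\lambda = x\beta$ and the factorization \eqref{eq:Plancherel-density}, I rewrite \eqref{eq:resolvent}, up to nonzero multiplicative constants, as
\[
[R(z)f](y) = \int_{\R} \frac{\Phi(x,y)\, h(x)}{b^2 x^2 - z^2}\, dx,
\]
with $\Phi(x,y) = (f\times\varphi_{\bi x\beta})(y)$ and $h(x) = x\, P(ix)\, Q(ix)$. By Weyl invariance, $\Phi(\cdot,y)$ is entire and even, and by Remark \ref{rem:parity-pq} the function $h$ is even. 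The integrand is meromorphic in $x$ with simple poles exactly at the moving points $x = \pm z/b$ and at the fixed points $x = \pm i(\rho_\beta + k)$ for $k \in \Zb_{\geq 0}$, which exhaust the uncancelled singular set of the Plancherel density on the imaginary axis.

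For fixed $z \in \C^+$ and generic $T > 0$, I would shift the contour from $\R$ down to $\R - iT$. The Paley--Wiener bound \eqref{eq:exptype-conv} for $\Phi(\cdot,y)$ dominates the polynomial growth of $h$ on horizontal lines, so the vertical closing segments vanish at infinity and Cauchy's theorem gives
\[
[R(z)f](y) = \int_{\R - iT} \frac{\Phi(x,y)\, h(x)}{b^2 x^2 - z^2}\, dx - 2\pi\bi \Res_{x = -z/b} - 2\pi\bi \sum_{k\,:\,\rho_\beta + k < T} \Res_{x = -i(\rho_\beta+k)}.
\]
The shifted integral is holomorphic in $z$ on the strip $|\Im z| < bT$, and each residue term is meromorphic in $z$. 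Taking $T \to \infty$ along a sequence avoiding the values $\rho_\beta + k$ and gluing by analytic continuation extends $[R(z)f](y)$ meromorphically to all of $\C$. The candidate simple poles at $z = \pm ib(\rho_\beta+k)$ arise both from $\Res_{x=-z/b}$ (via the singularities of $h(-z/b)$ at $-z/b = \pm i(\rho_\beta+k)$) and from $\Res_{x=-i(\rho_\beta+k)}$ (via the vanishing of the denominator $b^2 x^2 - z^2$).

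The decisive step --- and the part I expect to be most delicate --- is the verification of the asymmetric cancellation at these candidate poles. A direct residue computation shows that at $z = \pm ib(\rho_\beta + k)$ the two contributions to the pole of $[R(z)f](y)$ are proportional, respectively, to $P(\rho_\beta + k)(f\times\varphi_{(\rho_\beta+k)\beta})(y)$ (coming from $\Res_{x = -i(\rho_\beta+k)}$) and to $P(\mp(\rho_\beta+k))(f\times\varphi_{(\rho_\beta+k)\beta})(y)$ (coming from $\Res_{x = -z/b}$, the sign in the argument of $P$ stemming from evaluating $P(-iz/b)$ at $z = \pm ib(\rho_\beta + k)$). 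The relative signs are such that the evenness of $P$ when $m_\beta$ is odd (Remark \ref{rem:parity-pq}) produces cancellation at $z = +ib(\rho_\beta+k)$ --- consistent with, and in fact forced by, the holomorphy of $R(z)$ on $\C^+$ --- and reinforcement at $z_k = -ib(\rho_\beta+k)$. Inspecting the zero set of $P$ shows $P(\rho_\beta + k) \neq 0$ for every $k \in \Zb_{\geq 0}$, so the surviving simple pole has residue a nonzero constant multiple of $(f\times\varphi_{(\rho_\beta+k)\beta})(y)$, giving $R_k(C_c^\infty(\X)) = \{f\times\varphi_{(\rho_\beta+k)\beta} : f \in C_c^\infty(\X)\}$.

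The representation-theoretic conclusion is then immediate from the general discussion at the end of subsection \ref{subsection:X}. With $\lambda = (\rho_\beta + k)\beta$, one has $\lambda - \rho = k\beta$; the coefficients $(k\beta)_\beta = k$ and, when $\beta/2 \in \Sigma$, $(k\beta)_{\beta/2} = 2k$ are nonnegative integers, so $k\beta$ is a highest restricted weight for each $k \geq 0$. Therefore $R_k(C_c^\infty(\X))$ coincides with $\mathcal E_{(\rho_\beta+k)\beta,\G}(\X)$, the finite-dimensional spherical $\G$-representation of highest restricted weight $k\beta$ under left translations.
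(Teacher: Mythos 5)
The paper's own proof of this theorem is simply a citation to \cite[Theorem~3.8]{HP09} plus a one-line check that $(\rho_\beta+k)\beta-\rho=k\beta$ is a highest restricted weight. You instead give a self-contained argument by shifting the $x$-contour from $\R$ to $\R-iT$ and collecting residues, which is the natural rank-one method and closely parallels the two-dimensional contour deformation the paper itself develops for the product case in sections~\ref{section:contour}--\ref{section:meroext-odd}. The proof is essentially correct: the Paley--Wiener bound justifying the shift, the even parity of $P$ and of $\Phi(\cdot,y)$ when $m_\beta$ is odd, the positivity $P(\rho_\beta+k)>0$ for $k\in\Zb_{\geq 0}$, and the highest-restricted-weight identification all go through as stated, and the final representation-theoretic conclusion matches the paper's note. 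One small sign slip worth correcting: evaluating $P(-iz/b)$ at $z=\pm ib(\rho_\beta+k)$ gives $P\bigl(\pm(\rho_\beta+k)\bigr)$, not $P\bigl(\mp(\rho_\beta+k)\bigr)$. Since $P$ is even this does not change the outcome, but it does mean the cancellation at $z=+ib(\rho_\beta+k)$ and the reinforcement at $z_k=-ib(\rho_\beta+k)$ are governed not by a sign flip inside $P$ but by the explicit $\pm i$ prefactors in the two residue contributions, namely the residue of $Q(ix)$ at $x=-i(\rho_\beta+k)$ and the partial-fraction factors from $b^2x^2-z^2=(bx-z)(bx+z)$; tracking these explicitly (the rank-two analogue is the computation in Lemma~\ref{lem:chart-expressions}) turns the ``asymmetric cancellation'' into an honest calculation rather than a consistency check against holomorphy of $R$ on $\C^+$.
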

\begin{proof}
This is \cite[Theorem 3.8]{HP09}. Notice that for $k\in \Zb_{\geq 0}$, the element $(\rho_\beta+k)\beta-\rho$ is a highest restricted weight as 
$\big((\rho_\beta+k)\beta-\rho\big)_\beta=(\rho_\beta+k)\beta_\beta-\rho_\beta=k\in \Zb_{\geq 0}$.
(This implies the integrality condition for $\beta/2$ as well, since $\mu_{\beta/2}=2\mu_\beta$ for $\mu\in \mathfrak{a}^*$.) 
\end{proof}


\subsection{The functions $\cz$ and $\sz$}
For $r>0$ and $a,b \in \R\setminus \{0\}$ set
\begin{eqnarray*}
\Dg_r&=&\{z\in \C;\ |z|<r\}\,,\\
\Eg_{a,b}&=&\big\{\xi+i\eta\in \C;\ \left(\tfrac{\xi}{a}\right)^2+ \left(\tfrac{\eta}{b}\right)^2<1\big\}\,.
\end{eqnarray*}
Their boundaries $\partial \Dg_r$ and $\partial \Eg_{a,b}$ are respectively the circle of radius $r$ and the ellipse of semi-axes $|a|,|b|$, both centered at $0$. In particular, $\partial \Dg_1$ is the unit circle $\Sg^1=\{z\in \C:|z|=1\}$.
Moreover, the closure $\overline{\Dg_r}$ of $\Dg_r$ is the closed disc of center $0$ and radius $r$.

For $z \in \C^\times$ define
\begin{equation}\label{def-of-c-and-s}
\cz(z)=\frac{z+z^{-1}}{2}\quad \text{and} \quad
\sz(z)=\frac{z-z^{-1}}{2}=i\cz(-iz)\,.
\end{equation}
Then $\cz:\Dg_1\setminus \{0\} \to \C\setminus [-1,1]$ is a biholomorphic map.
For $0<r<1$, it restricts to a biholomorphic function
\begin{equation}\label{first bijection}
\cz:\Dg_1\setminus\overline{\Dg_r}\to  \Eg_{\cz(r), \sz(r)}\setminus [-1,1]
\end{equation}
and to a bijection
\begin{equation}\label{second bijection}
\cz:\partial \Dg_r\to \partial  \Eg_{\cz(r), \sz(r)}.
\end{equation}
Let $\cz^{-1}:\C\setminus [-1,1] \to \Dg_1\setminus \{0\}$ be the inverse of the function $c$.
We will need the following lemma, proved in \cite[Lemma 7]{HPP14}.
\begin{lem}\label{rays and circles}
Let $|\zeta|=|\zeta_0|=1$. Then
\[
\left(\zeta \cz^{-1}(\zeta_0 \R\setminus[-1,1])\right)\cap \left(\cz^{-1}(\zeta_0 \R\setminus[-1,1])\right)\ne\emptyset
\]
if and only if $\zeta=\pm 1$.
\end{lem}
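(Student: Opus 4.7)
\medskip

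\noindent\emph{Proof sketch.} The plan is to dispatch the forward (easy) direction by direct symmetry arguments and to treat the converse by a short algebraic manipulation followed by a modulus computation.

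For the forward direction, the case $\zeta=1$ is trivial since the two sets coincide. For $\zeta=-1$, the definition \eqref{def-of-c-and-s} gives $\cz(-z)=-\cz(z)$, so the set $\cz^{-1}(\zeta_0\R\setminus[-1,1])$ is stable under $z\mapsto -z$ (because $-(\zeta_0\R\setminus[-1,1])=\zeta_0\R\setminus[-1,1]$), hence the two sets in the statement coincide in this case as well. In particular the intersection is non-empty.

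For the converse, assume $\zeta\in\Sg^1$ and that there exists $w\in\Dg_1\setminus\{0\}$ with both $w$ and $\zeta w$ belonging to $\cz^{-1}(\zeta_0\R\setminus[-1,1])$. Since $0\in[-1,1]$, both $\cz(w)$ and $\cz(\zeta w)$ are non-zero, so the ratio
\[
\lambda=\frac{\cz(\zeta w)}{\cz(w)}
\]
is a well-defined non-zero real number (both numerator and denominator lie on the real line through $0$ spanned by $\zeta_0$). Writing out \eqref{def-of-c-and-s} and using $\zeta^{-1}=\bar\zeta$, the equation $\cz(\zeta w)=\lambda\,\cz(w)$ becomes
\[
\zeta w+\bar\zeta w^{-1}=\lambda(w+w^{-1}),
\]
which after clearing $w^{-1}$ rearranges to
\[
(\zeta-\lambda)\,w^{2}=\lambda-\bar\zeta.
\]

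The main (and really only) step is the modulus identity: since $\lambda\in\R$, one has $\overline{\lambda-\bar\zeta}=\lambda-\zeta=-(\zeta-\lambda)$, so
\[
|\lambda-\bar\zeta|=|\zeta-\lambda|.
\]
If $\zeta-\lambda\ne 0$, this forces $|w|^2=1$, contradicting $w\in\Dg_1\setminus\{0\}$. Thus we must have $\zeta=\lambda\in\R$, and together with $|\zeta|=1$ this yields $\zeta=\pm 1$, completing the proof.

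I do not anticipate any serious obstacle: once the ratio $\lambda$ is recognised as real and the complex conjugation identity $\overline{\lambda-\bar\zeta}=-(\zeta-\lambda)$ is applied, the contradiction with $|w|<1$ is immediate. The only minor point to double-check is that $\cz(w)\ne 0$ on the locus in question, which is automatic because $0\in[-1,1]$ is excluded from the image.
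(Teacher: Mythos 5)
Your argument is correct, and the paper itself does not supply a proof — it simply cites \cite[Lemma 7]{HPP14} — so there is nothing internal to compare against. Your reduction of the non-empty intersection to the existence of $w\in\Dg_1\setminus\{0\}$ with $w$ and $\zeta w$ both in $\cz^{-1}(\zeta_0\R\setminus[-1,1])$, the observation that $\lambda=\cz(\zeta w)/\cz(w)$ is a non-zero real (because both lie on the punctured line $\zeta_0\R\setminus[-1,1]$), the rearrangement to $(\zeta-\lambda)w^2=\lambda-\bar\zeta$, and the modulus identity $|\lambda-\bar\zeta|=|\zeta-\lambda|$ forcing $|w|=1$ unless $\zeta=\lambda\in\R\cap\Sg^1=\{\pm1\}$, are all sound; the forward direction via $\cz(-w)=-\cz(w)$ and the symmetry of $\zeta_0\R\setminus[-1,1]$ under $z\mapsto -z$ is likewise fine.
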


Let $\sqrt{\cdot}$ denote the single-valued holomorphic branch of the square root function defined on $\C \setminus (-\infty,0]$ by
\begin{equation}\label{square root 1}
\sqrt{Re^{i\Theta}}=\sqrt{R}\,e^{\frac{i\Theta}{2}} \qquad (R>0, \,-\pi<\Theta<\pi).
\end{equation}
Then the function $\sqrt{z+1}\sqrt{z-1}$, originally defined on $\C\setminus (-\infty, 1]$, extends to a holomorphic function on $\C\setminus [-1,1]$ satisfying
\begin{equation}
\label{eq:sign-sqrt-sqrt}
\sqrt{(-z)+1}\sqrt{(-z)-1}=-\sqrt{z+1}\sqrt{z-1}\,.
\end{equation}
See e.g. \cite[Lemma 5]{HPP14}.
For $z\in\C\setminus [-1,1]$, we have
by \cite[Lemma 6]{HPP14}:
\begin{eqnarray}
\label{eq:cinverse}
&&\cz^{-1}(z)=z-\sqrt{z+1}\sqrt{z-1},\\
\label{eq:scinverse}
&&\sz\circ \cz^{-1}(z)=-\sqrt{z+1}\sqrt{z-1}.
\end{eqnarray}
Consider the Riemann surface
\begin{equation}\label{preimageofzero}
\M=\{(w,\zeta)\in \C^2, \zeta^2=w^2-1\}
\end{equation}
above $\C$, with holomorphic projection map
\begin{equation}\label{coveringMtoC}
\pi: \M\ni (w,\zeta)\to w\in \C\,.
\end{equation}
The fibers of $\pi$ consist of two points $(w,\zeta)$ and $(w,-\zeta)$ if $w\ne \pm 1$. If $w=\pm 1$, then the fibers consist of one point $(w,0)$.

Let $\Sg\subseteq \C$ and let $\wt \Sg\subseteq \M$ be the preimage of $\Sg$ in $\M$ under the map $\pi$ given by (\ref{coveringMtoC}). We say that a function $\wt f: \wt \Sg\to \C$ is a lift of $f:\Sg\to \C$ if there is a holomorphic section $\sigma: \Sg \to \wt \Sg$ of the restriction of
$\pi$ to $\wt \Sg$ so that
\[
\wt f(w,\zeta)=f(w) \qquad ((w,\zeta)\in \sigma(\Sg))\,.
\]
Let $\sigma^+:\C\setminus [-1,1] \ni w \to (w,\zeta^+(w)) \in \M$ be the holomorphic section of $\pi$ defined by $\zeta^+(w)=\sqrt{w+1}\sqrt{w-1}$.
Because of \eqref{eq:cinverse} and \eqref{eq:scinverse}, the functions
\begin{eqnarray}
\label{eq:lift of c}
&&(\cz^{-1})\,\wt{}:\M\ni (w,\zeta)\to w-\zeta\in \C\,,\\
\label{eq:lift of sc}
&&(\sz\circ \cz^{-1})\,\wt{}:\M\ni (w,\zeta)\to -\zeta\in \C
\end{eqnarray}
are holomorphic extensions to $\M$ of the lifts of $\cz^{-1}$ and
$\sz\circ \cz^{-1}$ for $\sigma^+$, respectively.


\section{Direct products: extension away from the negative imaginary axis }
\label{section:holo-fct}

From now on 
$\X= \X_1\times \X_2$ where $\X_1$ and $\X_2$ are Riemannian symmetric spaces of the noncompact type and of rank one. 
To distinguish the objects associated with the two spaces, we will add the indices $\null_1$ and $\null_2$ to the notation introduced above. (However, we will write $\a_{1,\C}$ rather than ${\a_1}_\C$ or $(\a_1)_\C$.)  Hence, we have
\begin{eqnarray*}
&&\a^*=\a^*_1\oplus \a^*_2,\ \ \ \langle\cdot,\cdot\rangle=\langle\cdot,\cdot\rangle_1\oplus\langle\cdot,\cdot\rangle_2,\ \ \  \Delta=\Delta_1\otimes \id + \id\otimes \Delta_2,\\
&&\cHC(\lambda)=c_{\rm{HC},1}(\lambda_1)c_{\rm{HC},2}(\lambda_2)\qquad (\lambda_1\in \a_{1,\C}^*,\, \lambda_2\in \a_{2,\C}^*, \,\lambda=\lambda_1+\lambda_2)\,,\\
&&\rho=\rho_1+\rho_2=\rho_{\beta_1}\beta_1+\rho_{\beta_2}\beta_2,\ \ \ b_1=\sqrt{\langle \beta_1,\beta_1\rangle},\ \ \ b_2=\sqrt{\langle \beta_2,\beta_2\rangle}
\end{eqnarray*}
and the Harish-Chandra spherical function
\begin{multline*}
\varphi_\lambda(y)=\varphi_{1,\lambda_1}(y_1)\varphi_{2,\lambda_2}(y_2)\\
(\lambda_1\in \a_{1,\C}^*, \,\lambda_2\in \a_{2,\C}^*, \, \lambda=\lambda_1+\lambda_2,\, y_1\in\X_1, \, y_2\in\X_2, \,y=(y_1,y_2))
\end{multline*}
Then $\rho_\X^2=\langle \rho,\rho\rangle=\langle \rho_1,\rho_1\rangle+ \langle \rho_2,\rho_2\rangle= b_1^2\rho_{\beta_1}^2+b_2^2\rho_{\beta_2}^2$ is the bottom of the spectrum of $\Delta$.

As in the case of $\mathrm{SL}(3,\R)/\mathrm{SO}(3)$, treated in \cite{HPP14}, it will be convenient to identify $\mathfrak{a}^*$ with $\C$ as vector spaces over $\R$. More precisely, 
we want to view $\mathfrak{a}_1^*$ and $\mathfrak{a}_2^*$ as the real and the purely imaginary axes, respectively. To distinguish the resulting complex structure in $\mathfrak{a}^*$ from the natural complex structure of $\mathfrak{a}^*_\C$,
we shall indicate the complex units in $\mathfrak{a}^*\equiv \C$ and $\mathfrak{a}_\C^*$ by $i$ and $\bi$, respectively.
So $\mathfrak{a}^*\equiv \C=\R +i \R$, whereas $\mathfrak{a}_\C^*=\mathfrak{a}^*+\bi\mathfrak{a}^*$.
For $r,s\in \R$ and $\lambda,\nu\in \mathfrak{a}^*$ we have $(r+is)(\lambda+\bi\nu)=(r\lambda-s\nu)+\bi(r\nu+s\lambda)\in \mathfrak{a}^*_\C$.


\subsection{The resolvent kernel}
\label{subsection:resolvent kernel}

Introduce the coordinates \eqref{1.1} on each component of the real vector space $\a^*=\a_1^*\oplus \a_2^*$. Let $f\in C_c^\infty(\X)$, $y\in \X$ and $z\in \C^+$.
Using \eqref{eq:Plancherel-density} and omitting non-zero constant multiples, we can rewrite \eqref{eq:resolvent} as 
\begin{multline*}
R(z)f(y)=
\int_{\a^*}\frac{1}{\langle \lambda,\lambda\rangle-z^2}(f\times \varphi_{\bi\lambda})(y)\frac{1}{\cHC(\bi\lambda)\cHC(-\bi\lambda)}\,d\lambda\\
=\int_{\R^2}\frac{1}{x_1^2b_1^2+x_2^2b_2^2-z^2}(f\times \varphi_{\bi x_1\beta_1+\bi x_2\beta_2})(y)x_1x_2P_1(ix_1)P_2(ix_2)Q_1(ix_1)Q_2(ix_2)\,dx_1\,dx_2\,.
\end{multline*}

The $\langle\cdot ,\cdot \rangle$-spherical coordinates on $\a^*$ become elliptical coordinates on $\R^2$ via the substitution
\[
x_1=\frac{r}{b_1}\cos \theta,\ \ \ x_2=\frac{r}{b_2}\sin \theta \qquad (0<r\,,\ 0\leq \theta<2\pi).
\]
In these terms (up to a non-zero constant multiple)
\[
R(z)f(y)=\int_0^\infty\frac{1}{r^2-z^2}F(r)\,r\,dr,
\]
where
\begin{eqnarray}\label{2.1}
F(r)&=&\int_0^{2\pi}(f\times \varphi_{\bi\frac{r}{b_1}\cos \theta\,\beta_1+\bi\frac{r}{b_2}\sin \theta\,\beta_2})(y)\,r^2\cos\theta \sin\theta\\
&&\times \; p_1(r\cos \theta) q_1(r\cos \theta) p_2(r\sin \theta) q_2(r\sin \theta)\,d\theta\,.\nn
\end{eqnarray}
Here and in the following, we omit from the notation the dependence of $F$ on the function $f\in C^\infty_c(\X)$ and on $y\in \X$.

Recall the functions, \eqref{def-of-c-and-s},
\[
\cz(w)=\frac{w+w^{-1}}{2}\,\qquad  \sz(w)=\frac{w-w^{-1}}{2}=i\cz(-iw) \qquad (w\in\C^\times)
\]
and notice that
\[
\cos\theta=\cz(e^{i\theta})\,, \qquad \sin\theta=\frac{\sz(e^{i\theta})}{i}=\cz(-ie^{i\theta})\,,\qquad d\theta=\frac{d e^{i\theta}}{ie^{i\theta}}.
\]
For $z \in \C$ and $w \in \C^\times$ define
\begin{eqnarray}
\label{eq:psiz}
\psi_z(w)&=&(f\times \varphi_{\bi\frac{z}{b_1}\cz(w)\,\beta_1+\bi\frac{z}{b_2}\cz(-iw)\,\beta_2})(y)\\
\label{eq:phiz}
\phi_z(w)&=&-z^2\cz(w) \frac{\sz(w)}{w} p_1\big(z\cz(w)\big) q_1\big(z\cz(w)\big) p_2\big(z\cz(-iw)\big) q_2\big(z \cz(-iw)\big).
\end{eqnarray}
Then
\begin{equation}\label{F(r)}
F(r)=\int_{|w|=1}\psi_r(w)\phi_r(w)\,dw\,.
\end{equation}
Notice that  $\cz(-w)=-\cz(w)$. Hence, by Remark \ref{rem:parity-pq} and the Weyl group invariance of the spherical functions with respect to the spectral parameter, we have
\begin{eqnarray}
\label{eq:parity-psi}
\psi_{-z}(w)=\psi_{z}(w) \qquad &&\psi_{z}(-w)=\psi_{z}(w)\,,\\
\label{eq:parity-phi}
\phi_{-z}(w)= \phi_{z}(w) \qquad &&\phi_{z}(-w)= -\phi_{z}(w)\,.
\end{eqnarray}

For $j=1,2$ let
\begin{equation}\label{eq:Lj}
L_{j,0}=\begin{cases}
b_j\rho_{\beta_j}=\sqrt{\langle \rho_j,\rho_j\rangle} &\text{if $m_{\beta_j}\in 2\Bbb Z+1$},\\
+\infty  &\text{if $m_{\beta_j}\in 2\Bbb Z$}
\end{cases}
\end{equation}
and let
\begin{equation}
\label{eq:Sj}
S_j=
\begin{cases}
ib_j((\rho_{\beta_j}+\Bbb Z_{\geq 0})\cup(-\rho_{\beta_j}-\Bbb Z_{\geq 0})) &\text{if $m_{\beta_j}\in 2\Bbb Z+1$},\\
\emptyset &\text{if $m_{\beta_j}\in 2\Bbb Z$}.
\end{cases}
\end{equation}
(The index $0$ in \eqref{eq:Lj} will play a role later in this paper, where $L_{j,0}$ will be the first element of an infinite series $L_{j,\ell}$. See \eqref{eq:Ljl}.)

\subsection{Holomorphic extension}
\label{section:holo-ext}
We start our extension procedure with a two step holomorphic extension of $R(z)$ to a logarithmic Riemann surface branched along $(-\infty,0]$.

\begin{lem}
\label{lemma:holoextF}
The function $F(r)$, \eqref{F(r)}, extends holomorphically to
\begin{eqnarray}\label{2.2}
F(z)&=&\int_{|w|=1}\psi_z(w)\phi_z(w)\,dw\,,
\end{eqnarray}
where
\[
z\in \C\setminus i((-\infty,- L]\cup[L, +\infty)),\ \ \ L=\min\{L_{1,0}, L_{2,0}\}.
\]
The function $F(z)$ is even and $F(z)z^{-2}$ is bounded near $z=0$.
\end{lem}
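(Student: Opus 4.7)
The strategy is to take \eqref{2.2} as a definition of $F$ on $U:=\C\setminus i((-\infty,-L]\cup[L,+\infty))$ and to verify that the integrand is jointly holomorphic in $(z,w)$ on a neighborhood of $U\times\partial\Dg_1$, so that integration over the compact unit circle produces a holomorphic function of $z$ that specializes to \eqref{2.1} at $z=r>0$.

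The first step, which I expect to be the main obstacle, is the geometric analysis of the singular set of $\phi_z(w)$ restricted to the unit circle. The factor $\psi_z(w)$ is jointly entire on $\C\times\C^\times$ by Paley--Wiener \eqref{eq:exptype-conv}, since the spectral parameter depends holomorphically on $(z,w)$; so only $\phi_z(w)$ in \eqref{eq:phiz} can contribute singularities, through $q_1(z\cz(w))$ and $q_2(z\cz(-iw))$. By \eqref{eq:Plancherel-density} and \eqref{eq:Sj} the products $p_jq_j$ are meromorphic on $\C$ with singular sets $S_j\subset i\R\setminus\{0\}$, all of whose elements have modulus at least $L_{j,0}$. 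For $w=e^{i\theta}\in\partial\Dg_1$ both $\cz(w)=\cos\theta$ and $\cz(-iw)=\sin\theta$ are real, so $z\cz(w)$ and $z\cz(-iw)$ lie in the segment $z\cdot[-1,1]\subset\C$. This segment meets $i\R\setminus\{0\}$ only when $z\in i\R$, and in that case $|z\cz(w)|,|z\cz(-iw)|\leq|z|$, so the bad values are reached only when $|z|\geq L_{j,0}$, which over both $j$ forces $|z|\geq L$. Therefore on $U\times\partial\Dg_1$ the integrand is jointly continuous and holomorphic in $z$ for each $w$, and Morera's theorem combined with Fubini (or direct differentiation under the integral sign) yields that \eqref{2.2} defines a holomorphic function on $U$. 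Its agreement with \eqref{2.1} at $z=r>0$ is precisely the substitution $w=e^{i\theta}$ used to derive \eqref{F(r)}.

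Evenness follows at once from the parities \eqref{eq:parity-psi} and \eqref{eq:parity-phi}, which give $\psi_{-z}(w)\phi_{-z}(w)=\psi_z(w)\phi_z(w)$, hence $F(-z)=F(z)$. For the boundedness of $F(z)/z^2$ near $0$, I would extract the explicit factor $-z^2$ from \eqref{eq:phiz}; the remaining factor
\[
\cz(w)\,\frac{\sz(w)}{w}\,p_1(z\cz(w))\,q_1(z\cz(w))\,p_2(z\cz(-iw))\,q_2(z\cz(-iw))
\]
is jointly holomorphic on a neighborhood of $\{0\}\times\partial\Dg_1$, because each $p_jq_j$ is holomorphic in a neighborhood of the origin: its singular set $S_j$ does not contain $0$, since $\rho_{\beta_j}>0$ (even if $q_j$ individually has a simple pole at the origin, this is cancelled by a zero of $p_j$, as visible from \eqref{eq:Plancherel-density} since the Plancherel density is regular at $\lambda=0$). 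Integration over the compact unit circle preserves holomorphy, so $F(z)/z^2$ extends holomorphically across $z=0$ and is, in particular, bounded there.
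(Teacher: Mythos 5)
Your proof is correct and follows essentially the same route as the paper's. The paper's own proof is only three sentences long and leaves the geometric argument implicit (``the holomorphic extension of $F$ \dots is a consequence of the fact that $p_jq_j$ has singularities only at points of $S_j$''); you supply exactly the missing details, namely that for $w$ on the unit circle the segment $z\cdot[-1,1]$ containing $z\cz(w)$ and $z\cz(-iw)$ meets $S_1\cup S_2\subset i\R\setminus\{0\}$ only when $z\in i\R$ with $|z|\geq L$, and that the origin $0\notin S_j$ (so that $p_jq_j$ is regular there even when $q_j$ alone has a pole at $0$). The evenness and boundedness arguments are the ones the paper intends.
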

\begin{prf}
The holomorphic extension of $F$ to $\C\setminus i((-\infty,- L]\cup[L, +\infty))$ is
a consequence of the fact that $p_j q_j$ has singularities only at points of $S_j$.
Since $\psi_z(w)$ and $\phi_z(w)$ are even functions of $z$, so is $F(z)$. Finally,
\eqref{eq:phiz} implies that $F(z)z^2$ is bounded near $z=0$.
\end{prf}

Let $f\in C^\infty_c(\X)$ and $y\in \X$ be fixed.
The following proposition, which is the analogue to \cite[Proposition 2, (a)]{HPP14}, reduces the study of the meromorphic extension from $\C^+$ to $\C \setminus (-\infty,0]$ of the resolvent
$z \to [R(z)f](y)$ to that of the function $z\to F(z)$.
Recall that we are omitting the dependence on $f$ and $y$ from the notation. We will do this
for the resolvent as well, writing $R(z)$ instead of $[R(z)f](y)$.

\begin{pro}
\label{pro:holoextRF}
Let $f\in C^\infty_c(\X)$ and $y\in \X$ be fixed.
Fix $x_0>0$ and $y_0>0$. Let
\begin{eqnarray*}
Q&=&\{z\in \C; \Re z>x_0,\ y_0> \Im z \geq 0\}\\
U&=&Q\cup  \{z\in \C; \Im z <0\}
\end{eqnarray*}
Then there is a holomorphic function $H:U\to \C$ such that
\begin{equation}
\label{eq:holoextRF}
R(z)=H(z)+\pi i\, F(z) \qquad (z\in Q).
\end{equation}
As a consequence, the resolvent $R(z)=[R(z)f](y)$ extends holomorphically from $\C^+$ to
$\C \setminus \big((-\infty,0] \cup i(-\infty, -L]\big)$.
\end{pro}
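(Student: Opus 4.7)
The plan is a standard Sokhotski--Plemelj-type jump argument: I deform the contour of integration in the formula $R(z)=\int_0^\infty \frac{F(r)\,r}{r^2-z^2}\,dr$, valid for $z\in\C^+$ by subsection~\ref{subsection:resolvent kernel}, upward past the pole at $w=z$ produced when $z\in Q$, and collect the residue. First I would choose constants $0<x_0'<x_0$ and $y_1>y_0$ and let $\gamma$ be the broken path
\[
\gamma=[0,x_0']\cup[x_0',x_0'+iy_1]\cup[x_0'+iy_1,+\infty+iy_1],
\]
a curve from $0$ to $+\infty$ lying in $\{\Re w\ge 0,\Im w\ge 0\}$ and meeting the imaginary axis only at the starting point $0$. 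Since all singularities of $F$ sit on $i[L,+\infty)$, Lemma~\ref{lemma:holoextF} ensures $F$ is holomorphic in a neighborhood of $\gamma$; moreover $\gamma$ is disjoint from $U$ because on $Q$ one has $\Re z>x_0>x_0'$ and $\Im z<y_0<y_1$, while on $\C^-$ one has $\Im z<0\le\Im w$ on $\gamma$. I would then put
\[
H(z):=\int_\gamma \frac{F(w)\,w}{w^2-z^2}\,dw,\qquad z\in U,
\]
which, once convergence is established, is holomorphic on $U$ as a parameter-dependent Cauchy integral over a contour disjoint from $z$.

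The identity $R(z)=H(z)+\pi i F(z)$ on $Q\cap\C^+$ would be proved as follows. Fix $z\in Q\cap\C^+$ and $\Lambda>\Re z$, and apply Cauchy's residue theorem to the rectangular contour obtained from $[x_0',\Lambda]$, the vertical segment $[\Lambda,\Lambda+iy_1]$, the reverse of the horizontal arm of $\gamma$ from $\Lambda+iy_1$ to $x_0'+iy_1$, and the reverse of the vertical arm $[x_0',x_0'+iy_1]$. This rectangle encloses only the simple pole $w=z$ (the pole at $w=-z$ lies at $\Re(-z)<-x_0<0$), with residue $F(z)z/(2z)=F(z)/2$. Letting $\Lambda\to\infty$ so that the right-hand segment drops out yields $\int_0^\infty \frac{F(r)r}{r^2-z^2}\,dr-\int_\gamma \frac{F(w)w}{w^2-z^2}\,dw=\pi i F(z)$, i.e.\ $R(z)=H(z)+\pi i F(z)$. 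By continuity as $\Im z\to 0^+$ the identity extends to all of $Q$, with $R(z)$ on $Q\cap\R$ defined by the right-hand side. Patching the original $R$ on $\C^+$ with $H+\pi iF$ on $U\setminus i(-\infty,-L]$ via agreement on $Q\cap\C^+$ then extends $R$ holomorphically across $(x_0,+\infty)$ into $\C^-\setminus i(-\infty,-L]$; varying $x_0$ down to $0$ and $y_0$ over $(0,+\infty)$ exhausts $\C\setminus((-\infty,0]\cup i(-\infty,-L])$.

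The main obstacle is the decay estimate for $F$ along the horizontal arm of $\gamma$ at $\Im w=y_1$, needed both for the convergence of the integral defining $H$ and to kill the right-hand segment as $\Lambda\to\infty$. On the unit circle $|e^{i\theta}|=1$ appearing in~\eqref{2.2}, the spectral parameter attached to $\psi_w(e^{i\theta})$ is $\lambda=\frac{w}{b_1}\cos\theta\,\beta_1+\frac{w}{b_2}\sin\theta\,\beta_2$, which in view of the orthogonality of $\beta_1$ and $\beta_2$ and of $b_j=\sqrt{\langle\beta_j,\beta_j\rangle}$ satisfies $|\Im\lambda|=|\Im w|$. Hence along the arm the Paley--Wiener estimate~\eqref{eq:exptype-conv} gives $|\psi_w(e^{i\theta})|\le C_N(1+|\lambda|)^{-N}e^{Ry_1}$ for every $N$, with the exponential factor uniformly bounded because $|\Im\lambda|$ is pinned to $y_1$; combined with the polynomial growth of $\phi_w(e^{i\theta})$ in $w$ from~\eqref{eq:phiz}, this yields Schwartz decay of $F(w)$ in $\Re w$ on horizontal strips, which is exactly what the rectangle-closing argument needs.
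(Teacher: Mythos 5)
Your overall plan — deform the contour of $\int_0^\infty \frac{F(r)\,r}{r^2-z^2}\,dr$ past the pole at $w=z$ and collect the residue — is the same Sokhotski–Plemelj idea the paper uses, and your decay estimate along the horizontal arm (via the identity $|\Im\lambda|=|\Im w|$ for $|e^{i\theta}|=1$ and the Paley–Wiener bound \eqref{eq:exptype-conv}, together with the boundedness of $q_1q_2$ on a right half-plane) is sound and matches the paper's justification of Cauchy's theorem. The residue computation giving $\pi i F(z)$ is also correct.

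However, there is a genuine gap in your construction of $H$. You set $H(z)=\int_\gamma \frac{F(w)\,w}{w^2-z^2}\,dw$ and claim it is holomorphic on $U=Q\cup\C^-$ because $\gamma\cap U=\emptyset$. But the integrand has poles at \emph{both} $w=z$ and $w=-z$, so $H$ is singular whenever $z\in -\gamma$ as well. Since $\gamma$ has points with $\Im w>0$ (the vertical segment and the entire horizontal arm), the reflected set $-\gamma$ intersects $\C^-$ (along the half-strip boundary $\{\Re z=-x_0',\,-y_1\le\Im z<0\}\cup\{\Im z=-y_1,\,\Re z\le -x_0'\}$), which lies inside $U$. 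So the $H$ you define is \emph{not} holomorphic on all of $U$, and the assertion that ``$H+\pi i F$ on $U\setminus i(-\infty,-L]$'' gives the extension fails: it has spurious, $\gamma$-dependent singularities along $-\gamma\cap\C^-$. Varying $x_0'$ and $y_1$ cannot fix this without an additional monodromy argument that you do not supply.

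The paper sidesteps this precisely by splitting the kernel: $\frac{2r}{r^2-z^2}=\frac{1}{r-z}+\frac{1}{r+z}$, so $2R(z)=\int_0^\infty\frac{F(r)}{r-z}\,dr+\int_0^\infty\frac{F(r)}{r+z}\,dr$, and only the first integral (singular at $r=z$, not at $r=-z$) is deformed onto $\gamma_+$; the second integral stays on $[0,+\infty)$, where it is holomorphic for all $z\notin(-\infty,0]$ and in particular on all of $U$. The resulting $H(z)=\tfrac12\big(\int_0^\infty\frac{F(r)}{r+z}\,dr+\int_{\gamma_+}\frac{F(\zeta)}{\zeta-z}\,d\zeta\big)$ is then genuinely holomorphic on $U$. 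To repair your proof, replace the unified kernel by this split (equivalently, deform only the $\frac{1}{w-z}$ part), after which the rest of your argument — the rectangular Cauchy contour, the decay estimate, and the exhaustion by varying $x_0,y_0$ — goes through.
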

\begin{prf}
Since
\[
\frac{2r}{r^2-z^2}=\frac{1}{r-z}+\frac{1}{r+z},
\]
we have
\begin{equation} \label{original integral}
2 R(z)=\int_0^\infty \frac{F(r)}{r-z}\,dr+\int_0^\infty \frac{F(r)}{r+z}\,dr\,,
\end{equation}
where the first integral is holomorphic in $\C\setminus [0,+\infty)$ and the
second is holomorphic in $\C\setminus (-\infty,0]$.

Let $\gamma_+$ be a curve in the first quadrant that starts at $0$, goes to the right and up above $z_0=x_0+iy_0$, and then becomes parallel to the positive real line and goes to infinity. We suppose that $Q$ is in the interior to the region bounded by $\gamma_+$ and the positive real axis.

Let $M, m$ be two fixed positive numbers. The convolution defining the function $\psi_z(w)$
in \eqref{eq:psiz} can be written in terms of the Helgason-Fourier transform of the function
$f\in C_c^\infty(\X)$; see \cite[Ch. III, Lemma 1.2 and proof of Theorem 1.3]{He3}. Therefore,  by the Paley-Wiener theorem for this transform (see \cite[Theorem 5.1, p. 260]{He3}), $\psi_z(w)$  is rapidly decreasing in the strip $\{z\in \C; |\Im z|\leq M\}$. See also \cite[formula (22)]{HPP14}.
Moreover, $\phi_z(w)$ is a polynomial function of $z$ times $q_1\big(z\cz(w)\big) q_2\big(z\cz(-iw)\big)$. This latter function is bounded in the half plane  $\{z \in \C; \Re z \geq m\}$. These estimates allow us to apply Cauchy's theorem:
for $z\in Q$, we have
\begin{equation}\label{first integral computed}
\int_0^\infty \frac{F(r)}{r-z}\,dr=\int_{\gamma_+} \frac{F(\zeta)}{\zeta-z}\,d\zeta + 2\pi i F(z)\,,
\end{equation}
where the first integral extends holomorphically from the interior of $Q$ to $U$.
The proposition then follows, with
\[
H(z)=\frac{1}{2}\left(\int_0^\infty \frac{F(r)}{r+z}\,dr+\int_{\gamma_+} \frac{F(\zeta)}{\zeta-z}\,d\zeta\right)\,.
\]
\end{prf}

As in the case of $\SL(3,\R)/\SO(3)$ in \cite{HPP14}, the extension of $R(z)$ across $(-\infty,0]$ can be deduced from the results of Mazzeo and Vasy \cite{MV05} and of Strohmaier \cite{Str05}. Another option is to rewrite Proposition \ref{pro:holoextRF} with the region $Q$ replaced by $\{z\in\C;\ \Re z<-x_0,\ y_0> \Im z \geq 0\}$.

Let $\log$ denote the holomorphic branch of the logarithm defined on
$\C \setminus ]-\infty, 0]$ by $\log 1=0$.
It gives a biholomorphism between $\C^+$ and the strip $S_{0,\pi}=\{\tau \in \C: 0<\Im \tau < \pi\}$. As above, $f\in C^\infty_c(\X)$ and $y\in \X$ are fixed and omitted from the notation. Set $\tau=\log z$ and define
\begin{equation}
\label{eq:R-log}
\Rlog\null(\tau)=R(e^{\tau})=
\int_{\mathfrak{a}^*} \frac{1}{\inner{\lambda}{\lambda}-e^{2\tau}}\; (f \times \varphi_{\bi\lambda})(y) \; \frac{d\lambda}{\cHC(\bi\lambda)\cHC(-\bi\lambda)}\,.
\end{equation}
Polar coordinates in $\mathfrak{a}^*$ now give
\begin{equation}
\label{eq:R-log-polar}
\Rlog(\tau)=\int_{-\infty}^{+\infty} \frac{1}{e^{2t}-e^{2\tau}}\; F(e^t) e^{2t} \; dt\,.
\end{equation}
Since $F$ is even, the function $t\to F(e^t)$ is $i\pi$-periodic.

\begin{pro}
\label{pro:holoextRFnegaxis}
Let $f\in C^\infty_c(\X)$ and $y\in \X$ be fixed.
The function $\Rlog(\tau)=[\Rlog(\tau)f](y)$ extends
holomorphically from $S_{0,\pi}$ to the open set
\[
\Ulog=\C \setminus \bigcup_{n \in \mathbb Z\setminus \{0\}}
\Big(i\pi \big(n+\tfrac{1}{2}\big)+[\log(L),+\infty)\Big)
\]
and satisfies the identity:
\begin{equation}
\Rlog(\tau+i\pi)=\Rlog(\tau)+i\pi F(e^\tau)\qquad (\tau \in \Ulog\setminus \big(i\pi \big(\tfrac{1}{2}\big)+[\log(L),+\infty\big))\,.
\end{equation}
Consequently, the resolvent $R(z)=[R(z)f](y)$ extends holomorphically from $\C \setminus \big((-\infty,0] \cup i(-\infty, -L]\big)$ to a logarithmic Riemann
surface branched along $(-\infty,0]$, with the preimages of  $i\big((-\infty, -L]\cup [L, +\infty)\big)$ removed and, in terms of monodromy, it satisfies the following equation
\begin{equation*}
R(z e^{2i\pi})=R(z)+2 i\pi\, F(z)\\ \qquad (z \in \C \setminus  \big((-\infty,0]\cup i(-\infty, -L]\cup i[L,+\infty) \big)).
\end{equation*}

\end{pro}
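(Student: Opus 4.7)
The plan is to base the proof on the polar-coordinate representation \eqref{eq:R-log-polar} of $\Rlog$, which is valid on $S_{0,\pi}$. As a function of $t\in\C$, the integrand $F(e^t)e^{2t}/(e^{2t}-e^{2\tau})$ has two families of simple poles: denominator zeros at $t=\tau+ik\pi$ for $k\in\Zb$, each with residue $F(e^\tau)/2$ (using that $F$ is even, so $F(e^{\tau+ik\pi})=F((-1)^k e^\tau)=F(e^\tau)$), together with the poles inherited from $F(e^t)$, which lie on the horizontal rays $\{\Im t=\pi(k+\tfrac{1}{2}),\ \Re t\geq\log L\}$ for $k\in\Zb$. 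The Paley-Wiener decay from \eqref{eq:exptype-conv} justifies all the contour manipulations that follow.

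The central step is the translation identity $\Rlog(\tau+i\pi)=\Rlog(\tau)+i\pi F(e^\tau)$. For this I would deform the integration contour $\R$ in \eqref{eq:R-log-polar} to a curve $\gamma$ that agrees with $\R$ outside a bounded set but detours near $\Re t=\Re\tau$ in such a way that, as $\tau$ is varied continuously across $\Im\tau=\pi$, the pole $t=\tau-i\pi$ never crosses $\gamma$; the integral $\int_\gamma$ is then holomorphic in $\tau$ and provides the analytic continuation of $\Rlog$ to a neighborhood of $S_{\pi,2\pi}$. A residue computation expresses $\int_\gamma$ as $\int_\R$ plus the residue contribution $i\pi F(e^\tau)$; on the other hand, because the integrand is $i\pi$-periodic in $t$, the naive integral $\int_\R$ evaluated at $\tau\in S_{\pi,2\pi}$ equals $\Rlog(\tau-i\pi)$ by the definition of $\Rlog$ on $S_{0,\pi}$. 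Combining these two facts yields the identity.

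Iterating the identity extends $\Rlog$ inductively across every line $\Im\tau=n\pi$, $n\in\Zb$. At each step the deformation is permissible provided $\gamma$ can be routed around the singular rays of $F(e^t)$; obstructions occur precisely where the pole being pushed through $\R$ would collide with such a ray, namely at $\tau$ with $\Re\tau\geq\log L$ and $\Im\tau=\pi(n+\tfrac{1}{2})$. The case $n=0$ corresponds via $z=e^\tau$ to the positive imaginary axis, where $\Rlog$ is already holomorphic by Proposition~\ref{pro:holoextRF}, so that ray is not excluded; the remaining values of $n$ produce exactly the excluded set defining $\Ulog$. Applying the identity twice and using the evenness of $F$ gives $\Rlog(\tau+2i\pi)=\Rlog(\tau)+2i\pi F(e^\tau)$, which translates back under $z=e^\tau$ to the stated monodromy relation $R(ze^{2i\pi})=R(z)+2i\pi F(z)$ and realizes $R$ on a logarithmic Riemann surface branched along $(-\infty,0]$. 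The most delicate point is the case-by-case tracking of how the deformed contour must be threaded between the singular rays of $F(e^t)$ on successive strips, which is what both pins down the sign of the residue contribution and fixes the precise exclusion set in $\Ulog$.
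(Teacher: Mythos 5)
The paper's own proof is simply a citation of Strohmaier \cite[Proposition~4.3]{Str05} applied to $f(x)=F(x)x$ (with a pointer to \cite{MV05}); your proposal writes out the contour-deformation argument that such a citation defers to, and the overall scheme is sound: the residue of the integrand of \eqref{eq:R-log-polar} at $t=\tau+ik\pi$ is $\tfrac12 F(e^\tau)$, the $i\pi$-periodicity of the integrand as a function of $\tau$ (this, rather than its periodicity in $t$, is the relevant fact) identifies $\int_\R$ with $\Rlog(\tau-i\pi)$ on the next strip, iterating the translation identity produces the extension and the monodromy, and obstructions occur exactly where a tracked pole would collide with a singular ray of $F(e^t)$, which reproduces $\Ulog$. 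Two points deserve care. First, $F(e^t)$ does not have \emph{poles} along $\{\Im t=\pi(k+\tfrac12),\ \Re t\geq\log L\}$: the function $F$ is holomorphic off $i\big((-\infty,-L]\cup[L,\infty)\big)$ and has branch-type singularities on those rays (Sections~\ref{section:contour}--\ref{section:holoext-even} are devoted to resolving them on Riemann surfaces), so ``poles inherited from $F(e^t)$'' is a misnomer --- harmless for the argument, which only needs $\gamma$ to avoid those rays, but worth correcting. Second, the sign of the residue contribution, which you explicitly defer, is in fact the place where the argument must be executed carefully: the pole $t=\tau-i\pi$ crosses $\R$ from below as $\Im\tau$ increases through $\pi$, so $\gamma$ must bump \emph{upward}; for $\Im\tau$ slightly above $\pi$ the pole then lies between $\R$ and $\gamma$, the closed contour ``$\gamma$ followed by $-\R$'' encircles it clockwise, and one obtains $\int_\gamma=\int_\R - i\pi F(e^\tau)$, i.e.\ $\Rlog(\tau+i\pi)=\Rlog(\tau)-i\pi F(e^\tau)$. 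This is the \emph{opposite} sign to the one in the stated identity, and a Sokhotski--Plemelj consistency check against the relation $R=H+\pi i F$ of Proposition~\ref{pro:holoextRF} gives the same minus sign; a full write-up should carry out this computation explicitly rather than take the stated sign on faith.
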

\begin{prf}
This is \cite[Proposition 4.3]{Str05} with $f(x)=F(x)x$ for $x \in [0,+\infty)$; see also \cite[Theorem 1.3]{MV05}. Recall that $F(0)=0$.
\end{prf}

Proposition \ref{pro:holoextRF} shows that all possible resonances of the resolvent $R$ of the Laplacian of $\X$ are located along the half-line $i(-\infty, -L]$. Because of (\ref{eq:holoextRF}),
the possible meromorphic extension of $R$ across this domain is equivalent to that of the
function $F$. In fact, we shall see that the function $F$, and then the resolvent, extend further along the negative imaginary axis to a Riemann surface above $\C$. The extension is holomorphic 
except when both multiplicities $m_{\beta_1}$ and $m_{\beta_2}$ are odd. In the latter case, we shall prove that the extension is meromorphic, with simple poles, and first pole (i.e. the first resonance) occurs at $-i(L_{1,0}+L_{2,0})$. Observe that 
$
L_{1,0}+L_{2,0}>L=\min\{L_{1,0},L_{2,0}\}
$.

If both $\X_1$ and $\X_2$ have even multiplicities, then Proposition \ref{pro:holoextRFnegaxis}
completely describes the holomorphic extension of the resolvent to a logarithmic Riemann surface 
above $\C$; see \cite[Theorem 3.3(2)]{Str05}. Therefore, in the following, we shall assume that at least one of the numbers $m_{\beta_1}$ or $m_{\beta_2}$ is odd. 

\section{Contour deformation and residues}
\label{section:contour}

The following proposition is a consequence of the Residue Theorem.

\begin{pro}\label{2.5}
Suppose $z\in \C\setminus i((-\infty,- L]\cup[L, \infty))$ and $r>0$ are such that
\begin{equation}\label{2.5.1}
(S_1\cup S_2)\cap z\partial E_{\cz(r),\sz(r)}=\emptyset.
\end{equation}
Then
\begin{equation}
\label{eq:F-contour}
F(z)=F_r(z)+2\pi i \, G_r(z),
\end{equation}
where
\begin{eqnarray*}
F_r(z)&=&\int_{\partial D_r}\psi_z(w)\phi_z(w)\,dw,\\
G_r(z)&=&{\sum}_{w_0}'\psi_z(w_0)\Res_{w=w_0}\phi_z(w),
\end{eqnarray*}
and $\sum_{w_0}'$ denotes the sum over all the $w_0$ such that
\begin{equation}\label{2.5.2}
z\cz(w_0)\in S_1\cap z(\Eg_{\cz(r),\sz(r)}\setminus [-1,1])
\end{equation}
or
\begin{equation}\label{2.5.3}
z\cz(-iw_0)\in S_2\cap z(\Eg_{\cz(r),\sz(r)}\setminus [-1,1]).
\end{equation}
Both $F_r$ and $G_r$ are holomorphic functions on the open subset of $\C\setminus i((-\infty,- L]\cup[L, \infty))$ where the condition \eqref{2.5.1} holds. Furthermore, $F_r$ extends to a holomorphic function on the open subset of $\C$ where the condition \eqref{2.5.1} holds.
\end{pro}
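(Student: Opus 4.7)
The plan is to apply the Cauchy residue theorem to deform the contour $|w|=1$ in~\eqref{2.2} to the circle $|w|=r$. For fixed $z$ in the stated domain, the integrand $\psi_z(w)\phi_z(w)$ is meromorphic on $\C^\times$ in the variable $w$: $\psi_z$ is holomorphic because the Paley-Wiener bound~\eqref{eq:exptype-conv} makes $\lambda\mapsto(f\times\varphi_{\bi\lambda})(y)$ entire, while $\phi_z$ has poles only where the factors $p_jq_j$ appearing in~\eqref{eq:phiz} are singular. By~\eqref{eq:pq} and~\eqref{eq:Plancherel-density}, $p_jq_j$ is singular precisely on $S_j$, so the poles of $\phi_z$ are the $w_0$ with $z\cz(w_0)\in S_1$ or $z\cz(-iw_0)\in S_2$. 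Hypothesis~\eqref{2.5.1}, combined with the mapping property~\eqref{second bijection} (and its analogue for $w\mapsto\cz(-iw)$), guarantees that no such pole lies on $\partial\Dg_r$.

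To identify the poles strictly between $|w|=r$ and $|w|=1$, I use the biholomorphism~\eqref{first bijection}, which sends $\Dg_1\setminus\overline{\Dg_r}$ onto $\Eg_{\cz(r),\sz(r)}\setminus[-1,1]$ for $r<1$. A pole $w_0$ of $\phi_z$ in this annulus corresponds, via the canonical branch of $\cz^{-1}$, to a point of $(S_j/z)\cap(\Eg_{\cz(r),\sz(r)}\setminus[-1,1])$ for some $j\in\{1,2\}$, i.e.\ to a $w_0$ satisfying~\eqref{2.5.2} or~\eqref{2.5.3}. The residue theorem applied to this annulus then yields $F(z)-F_r(z)=2\pi i\,{\sum}_{w_0}'\psi_z(w_0)\Res_{w=w_0}\phi_z(w)$, which is~\eqref{eq:F-contour}. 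For $r>1$ one deforms outward to $|w|=r$; the formula takes the same shape once one invokes the two symmetries $\psi_z(1/w)=\psi_z(w)$ (from Weyl-group invariance of $\varphi_\lambda$, since $w\mapsto 1/w$ sends the spectral parameter to its image under the Weyl-group element negating $\a_2^*$, using $\cz(-i/w)=-\cz(-iw)$) and $\phi_z(1/w)=w^2\phi_z(w)$ (obtained from $\cz(1/w)=\cz(w)$, $\sz(1/w)=-\sz(w)$, $\cz(-i/w)=-\cz(-iw)$, and the oddness of $p_jq_j$ from Remark~\ref{rem:parity-pq}). At a simple pole the second symmetry gives $\Res_{w=1/w_0}\phi_z=-\Res_{w=w_0}\phi_z$; this sign cancels the reversed orientation of the outer annulus and reproduces~\eqref{eq:F-contour}.

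Holomorphy of $F_r(z)$ follows by differentiation under the integral: the integrand is jointly continuous in $(z,w)$ and holomorphic in $z$ for each $w\in\partial\Dg_r$, and~\eqref{2.5.1} ensures that no pole reaches the compact contour. This argument is independent of the $L$-avoidance hypothesis, so $F_r$ extends holomorphically to every $z\in\C$ for which~\eqref{2.5.1} holds. For $G_r$: the sum is finite (since $S_j$ is discrete and $\Eg_{\cz(r),\sz(r)}$ is bounded); under~\eqref{2.5.1} the list of contributing $w_0$ is locally constant in $z$ (none can cross $\partial\Dg_r$ without violating the hypothesis); each $w_0$ depends holomorphically on $z$ by the implicit function theorem applied to $\cz(w_0)=s/z$ with $s\in S_1\cup S_2$; and both $\psi_z(w_0)$ and $\Res_{w=w_0}\phi_z(w)$ are holomorphic in $z$. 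I expect the main technical obstacle to be the case analysis $r<1$ versus $r>1$ and the careful bookkeeping of residues and orientations needed to produce the uniform formula~\eqref{eq:F-contour}; the symmetry $\phi_z(1/w)=w^2\phi_z(w)$, together with its corollary $\Res_{w=1/w_0}\phi_z=-\Res_{w=w_0}\phi_z$, is exactly what makes this bookkeeping work.
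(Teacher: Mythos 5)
Your proof is correct and fleshes out what the paper leaves implicit: the paper simply states that the proposition ``is a consequence of the Residue Theorem'' and gives no details. Your identification of the poles of $\phi_z$ via the singularities of $q_1,q_2$, the matching of these poles with points of $(S_j/z)\cap(\Eg_{\cz(r),\sz(r)}\setminus[-1,1])$ through the biholomorphism \eqref{first bijection}, and the finiteness and local-constancy arguments for the holomorphy of $G_r$ are all sound. Your treatment of $r>1$ via the symmetries $\psi_z(1/w)=\psi_z(w)$, $\phi_z(1/w)=w^2\phi_z(w)$, and the consequence $\Res_{w=1/w_0}\phi_z=-\Res_{w=w_0}\phi_z$ is a genuine addition: the paper's later applications (Proposition~\ref{prop:SjrzW} onward) restrict to $0<r<1$, so it never explains why the same formula holds for $r>1$, even though the hypothesis allows any $r>0$. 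The only small imprecision is the phrase ``reversed orientation of the outer annulus'': the extra minus sign you are cancelling actually comes from the Residue Theorem applied to the annulus $1<|w|<r$ producing $F_r-F$ (rather than $F-F_r$) on the left-hand side, and it is this sign, not an orientation reversal, that $\Res_{w=1/w_0}\phi_z=-\Res_{w=w_0}\phi_z$ compensates.
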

Let $z\in \C^\times$. By Lemma \ref{rays and circles} (with $\zeta_0=i \frac{|z|}{z}$ and $\zeta=i$), there is no $w_0$ satisfying both $z\cz(w_0) \in i\R$ and
$z\cz(-iw_0) \in i\R$. In particular, there is no $w_0$ which satisfies both \eqref{2.5.2} and \eqref{2.5.3}.
This implies that the set of singularities of the functions $w\to
q_1\big(z\cz(w)\big)$ and $w\to q_2\big(z\cz(-iw)\big)$ are disjoint. 
 Hence, we deduce the following lemma.
\begin{lem}\label{2.6}
With the notation of Proposition \ref{2.5} we have
\begin{eqnarray*}
{\sum}_{w_0}'\psi_z(w_0)\Res_{w=w_0}\phi_z(w)
={\sum}_{w_1}'\psi_z(w_1)\Res_{w=w_1}\phi_z(w)
+{\sum}_{w_2}'\psi_z(w_2)\Res_{w=w_2}\phi_z(w),
\end{eqnarray*}
where
\begin{equation}\label{2.6.1}
z\cz(w_1)\in S_1\cap z(\Eg_{\cz(r),\sz(r)}\setminus [-1,1])
\end{equation}
and
\begin{equation}\label{2.6.2}
z\cz(-iw_2)\in S_2\cap z(\Eg_{\cz(r),\sz(r)}\setminus [-1,1]).
\end{equation}
\end{lem}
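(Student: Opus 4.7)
The plan is to show that Lemma~\ref{2.6} is, at its core, a disjointness statement: the two families of singularities picked up by the contour deformation in Proposition~\ref{2.5} never collide, so the combined residue sum $\sum'_{w_0}$ splits cleanly as the sum of its two halves. The key tool is Lemma~\ref{rays and circles} applied with a particular choice of parameters.

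First I would fix $z\in\C^\times$ and set $\zeta_0 = i|z|/z$, so that $|\zeta_0|=1$ and $\zeta_0\R = (1/z)\cdot i\R$. Since both $S_1$ and $S_2$ are contained in $i\R$, any $w_0$ satisfying \eqref{2.5.2} must have $\cz(w_0)\in \zeta_0\R$, and the condition $z\cz(w_0)\in z(\Eg_{\cz(r),\sz(r)}\setminus[-1,1])$ places $\cz(w_0)$ in $\zeta_0\R\setminus[-1,1]$; equivalently, $w_0\in \cz^{-1}(\zeta_0\R\setminus[-1,1])$. In the same way, a $w_0$ satisfying \eqref{2.5.3} forces $\cz(-iw_0)\in \zeta_0\R\setminus[-1,1]$, i.e.\ $-iw_0\in \cz^{-1}(\zeta_0\R\setminus[-1,1])$, or equivalently $w_0\in i\cdot \cz^{-1}(\zeta_0\R\setminus[-1,1])$.

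Next I would invoke Lemma~\ref{rays and circles} with $\zeta=i$. It asserts that
\[
\bigl(i\cdot\cz^{-1}(\zeta_0\R\setminus[-1,1])\bigr)\cap \cz^{-1}(\zeta_0\R\setminus[-1,1])=\emptyset
\]
because $i\neq\pm 1$. Consequently, no $w_0$ satisfies both \eqref{2.5.2} and \eqref{2.5.3} simultaneously, so the index set of $\sum'_{w_0}$ in Proposition~\ref{2.5} partitions as the disjoint union of the indices of type $w_1$ (those satisfying \eqref{2.6.1}) and of type $w_2$ (those satisfying \eqref{2.6.2}). Splitting the sum accordingly gives the identity of Lemma~\ref{2.6}. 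The same disjointness argument justifies the preliminary sentence of the excerpt, namely that the singular sets of $w\mapsto q_1(z\cz(w))$ and $w\mapsto q_2(z\cz(-iw))$ do not meet on the region of interest.

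There is essentially no obstacle here beyond the bookkeeping: once the correct parameters $\zeta=i$ and $\zeta_0=i|z|/z$ are identified in the cited lemma from \cite{HPP14}, everything follows. The one point worth noting is that the restriction $z\cz(w_0)\in z(\Eg_{\cz(r),\sz(r)}\setminus[-1,1])$ from Proposition~\ref{2.5} is precisely what removes the segment $[-1,1]$ and thereby matches the hypothesis of Lemma~\ref{rays and circles} exactly, so that $\cz^{-1}$ is applied on a set where it is defined and single-valued.
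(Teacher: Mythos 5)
Your argument is correct and is precisely the one the paper uses: both choose $\zeta_0=i|z|/z$ and $\zeta=i$ in Lemma~\ref{rays and circles} to conclude that no $w_0$ in the annulus satisfies \eqref{2.5.2} and \eqref{2.5.3} simultaneously, so the primed sum splits into the two indicated halves. The only thing worth making explicit (which the paper also leaves tacit) is that the ``equivalently'' steps rely on $w_0$ and $-iw_0$ lying in $\Dg_1\setminus\{0\}$, where $\cz$ is a biholomorphism onto $\C\setminus[-1,1]$, so that passing from $\cz(w_0)$ (resp.\ $\cz(-iw_0)$) back to $w_0$ (resp.\ $-iw_0$) via $\cz^{-1}$ is legitimate.
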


The remainder of this section is devoted to the explicit computation of the function $G_r$
occurring in \eqref{eq:F-contour}. We have to distinguish the case when both multiplicities
$m_{\beta_1}$ and $m_{\beta_2}$ are odd, from the case when one of them is even.

\subsection{The case of two odd multiplicities}
\label{subsection:contours-odd-mul}

In this subsection we suppose that both multiplicities
$m_{\beta_1}$ and $m_{\beta_2}$ are odd.

\begin{lem}\label{2.7}
With the notation of Proposition \ref{2.5}, \eqref{2.6.1} and \eqref{2.6.2}, we have for $z\in \C\setminus i((-\infty,- L]\cup[L, +\infty))$,
\begin{equation}\label{2.7.1}
\Res_{w=w_1}\phi_z(w)=\frac{ib_1}{\pi}z\cz(w_1) p_1\big(z\cz(w_1)\big) p_2\big(z\cz(-iw_1)\big) q_2\big(z\cz(-iw_1)\big)
\end{equation}
and
\begin{equation}\label{2.7.2}
\Res_{w=w_2}\phi_z(w)=\frac{b_2}{i\pi}z\cz(-iw_2) p_1\big(z\cz(w_2)\big) p_2\big(z\cz(-iw_2)\big) q_1\big(z\cz(w_2)\big)
\end{equation}
\end{lem}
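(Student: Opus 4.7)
The plan is to compute each residue as the product of a holomorphic prefactor (evaluated at the pole) and the residue of the sole singular factor, then simplify using a few elementary identities for $\cz$ and $\sz$.

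The starting observation is the explicit cotangent form of $q_j$: since $m_{\beta_j}$ is odd, \eqref{eq:pq} gives $q_j(u) = \cot\!\bigl(\pi(iu/b_j - \rho_{\beta_j})\bigr)$, whose simple poles have residue $-ib_j/\pi$. The factor $p_j$ is a polynomial, hence entire, so the singularities of $p_j q_j$ (i.e.\ the set $S_j$) are all simple poles of $q_j$ with the same residue $-ib_j/\pi$. Lemma \ref{rays and circles} (applied as right after Proposition \ref{2.5}) guarantees that at $w_1$ satisfying \eqref{2.6.1}, the companion factor $q_2(z\cz(-iw_1))$ is holomorphic, and symmetrically at $w_2$. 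Thus in each case only one of the two cotangent factors in $\phi_z$ produces the pole, the rest being a holomorphic prefactor.

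For the pole at $w_1$, the chain rule for residues yields
\[
\Res_{w=w_1} q_1(z\cz(w)) \;=\; \frac{-ib_1/\pi}{z\cz'(w_1)}.
\]
Multiplying by the value at $w_1$ of the holomorphic prefactor $\phi_z(w)/q_1(z\cz(w))$ gives
\[
\Res_{w=w_1}\phi_z(w) = -z^2 \cz(w_1)\frac{\sz(w_1)}{w_1}\,p_1(z\cz(w_1))\,p_2(z\cz(-iw_1))\,q_2(z\cz(-iw_1)) \cdot \frac{-ib_1}{\pi z\cz'(w_1)}.
\]
The key simplification is the identity
\[
w\,\cz'(w)=\sz(w),
\]
which is immediate from $\cz(w)=(w+w^{-1})/2$. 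This collapses $\sz(w_1)/(w_1\cz'(w_1))=1$ and gives \eqref{2.7.1} directly.

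For the pole at $w_2$, I apply the same scheme with $g(w)=z\cz(-iw)$; then $g'(w_2)=-iz\cz'(-iw_2)$, so
\[
\Res_{w=w_2} q_2(z\cz(-iw)) = \frac{-ib_2/\pi}{-iz\cz'(-iw_2)} = \frac{b_2}{\pi z\cz'(-iw_2)}.
\]
A direct computation from $\cz'(v)=(1-v^{-2})/2$ gives $\cz'(-iw)=(1+w^{-2})/2$, hence the companion identity
\[
w\,\cz'(-iw)=\cz(w),
\]
which converts $\cz(w_2)\sz(w_2)/(w_2\cz'(-iw_2))$ into $\sz(w_2)$. Combined with $\sz(w_2)=i\cz(-iw_2)$, which is built into \eqref{def-of-c-and-s}, one obtains
\[
\Res_{w=w_2}\phi_z(w) \;=\; \tfrac{-ib_2}{\pi}\,z\cz(-iw_2)\,p_1(z\cz(w_2))\,p_2(z\cz(-iw_2))\,q_1(z\cz(w_2)),
\]
which is \eqref{2.7.2} after rewriting $-i=1/i$. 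There is no genuine obstacle here: the work is routine once the two identities $w\cz'(w)=\sz(w)$ and $w\cz'(-iw)=\cz(w)$ are noticed; the only place to be careful is bookkeeping the extra $-i$ coming from differentiating the inner map $w\mapsto -iw$.
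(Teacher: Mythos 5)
Your proof is correct and follows essentially the same route as the paper: single out the unique singular cotangent factor (justified by the remark after Proposition \ref{2.5} / Lemma \ref{rays and circles}), apply the chain rule for residues, and simplify. The paper's computation uses $\cz'(w)=\sz(w)/w$ implicitly and works with the $\coth$ form of $q_j$; your explicit identities $w\cz'(w)=\sz(w)$ and $w\cz'(-iw)=\cz(w)$ are just a tidier bookkeeping of the same cancellations.
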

\begin{prf}
Let $w_1$ be satisfying  condition \eqref{2.6.1}. As observed before the proof of Lemma \ref{2.6}, the function $q_2$ is not singular at $z\cz(-iw_1)$. Furthermore,
\begin{eqnarray*}
\Res_{w=w_1} q_1\big(z\cz(w)\big)=\Res_{w=w_1} \frac{1}{i} \, \coth\big(\pi\big(\frac{z}{b_1}\cz(w)+i\rho_{\beta_1}\big)\big)
=\frac{1}{i}\, \frac{1}{\pi\frac{z}{b_1}\sz(w_1)w_1^{-1}}.
\end{eqnarray*}
So \eqref{2.7.1} follows from \eqref{eq:phiz}. Similarly,
\[
\Res_{w=w_2} q_2\big(z\cz(-iw)\big)=\frac{1}{\pi\frac{z}{b_2}\cz(w_2)w_2^{-1}}\,.
\]
So \eqref{2.7.2} follows, too.
\end{prf}

For $j\in\{1,2\}$ and $\ell\in\Bbb Z_{\geq 0}$ we define
\begin{equation}
\label{eq:Ljl}
L_{j,\ell}=b_j(\rho_{\beta_j}+\ell)\,.
\end{equation}
Under the conditions \eqref{2.6.1} and \eqref{2.6.2} there are $\epsilon_j=\pm 1$ and $\ell_j\in\Bbb Z_{\geq 0}$ such that
\begin{eqnarray}
\label{eq:cw1-eps-k1}
z\cz(w_1)&=&i\epsilon_1 b_1(\rho_{\beta_1}+\ell_1)\ =\ i\epsilon_1 L_{1,\ell_1}\,,\\
\label{eq:cw2-eps-k2}
z\cz(-iw_2)&=&i\epsilon_2 b_2(\rho_{\beta_2}+\ell_2)\ =\ i\epsilon_2 L_{2,\ell_2}\,.
\end{eqnarray}
If $0\neq z\in \C\setminus i \big((-\infty,-L_{j,\ell}]\cup [L_{j,\ell},+\infty)\big)$, then
$\frac{i b_j}{z} (\rho_{\beta_j}+\ell)\in \C\setminus [-1,1]$.
We can therefore uniquely define $w_1^\pm, w_2^\pm \in \Dg_1\setminus \{0\}$ satisfying
\begin{eqnarray}
\label{eq:cw1pm-eps-k1}
z\cz(w_1^\pm)&=&\pm i L_{1,\ell}\,,\\
\label{eq:cw2pm-eps-k2}
z\cz(-iw_2^\pm)&=&\pm i L_{2,\ell}\,.
\end{eqnarray}
Notice that, since the domain $\Eg_{\cz(r),\sz(r)}\setminus [-1,1]$ is symmetric with respect to the
origin $0\in \C$, the element $w_j^+$ occurs in the residue sum of Lemma \ref{2.6} if and only if $w_j^-$ does.


\begin{lem}
\label{lemma:Gjkj}
 For $j\in\{1,2\}$, $\ell\in \Zb_{\geq 0}$ and
 $0\neq z\in \C\setminus i \big((-\infty,-L_{j,\ell}]\cup [L_{j,\ell},+\infty)\big)$, with the notation
 introduced above, we have
\begin{equation}
\label{eq:Resphi-psi-wpm}
\Res_{w=w_j^+} \phi_z(w)=\Res_{w=w_j^-} \phi_z(w)
\qquad \text{and} \qquad
\psi_z(w_j^+)=\psi_z(w_j^-)\,.
\end{equation}
Define
\[
G_{j,\ell}(z)=\psi_z(w_j^+) \Res_{w=w_j^+}\phi_z(w)=\psi_z(w_j^-) \Res_{w=w_j^-}\phi_z(w).
\]
Then $G_{j,\ell}$ is holomorphic on $\C\setminus i \big((-\infty,-L_{j,\ell}]\cup \{0\}\cup [L_{j,\ell},+\infty)\big)$. Explicitly, if
\begin{equation}\label{constants}
C_{j,\ell}= \tfrac{b_j}{\pi}L_{j,\ell}\, p_j(iL_{j,\ell})
\end{equation}
then $C_{j,\ell}\neq 0$ and
\begin{eqnarray}
G_{1,\ell}(z)&=&C_{1,\ell}\,\psi_z\Big(\cz^{-1}\big(\tfrac{iL_{1,\ell}}{z}\big)\Big)
p_2\Big(iz (\sz\circ \cz^{-1})\big(\tfrac{iL_{1,\ell}}{z}\big)\Big) \, 
q_2\Big(iz (\sz\circ \cz^{-1})\big(\tfrac{iL_{1,\ell}}{z}\big)\Big)\,,
\label{eq:G1k}\\
G_{2,\ell}(z)&=&C_{2,\ell}\,
\psi_z\Big(i\cz^{-1}\big(\tfrac{iL_{2,\ell}}{z}\big)\Big) 
p_1\Big(iz (\sz\circ \cz^{-1})\big(\tfrac{iL_{2,\ell}}{z}\big)\Big)\,
q_1\Big(iz(\sz\circ \cz^{-1})\big(\tfrac{iL_{2,\ell}}{z}\big)\Big)\,.
\label{eq:G2k}
\end{eqnarray}
\end{lem}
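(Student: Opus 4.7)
The plan is to establish the lemma in four steps: the symmetry identities \eqref{eq:Resphi-psi-wpm}, the explicit formulas \eqref{eq:G1k}--\eqref{eq:G2k}, the non-vanishing of $C_{j,\ell}$, and the holomorphy statement, the last being the main point. For the symmetry, the key observation is that $\cz$ is odd, hence so is $\cz^{-1}$; combined with \eqref{eq:cw1pm-eps-k1}--\eqref{eq:cw2pm-eps-k2} this forces $w_j^-=-w_j^+$. The identity $\psi_z(w_j^+)=\psi_z(w_j^-)$ is then immediate from \eqref{eq:parity-psi}. For the residue identity, the relation $\phi_z(-w)=-\phi_z(w)$ from \eqref{eq:parity-phi} yields, near $-w_j^+$, a Laurent expansion with residue equal to $+\Res_{w=w_j^+}\phi_z(w)$, since the sign coming from $\phi_z(-w)=-\phi_z(w)$ and the sign coming from $-w-w_j^+=-(w+w_j^+)$ cancel.

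For the explicit formulas I would substitute $w_1^+=\cz^{-1}(iL_{1,\ell}/z)$ and $w_2^+=i\cz^{-1}(iL_{2,\ell}/z)$ into \eqref{2.7.1}--\eqref{2.7.2}, using the elementary identities $\cz(-iw)=-i\sz(w)$, $\cz(iw)=i\sz(w)$, together with $\sz(w)=(\sz\circ\cz^{-1})(\cz(w))$. The case $j=2$ is sign-free and gives \eqref{eq:G2k} at once. For $j=1$ one has $z\cz(-iw_1^+)=-iz\,(\sz\circ\cz^{-1})(iL_{1,\ell}/z)$; since $m_{\beta_2}$ is odd, Remark \ref{rem:parity-pq} ensures that $p_2$ is even and $q_2$ is odd, so the product $p_2 q_2$ picks up exactly one sign under $u\mapsto -u$, which combines with $\tfrac{ib_1}{\pi}(iL_{1,\ell})=-\tfrac{b_1 L_{1,\ell}}{\pi}$ to produce \eqref{eq:G1k} with the stated $C_{1,\ell}$.

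To verify $C_{j,\ell}\neq 0$, I would rewrite $p_j(iL_{j,\ell})=P_j(-(\rho_{\beta_j}+\ell))=P_j(\rho_{\beta_j}+\ell)$ by evenness of $P_j$ when $m_{\beta_j}$ is odd (Remark \ref{rem:parity-pq}). The zeros of the factors appearing in the formula for $P_j$ in Section \ref{subsection:rankone} are the integers in $[1-\rho_{\beta_j},\rho_{\beta_j}-1]$ together with the arithmetic progression in $[-m_{\beta_j/2}/4+1/2,\,m_{\beta_j/2}/4-1/2]$. Since $\rho_{\beta_j}=\tfrac{m_{\beta_j}}{2}+\tfrac{m_{\beta_j/2}}{4}$ with $m_{\beta_j}\geq 1$, the value $\rho_{\beta_j}+\ell$ strictly exceeds both $\rho_{\beta_j}-1$ and $m_{\beta_j/2}/4-1/2$ for every $\ell\in\Zb_{\geq 0}$, and is therefore avoided.

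The main obstacle is the holomorphy claim, because \eqref{eq:G1k} contains the meromorphic factor $q_2$ (and \eqref{eq:G2k} contains $q_1$). The product $p_j q_j$ is holomorphic off $S_j$ from \eqref{eq:Sj}, while every remaining factor is holomorphic on the stated domain: $\cz^{-1}(iL_{j,\ell}/z)$ is holomorphic exactly where $iL_{j,\ell}/z\notin[-1,1]$, and $\psi_z(w)$ is jointly holomorphic on $\C\times(\C\setminus\{0\})$ through the entirety of the Helgason--Fourier transform in the spectral parameter. It therefore suffices to rule out $iz\,(\sz\circ\cz^{-1})(iL_{1,\ell}/z)\in S_2$ on the domain. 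Using \eqref{eq:scinverse}, which gives $(\sz\circ\cz^{-1})(u)^2=u^2-1$, one computes
\[
\bigl[iz\,(\sz\circ\cz^{-1})(iL_{1,\ell}/z)\bigr]^2=-z^2\bigl((iL_{1,\ell}/z)^2-1\bigr)=L_{1,\ell}^2+z^2.
\]
Setting this equal to $(\pm iL_{2,m})^2=-L_{2,m}^2$ for some $m\in\Zb_{\geq 0}$ forces $z=\pm i\sqrt{L_{1,\ell}^2+L_{2,m}^2}\in i\bigl((-\infty,-L_{1,\ell}]\cup[L_{1,\ell},+\infty)\bigr)$, which is excluded. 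The argument for $G_{2,\ell}$ is symmetric.
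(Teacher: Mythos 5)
Your proof is correct and follows essentially the same route as the paper: use the parity identities from Remark \ref{rem:parity-pq} and \eqref{eq:parity-psi}--\eqref{eq:parity-phi} to get $w_j^-=-w_j^+$ and the equality of residues, then substitute $w_1^+=\cz^{-1}(iL_{1,\ell}/z)$, $w_2^+=i\cz^{-1}(iL_{2,\ell}/z)$ into the formulas of Lemma \ref{2.7}. Two small differences worth noting: you derive the residue equality from the parity of $\phi_z$ directly (two sign flips cancelling), whereas the paper derives it by plugging the flipped arguments into the explicit residue formulas and invoking parity of $p_j$ and $p_jq_j$; and you supply the verification that $C_{j,\ell}\neq 0$ and the holomorphy on the stated slit plane (via $[iz(\sz\circ\cz^{-1})(iL_{1,\ell}/z)]^2=L_{1,\ell}^2+z^2$), both of which the paper's proof of this lemma leaves implicit, the former being established later in the proof of Lemma \ref{lem:chart-expressions}.
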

\begin{prf}
Recall from Remark \ref{rem:parity-pq} that if $m_{\beta_j}$ is odd, then the polynomial $p_j$ is even and the product $p_jq_j$ is odd.
We see from \eqref{eq:cw1pm-eps-k1} 
that $\cz(w_1^+)=-\cz(w_1^-)=\cz(-w_1^-)$. So $w_1^+=-w_1^-$. Similarly \eqref{eq:cw2pm-eps-k2}  yields $w_2^+=-w_2^-$. Thus, for $j=1,2$,
\begin{equation}
\label{eq:c-minus-i-w-pm}
\cz(w_j^-)=-\cz(w_j^+) \qquad\text{and}\qquad \cz(-iw_j^-)=-\cz(-iw_j^+)\,.
\end{equation}
The first equality in \eqref{eq:Resphi-psi-wpm} follows then from the formulas in Lemma \ref{2.7}. The second one is a consequence of the definition \eqref{eq:psiz} and the fact the spherical functions are even in the spectral parameter.

\enlargethispage*{3mm}

The explicit formulas for $G_{1,\ell}$ and $G_{2,\ell}$ are obtained from Lemma \ref{2.7}, \eqref{eq:cw1pm-eps-k1} and \eqref{eq:cw2pm-eps-k2}, as well as from
\[
\cz(-iw_1^+)=-i(\sz \circ\cz^{-1})\big(i\frac{L_{1,\ell}}{z}\big)
\qquad \text{and} \qquad
\cz(w_2^+)=i(\sz \circ\cz^{-1})\big(i\frac{L_{2,\ell}}{z}\big)\,.
\]
\end{prf}

Recall the sets $S_j$ ($j=1,2$) from \eqref{eq:Sj}.

\begin{pro}
\label{prop:SjrzW}
For $j=1,2$, for $0<r<1$ and $z\in \C\setminus i((-\infty,- L]\cup[L, +\infty))$, define
\begin{equation}
\label{eq:Sjrz}
\Sg_{j,r,z,\pm}=\{\ell\in \Zb_{\geq 0}:\pm i L_{j,\ell}\in z\big(\Eg_{\cz(r),\sz(r)}\setminus [-1,1]\big)\}\,.
\end{equation}
Let $W \subseteq \C$ be a connected open set such that
\begin{equation}
\label{eq:intersection-W}
(S_1\cup S_2) \cap W \partial \Eg_{\cz(r),\sz(r)}=\emptyset\,.
\end{equation}
Then
\begin{equation}
\label{eq:SjrzW}
S_{j,r,z,\pm}=\{\ell\in \Zb_{\geq 0}:\pm i L_{j,\ell}\in W\Eg_{\cz(r),\sz(r)}\}
\qquad (z \in W\setminus i\R)\,.
\end{equation}
and hence $S_{j,r,z,\pm}=S_{j,r,W,\pm}$ does not depend on $z \in W\setminus i\R$.
Moreover,
\begin{equation}
\label{eq:F-residues-r}
F(z)=F_r(z)+2\pi i \, G_r(z)\qquad (z \in W\setminus i\R),
\end{equation}
where
\begin{equation}
\label{eq:Gr-Sr}
G_r(z)=\sum_{\ell_1\in S_{1,r,W,+}\cup S_{1,r,W,-}} G_{1,\ell_1}(z) + \sum_{\ell_2\in S_{2,r,W,+}\cup S_{2,r,W,-}} G_{2,\ell_2}(z)\,.
\end{equation}
\end{pro}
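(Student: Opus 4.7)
The plan is to piece together Proposition \ref{2.5}, Lemma \ref{2.6} and Lemma \ref{lemma:Gjkj}. First I would verify the decomposition \eqref{eq:F-residues-r}: the hypothesis \eqref{eq:intersection-W} forces the pointwise condition \eqref{2.5.1} to hold at every $z\in W$, so Proposition \ref{2.5} applies at each such $z$ and yields $F(z)=F_r(z)+2\pi i\,G_r(z)$ on $W\setminus i\R$, where $G_r(z)={\sum}_{w_0}'\psi_z(w_0)\Res_{w=w_0}\phi_z(w)$.

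Next, I would split $G_r(z)$ into a $j=1$ and a $j=2$ piece using Lemma \ref{2.6}. By \eqref{eq:cw1-eps-k1} and \eqref{eq:cw1pm-eps-k1}, the points $w_1$ satisfying \eqref{2.6.1} are exactly the $w_1^{\pm}(\ell_1)$ with $\ell_1$ ranging over $\Sg_{1,r,z,\pm}$; Lemma \ref{lemma:Gjkj} then identifies each such contribution as $G_{1,\ell_1}(z)$. The $j=2$ piece is handled analogously via \eqref{2.6.2} and \eqref{eq:cw2pm-eps-k2}. This produces \eqref{eq:Gr-Sr} but with $\Sg_{j,r,z,\pm}$ in place of $\Sg_{j,r,W,\pm}$.

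The main task, and the crux of \eqref{eq:SjrzW}, is therefore to establish that the index sets $\Sg_{j,r,z,\pm}$ are independent of $z\in W\setminus i\R$. The key topological observation I would use is that for each fixed $\ell\in\Zb_{\geq 0}$ and choice of sign, the subset
\[
U_{\ell}^{\pm}:=\{z\in W:\pm i L_{j,\ell}\in z\Eg_{\cz(r),\sz(r)}\}
\]
is both open and closed in $W$. Openness is immediate from the openness of $\Eg_{\cz(r),\sz(r)}$ and the continuity of $z\mapsto \pm iL_{j,\ell}/z$ on $W$. Its boundary in $W$ would consist of points $z_0$ satisfying $\pm i L_{j,\ell}\in z_0\partial\Eg_{\cz(r),\sz(r)}$; since $\pm i L_{j,\ell}\in S_j$, the assumption \eqref{eq:intersection-W} rules such $z_0$ out, whence $U_{\ell}^{\pm}$ is also closed in $W$. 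Connectedness of $W$ then forces $U_{\ell}^{\pm}=\emptyset$ or $U_{\ell}^{\pm}=W$, so membership of $\ell$ does not depend on $z\in W$.

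Finally, I would reconcile the definition of $\Sg_{j,r,z,\pm}$ (which uses $\Eg_{\cz(r),\sz(r)}\setminus[-1,1]$) with the set appearing in \eqref{eq:SjrzW} (which uses the full ellipse): for $z\in W\setminus i\R$ the element $\pm iL_{j,\ell}/z$ is non-real and therefore cannot lie in $[-1,1]$, so the two conditions coincide. Combined with the previous paragraph this yields \eqref{eq:SjrzW} and identifies $\Sg_{j,r,z,\pm}$ with the $z$-independent set $\Sg_{j,r,W,\pm}$, completing \eqref{eq:Gr-Sr}.
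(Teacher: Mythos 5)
Your proposal is correct and follows essentially the same route as the paper: the central step is the identical clopen argument showing that each set $U_\ell^\pm=\{z\in W:\pm iL_{j,\ell}\in z\Eg_{\cz(r),\sz(r)}\}$ is open and closed in the connected set $W$ (openness from continuity, closedness from \eqref{eq:intersection-W}), hence empty or all of $W$, followed by the observation that removing $[-1,1]$ is harmless when $z\notin i\R$. The only difference from the paper is the order of presentation (you establish \eqref{eq:F-residues-r} first and then prove $z$-independence of the index sets, whereas the paper proves \eqref{eq:SjrzW} first), which is purely cosmetic.
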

\begin{prf}
Let $j=1,2$. The condition \eqref{eq:intersection-W} implies that for any fixed $\ell\in \Zb_{\geq 0}$ and $\epsilon\in \{\pm 1\}$, the set
$\{z\in W: \epsilon iL_{j,\ell}\in z\Eg_{\cz(r),\sz(r)}\}$ is open and closed in $W$. Since $W$ is connected, it is either $\emptyset$ or $W$. Hence, for every $z\in W$, we have
\[
\epsilon i b_j(\rho_{\beta_j}+\Zb_{\geq 0}) \cap
z\Eg_{\cz(r),\sz(r)}=
\epsilon i b_j(\rho_{\beta_j}+\Zb_{\geq 0}) \cap
W\Eg_{\cz(r),\sz(r)}\,.
\]
Moreover, if $z\in \C\setminus i\R$, then
\[
i\epsilon b_j(\rho_{\beta_j}+\Zb_{\geq 0}) \cap
z\Eg_{\cz(r),\sz(r)}=i\epsilon b_j(\rho_{\beta_j}+\Zb_{\geq 0}) \cap  z(\Eg_{\cz(r),\sz(r)}\setminus [-1,1])\,.
\]
Hence, for $z\in W\setminus i\R$, the equality \eqref{eq:SjrzW} holds and thus
$S_{j,r,z,\pm}$ depends on $W$ but not on $z$.

Notice that $w_1$ and $w_2$ respectively satisfy \eqref{eq:cw1-eps-k1} and \eqref{eq:cw2-eps-k2} if and only if
\begin{eqnarray*}
&&w_1=\cz^{-1}\big(i \epsilon_1 \tfrac{L_{1,\ell_1}}{z}\big) \quad \text{and} \quad \ell_1\in S_{1,r,z,+}\cup S_{1,r,z,-}\,,\\
&&w_2=i\cz^{-1}\big(i \epsilon_2 \tfrac{L_{2,\ell_2}}{z}\big) \quad \text{and} \quad \ell_2\in S_{2,r,z,+}\cup S_{2,r,z,-}\,.
\end{eqnarray*}
Formulas \eqref{eq:F-residues-r} and \eqref{eq:Gr-Sr} follow then from Lemmas \ref{2.6} and
\ref{lemma:Gjkj}.
\end{prf}

\begin{cor}
\label{cor:SjrvW}
For every $iv \in i\R$ and for every $r$ with $0<r<1$ and $v\c(r)\notin i\big(S_1\cup S_2\big)$
there is a connected open neighborhood $W_v$ of $iv$ in $\C$ satisfying the following conditions.
\begin{enumerate}
\item
$(S_1\cup S_2) \cap W_v \partial \Eg_{\cz(r),\sz(r)}=\emptyset$\,,
\item
$S_{j,r,W_v,+}=\{ \ell\in \Zb_{\geq 0}:i L_{j,\ell}\in iv\Eg_{\cz(r),\sz(r)}\}$\,,
\item
$S_{j,r,W_v,+}=S_{j,r,W_v,-}$\,,
\item
the equality \eqref{eq:F-residues-r} holds for $z\in W_v\setminus i\R$ with
\begin{equation}
\label{eq:Gr-Sr-i}
G_r(z)=2\sum_{\ell_1\in S_{1,r,W_v,+}} G_{1,\ell_1}(z) + 2\sum_{\ell_2\in S_{2,r,W_v,+}} G_{2,\ell_2}(z)\,.
\end{equation}
\end{enumerate}
\end{cor}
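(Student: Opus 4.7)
The plan is to construct $W_v$ as a small open ball centered at $iv$ and then to verify (1)--(4) using the discrete nature of $S_1\cup S_2$, the central symmetry of the ellipse, and the residue identities already established in Lemmas \ref{2.6} and \ref{lemma:Gjkj}.

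For (1), parametrizing $\partial\Eg_{\cz(r),\sz(r)}$ by $t\mapsto \cz(r)\cos t+i\sz(r)\sin t$ and multiplying by $iv$ shows that the curve $iv\,\partial\Eg_{\cz(r),\sz(r)}$ meets the imaginary axis precisely at the two points $\pm iv\cz(r)$, with every other point strictly off the imaginary axis. Since $S_1\cup S_2\subset i\R$ is closed and discrete, the hypothesis $v\cz(r)\notin i(S_1\cup S_2)$ translates to $\pm iv\cz(r)\notin S_1\cup S_2$, so the compact curve $iv\,\partial\Eg_{\cz(r),\sz(r)}$ sits at positive distance from $S_1\cup S_2$. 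By joint continuity of multiplication and compactness of $\partial\Eg_{\cz(r),\sz(r)}$, there exists $\delta>0$ such that $W_v:=\{z\in\C:|z-iv|<\delta\}$ satisfies $W_v\,\partial\Eg_{\cz(r),\sz(r)}\cap(S_1\cup S_2)=\emptyset$; by further shrinking $\delta$ we may also arrange $W_v\subset\C\setminus i((-\infty,-L]\cup[L,+\infty))$, thereby securing (1) and placing $W_v$ under the hypotheses of Proposition \ref{prop:SjrzW}.

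Conditions (2) and (3) then follow quickly. The connectedness argument used in the proof of Proposition \ref{prop:SjrzW} shows that for each $j,\ell,\varepsilon$ the set $\{z\in W_v:\varepsilon iL_{j,\ell}\in z\,\Eg_{\cz(r),\sz(r)}\}$ is open and closed in the connected set $W_v$, hence either empty or all of $W_v$; evaluating at $z=iv$ identifies $S_{j,r,W_v,+}$ with the set in (2). For (3), the central symmetry $\Eg_{\cz(r),\sz(r)}=-\Eg_{\cz(r),\sz(r)}$ propagates to $W_v\,\Eg_{\cz(r),\sz(r)}=-W_v\,\Eg_{\cz(r),\sz(r)}$, which immediately yields $S_{j,r,W_v,+}=S_{j,r,W_v,-}$.

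Finally, for (4) I would return to the raw residue sum in Lemma \ref{2.6} rather than quote the summation convention in Proposition \ref{prop:SjrzW} verbatim. By (3), for each $\ell\in S_{j,r,W_v,+}$ both points $+iL_{j,\ell}$ and $-iL_{j,\ell}$ lie in $z(\Eg_{\cz(r),\sz(r)}\setminus[-1,1])$ for every $z\in W_v\setminus i\R$, so two distinct points $w_j^\pm$ appear in the corresponding residue sum; by Lemma \ref{lemma:Gjkj} their residue contributions coincide with the common value $G_{j,\ell}(z)$. Collecting these paired contributions over $\ell\in S_{j,r,W_v,+}$ produces the factor of $2$ in \eqref{eq:Gr-Sr-i} and establishes (4). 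I expect the only real subtlety to be the construction of $W_v$ in the first step---keeping it small enough for (1) while genuinely a neighborhood of $iv$---after which the remaining verifications reduce to the central symmetry of the ellipse and the common-residue identity of Lemma \ref{lemma:Gjkj}.
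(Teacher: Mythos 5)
Your construction and verification of (1)--(4) follow the same path as the paper's proof: discreteness of $S_1\cup S_2$ combined with compactness of $iv\,\partial\Eg_{\cz(r),\sz(r)}$ for (1); the open-closed connectedness argument from Proposition \ref{prop:SjrzW}, evaluated at $z=iv$, for (2); central symmetry of the ellipse for (3); and the paired residue contributions of $w_j^\pm$ from Lemma \ref{lemma:Gjkj} for the factor $2$ in (4). One clause should be removed: ``by further shrinking $\delta$ we may also arrange $W_v\subset\C\setminus i((-\infty,-L]\cup[L,+\infty))$'' is both unnecessary and, for $|v|\geq L$, impossible, since $iv\in W_v$ would then lie on the excluded half-lines. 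Proposition \ref{prop:SjrzW} only requires that $W$ be a connected open subset of $\C$ satisfying $(S_1\cup S_2)\cap W\,\partial\Eg_{\cz(r),\sz(r)}=\emptyset$; its conclusion concerns $z\in W\setminus i\R$, and such $z$ automatically avoid $i((-\infty,-L]\cup[L,+\infty))\subset i\R$. Drop that shrinking step and the argument is correct for all $v\in\R$, as the corollary requires. Incidentally, your version of (3) is slightly cleaner than the paper's: since $\Eg_{\cz(r),\sz(r)}=-\Eg_{\cz(r),\sz(r)}$, the product set $W_v\,\Eg_{\cz(r),\sz(r)}$ is centrally symmetric for any $W_v$, so the paper's remark that ``$W_v$ can be chosen so that this property is preserved'' is in fact automatic.
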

\begin{proof}
Clearly,
\begin{equation}
\label{eq:Sjr-iv}
(S_1\cup S_2) \cap iv\partial \Eg_{\cz(r),\sz(r)}=\emptyset\,.
\end{equation}
Hence, we may ``enlarge" the point $iv$ to a connected open neighborhood $W_v$ of $iv$ satisfying \eqref{eq:intersection-W}.

If $z\in W_v\setminus i\R$ is sufficiently close to $iv$, we have
\[
\{ \ell\in \Zb_{\geq 0}:i L_{j,\ell}\in iv\Eg_{\cz(r),\sz(r)}\}=
\{ \ell\in \Zb_{\geq 0}:i L_{j,\ell}\in z\Eg_{\cz(r),\sz(r)}\}\,,
\]
and, by Proposition \ref{prop:SjrzW},  the set on the right-hand side of this equality is $S_{j,r,W_v,+}$.

Observe that, since the domain  $iv\big( \Eg_{\cz(r),\sz(r)}\setminus [-1,1]\big)$ is symmetric  with respect to the origin $0\in \C$, we have $i\tfrac{L_{j,\ell}}{z} \in iv \big( \Eg_{\cz(r),\sz(r)}\setminus [-1,1]\big)$ if and only if $-i\tfrac{L_{j,\ell}}{z} \in iv \big( \Eg_{\cz(r),\sz(r)}\setminus [-1,1]\big)$. The open neighborhood $W_v$ can be chosen so that this property is preserved when $iv$ is replaced by $W_v$. Hence $S_{j,r,W_v,+} =S_{j,r,W_v,-}$. Thus \eqref{eq:Gr-Sr-i} follows from \eqref{eq:Gr-Sr}.
\end{proof}

The following lemma allows us to make explicit the sets $S_{j,r,W_v,+}$ appearing in
\eqref{eq:Gr-Sr-i}. Recall that $\floor{x}$ denotes the largest integer not greater than $x\in \R$.

\begin{lem}
\label{lem:upper-index}
Let $j=1,2$ be fixed and let $v \in \R^+$ be such that $v\geq b_j\rho_{\beta_j}$. Select an
$0<r<1$ satisfying \eqref{eq:Sjr-iv} and define
\begin{eqnarray}
\label{eq:Sjrv}
S_{j,r,v}&=&\{\ell \in \Zb_{\geq 0}; i L_{j,\ell}\in iv \Eg_{\cz(r),\sz(r)}\}  \nn \\
&=&\{\ell \in \Zb_{\geq 0}; \rho_{\beta_j}+\ell\in \nu_j \Eg_{\cz(r),\sz(r)}\}\,,
\end{eqnarray}
where $\nu_j=\frac{v}{b_j}$.
Then the following properties hold.
\begin{enumerate}
\item
$S_{j,r,v}=\{0,1,\dots, N_{j,v}\}$ for some $N_{j,v}\in \Zb_{\geq 0}$.
\item
If $r$ is chosen so that
\begin{equation}
\label{eq:first condition on r}
\cz(r)< \frac{\floor{\nu_j}+1}{\nu_j}
\end{equation}
(or even $\cz(r)< \frac{\floor{\nu_j}+\frac{3}{2}}{\nu_j}$ if $\rho_{\beta_j} \in \Zb+\frac{1}{2}$),  then
$N_{j,v}\leq \floor{\nu_j}-\floor{\rho_{\beta_j}}$.
\item
Suppose that $\rho_{\beta_j} \in \Zb$. Then $N_{j,v}\geq \floor{\nu_j}-\rho_{\beta_j}$.
Thus
\[
N_{j,v}= \floor{\nu_j}-\rho_{\beta_j}
\]
if \eqref{eq:first condition on r} is satisfied.
\item
Suppose that $\rho_{\beta_j} \in \Zb+\frac{1}{2}$. 
\newline
\noindent
If $\floor{\nu_j}+\frac{1}{2}\leq\nu_j$, 
then $N_{j,v}\geq  \floor{\nu_j}-\floor{\rho_{\beta_j}}$.
Hence
\[
N_{j,v}= \floor{\nu_j}-\floor{\rho_{\beta_j}}
\]
if \eqref{eq:first condition on r} is satisfied.
\newline
\noindent
If $\floor{\nu_j}+\frac{1}{2}>\nu_j$ and
$\cz(r)< \frac{\floor{\nu_j}+\frac{1}{2}}{\nu_j}$, then
\[
N_{j,v}= \floor{\nu_j}-\floor{\rho_{\beta_j}}-1\,.
\]
\end{enumerate}
\end{lem}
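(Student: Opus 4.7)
The plan is to reduce the membership in $S_{j,r,v}$ to a single real inequality and then to distinguish the two possible arithmetic types of $\rho_{\beta_j}$, which by Remark \ref{rem:classification} lies either in $\Zb$ or in $\Zb+\tfrac{1}{2}$.

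Unpacking the definitions, $iv\Eg_{\cz(r),\sz(r)}$ is the interior of an ellipse with semi-axis $v\sz(r)$ in the real direction and $v\cz(r)$ in the imaginary direction, so the purely imaginary point $iL_{j,\ell}$ lies in it iff $L_{j,\ell}<v\cz(r)$. Dividing by $b_j$, one obtains
\[
S_{j,r,v}=\{\ell\in\Zb_{\geq 0}:\rho_{\beta_j}+\ell<\nu_j\cz(r)\}\,.
\]
The selection of $r$ satisfying \eqref{eq:Sjr-iv} guarantees that equality cannot occur, so strict inequality is the correct condition. Since $\cz(r)>1$ and $\nu_j\geq\rho_{\beta_j}$ by hypothesis, $\ell=0$ always satisfies this inequality, and the solution set is therefore of the form $\{0,1,\dots,N_{j,v}\}$ with $N_{j,v}\in\Zb_{\geq 0}$. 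This proves (1).

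For the upper bound in (2), the hypothesis \eqref{eq:first condition on r} gives $\nu_j\cz(r)<\floor{\nu_j}+1$ in the integer case and, by the supplementary assumption, $\nu_j\cz(r)<\floor{\nu_j}+\tfrac{3}{2}$ in the half-integer case. Since $\rho_{\beta_j}+\ell$ belongs to $\Zb$ or to $\Zb+\tfrac{1}{2}$ accordingly, these translate to $\rho_{\beta_j}+\ell\leq\floor{\nu_j}$ and $\rho_{\beta_j}+\ell\leq\floor{\nu_j}+\tfrac{1}{2}$, respectively; both read $\ell\leq\floor{\nu_j}-\floor{\rho_{\beta_j}}$. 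For the matching lower bounds in (3) and (4) first subcase, I would substitute the candidate extremal $\ell$ into $\rho_{\beta_j}+\ell$ and check that the result is at most $\nu_j$, hence strictly below $\nu_j\cz(r)$: in (3), $\ell=\floor{\nu_j}-\rho_{\beta_j}$ gives $\rho_{\beta_j}+\ell=\floor{\nu_j}\leq\nu_j$; in (4) when $\floor{\nu_j}+\tfrac{1}{2}\leq\nu_j$, the choice $\ell=\floor{\nu_j}-\floor{\rho_{\beta_j}}$ gives $\rho_{\beta_j}+\ell=\floor{\nu_j}+\tfrac{1}{2}\leq\nu_j$ by assumption.

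For the remaining subcase of (4), when $\floor{\nu_j}+\tfrac{1}{2}>\nu_j$, the sharper hypothesis $\cz(r)<(\floor{\nu_j}+\tfrac{1}{2})/\nu_j$ forces $\rho_{\beta_j}+\ell<\floor{\nu_j}+\tfrac{1}{2}$, hence $\rho_{\beta_j}+\ell\leq\floor{\nu_j}-\tfrac{1}{2}$ and $\ell\leq\floor{\nu_j}-\floor{\rho_{\beta_j}}-1$; conversely, $\ell=\floor{\nu_j}-\floor{\rho_{\beta_j}}-1$ satisfies $\rho_{\beta_j}+\ell=\floor{\nu_j}-\tfrac{1}{2}<\nu_j$, and this integer is nonnegative because the assumption $\floor{\nu_j}+\tfrac{1}{2}>\nu_j\geq\rho_{\beta_j}=\floor{\rho_{\beta_j}}+\tfrac{1}{2}$ forces $\floor{\nu_j}\geq\floor{\rho_{\beta_j}}+1$. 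The argument is entirely elementary, with no real obstacle; the only point of care is keeping track of half-units when switching between $\ell$ and $\rho_{\beta_j}+\ell$, which is precisely why the statement must distinguish four subcases instead of a single uniform formula and why the hypothesis on $\cz(r)$ is strengthened in the half-integer case.
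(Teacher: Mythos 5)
Your proof is correct and follows essentially the same approach as the paper: reduce membership in $S_{j,r,v}$ to the single scalar inequality $\rho_{\beta_j}+\ell < \nu_j\cz(r)$, observe monotonicity to get (1), and then handle the integer and half-integer cases of $\rho_{\beta_j}$ by comparing $\rho_{\beta_j}+\ell$ to $\floor{\nu_j}$ in units of $\tfrac{1}{2}$. The only (welcome) addition is that you explicitly check $\floor{\nu_j}-\floor{\rho_{\beta_j}}-1\geq 0$ in the last subcase of (4), which the paper leaves implicit.
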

\begin{prf}
To simplify notation, in this proof we shall omit the indices $j$.

Notice first that, for $u\in \R^+$, we have that $iu \in iv \Eg_{\cz(r),\sz(r)}$ if and only if $u<\cz(r) v$, and $iu \notin iv \overline{\Eg_{\cz(r),\sz(r)}}$ if and only if $u>\cz(r) v$.

In particular, since $v\geq b\rho_\beta$ and $\cz(r)\geq 1$, we have
$\cz(r) \nu > \nu \geq \rho_\beta$. Hence $0\in S_{j,r,v}$. Since
$iv \Eg_{\cz(r),\sz(r)}$ is a bounded convex set containing $0\in \C$, the set $S_{j,r,v}$ is of the
form given in (1) for some $N_v\in \Zb_{\geq 0}$.

Observe also that $v\geq b\rho_\beta$ implies $\nu\geq \rho_\beta\geq \floor{\rho_\beta}$.
So $\floor{\nu}\geq \floor{\rho_\beta}$ and $\floor{\nu}-\floor{\rho_\beta}\in \Zb_{\geq 0}$.

The inequality $N_{v}\leq \floor{\nu}-\floor{\rho_{\beta}}$, i.e. $\floor{\nu}-\floor{\rho_{\beta}}+1 \notin S_{j,r,v}$, is equivalent to
$\rho_{\beta}+(\floor{\nu}-\floor{\rho_{\beta}}+1)\notin \nu \Eg_{\cz(r),\sz(r)}$, i.e.
$\rho_{\beta}+(\floor{\nu}-\floor{\rho_{\beta}}+1)\geq \nu \cz(r)$.

We have
\[
\rho_{\beta}+(\floor{\nu}-\floor{\rho_{\beta}}+1)=
\begin{cases}
\floor{\nu}+\frac{1}{2}+1 &\text{if $\rho_\beta\in \Zb+\frac{1}{2}$}\\
\floor{\nu}+1 &\text{if $\rho_\beta\in \Zb$}\,.
\end{cases}
\]
In both cases,
\[
\rho_{\beta}+(\floor{\nu}-\floor{\rho_\beta}+1)\geq \floor{\nu}+1 > \cz(r_v) \nu
\]
if \eqref{eq:first condition on r} holds. This proves (2).

Suppose now that $\rho_\beta\in \Zb$. Then
\[
\rho_{\beta}+(\floor{\nu}-\rho_\beta)=\floor{\nu}\leq \nu < \cz(r) \nu\,.
\]
Hence $\floor{\nu}-\rho_{\beta}\in S_{j,r,v}$, i.e. $N_v\geq \floor{\nu}-\rho_{\beta}$.
The equality (3) follows then from (2).

Finally, suppose $\rho_\beta\in \Zb+\frac{1}{2}$.

If $\floor{\nu}+\frac{1}{2}\leq \nu$, then
\[
\rho_{\beta}+(\floor{\nu}-\floor{\rho_\beta})=\floor{\nu}+\frac{1}{2}\leq \nu < \cz(r) \nu\,.
\]
Hence $\floor{\nu}-\floor{\rho_{\beta}}\in S_{j,r,v}$ and the first claim of (4)
follows then from (2).

If $\floor{\nu}+\frac{1}{2}> \nu$ and
$\cz(r)< \frac{\floor{\nu_j}+\frac{1}{2}}{\nu_j}$. Then
\[
\rho_{\beta}+(\floor{\nu}-\floor{\rho_\beta})=\floor{\nu}+\frac{1}{2}> \cz(r) \nu\,.
\]
Hence $\floor{\nu}-\floor{\rho_{\beta}} > N_v$.
On the other hand,
\[
\rho_{\beta}+(\floor{\nu}-\floor{\rho_\beta}-1)=\floor{\nu}-\frac{1}{2}< \nu <\cz(r) \nu\,.
\]
So $\floor{\nu}-\floor{\rho_{\beta}}-1\geq N_v$, and the equality follows.
\end{prf}

\begin{cor}\label{cor-SjrvW'}
Keep the notation introduced above, and set for $j\in \{1,2\}$ and $m\in \Zb_{\geq 0}$:
\begin{equation}
\label{eq:Ijm}
I_{j,m}=b_j\rho_{\beta_j}+b_j[m,m+1)=[L_{j,m},L_{j,m+1})\,.
\end{equation}
Suppose that $v \in I_{j,m}$. 
Let $r_m$ be such that $0<r_m<1$ and $c(r_m)=\frac{\rho_{\beta_j}+m +\frac{1}{2}}{\rho_{\beta_j}+m}$.
Then
\[
\sum_{\ell\in S_{j,r,W_v,+}} G_{j,\ell}(z)=\sum_{\ell=0}^m G_{j,\ell}(z) \qquad (r_m<r<1,\ z\in W_v\setminus i\R)\,.
\]
\end{cor}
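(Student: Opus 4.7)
The strategy is a direct combination of Corollary \ref{cor:SjrvW} and Lemma \ref{lem:upper-index}. By part (2) of Corollary \ref{cor:SjrvW} the index set $S_{j,r,W_v,+}$ equals $\{\ell\in\Zb_{\geq 0}: iL_{j,\ell}\in iv\Eg_{\cz(r),\sz(r)}\}$, which is precisely the set $S_{j,r,v}$ from Lemma \ref{lem:upper-index}. Since the sum on the left-hand side of the claim is indexed by this set, it suffices to prove that $S_{j,r,v}=\{0,1,\dots,m\}$ under the stated hypotheses on $v$ and $r$.

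The hypothesis $v\in I_{j,m}=[L_{j,m},L_{j,m+1})$ gives $v\geq L_{j,m}\geq L_{j,0}=b_j\rho_{\beta_j}$, placing us in the setting of Lemma \ref{lem:upper-index}. Part (1) of that lemma yields $N_{j,v}\in\Zb_{\geq 0}$ with $S_{j,r,v}=\{0,1,\dots,N_{j,v}\}$, so the task reduces to establishing $N_{j,v}=m$. Writing $\nu_j=v/b_j\in[\rho_{\beta_j}+m,\rho_{\beta_j}+m+1)$, I would then split the argument according to the parity of $2\rho_{\beta_j}$, mirroring the case distinction in parts (3) and (4) of the lemma.

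In the integer case $\rho_{\beta_j}\in\Zb$ one has $\lfloor\nu_j\rfloor=\rho_{\beta_j}+m$, so part (3) of the lemma delivers $N_{j,v}\geq m$, while the matching upper bound $N_{j,v}\leq m$ comes from part (2) provided the inequality $\cz(r)<\frac{\lfloor\nu_j\rfloor+1}{\nu_j}=\frac{\rho_{\beta_j}+m+1}{\nu_j}$ holds. The calibration $\cz(r_m)=\frac{\rho_{\beta_j}+m+1/2}{\rho_{\beta_j}+m}$ together with $r>r_m$ (so that $\cz(r)<\cz(r_m)$ by monotonicity of $\cz$ on $(0,1)$) is designed precisely to secure this inequality via an elementary computation using $\nu_j\in[\rho_{\beta_j}+m,\rho_{\beta_j}+m+1)$. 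In the half-integer case $\rho_{\beta_j}\in\Zb+\tfrac12$ one invokes part (4) of the lemma, splitting according to whether $\lfloor\nu_j\rfloor+\tfrac12\leq\nu_j$ or not; in each subcase the bound $\cz(r)<\cz(r_m)$ translates into the appropriate variant inequality of (4) and again gives $N_{j,v}=m$. Substituting $S_{j,r,W_v,+}=\{0,1,\dots,m\}$ into the sum then completes the proof. The main obstacle will be the case-by-case numerical bookkeeping in the half-integer case, where the relevant branch of Lemma \ref{lem:upper-index}(4) switches at $\nu_j=\rho_{\beta_j}+m+\tfrac12$, and the specific shape of $\cz(r_m)$ has to be checked to be compatible with both subcases.
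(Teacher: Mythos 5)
Your overall strategy is the same as the paper's: invoke Corollary~\ref{cor:SjrvW}(2) to identify $S_{j,r,W_v,+}$ with $S_{j,r,v}$, and then use Lemma~\ref{lem:upper-index} to pin down $N_{j,v}=m$. The gap is in the "elementary computation" you defer to. You claim that the calibration $\cz(r_m)=\frac{\rho_{\beta_j}+m+1/2}{\rho_{\beta_j}+m}$ together with $\cz(r)<\cz(r_m)$ secures $\cz(r)<\frac{\lfloor\nu_j\rfloor+1}{\nu_j}$. This is false for part of the interval. In the integer case $\rho_{\beta_j}\in\Zb$ (which does occur, e.g.\ $\SU(2n,1)$), one has $\lfloor\nu_j\rfloor+1=\rho_{\beta_j}+m+1$ and hence needs $\cz(r)<\frac{\rho_{\beta_j}+m+1}{\nu_j}$, but the infimum of the right-hand side over $\nu_j\in[\rho_{\beta_j}+m,\rho_{\beta_j}+m+1)$ is $1$, whereas $\cz(r)$ can be as large as (anything below) $\cz(r_m)>1$. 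Concretely, setting $a=\rho_{\beta_j}+m$, the inequality $\cz(r_m)\le\frac{a+1}{\nu_j}$ reduces to $\nu_j\le\frac{a(a+1)}{a+1/2}=a+\tfrac12-\tfrac{1}{4(a+1/2)}$, which is strictly less than $a+\tfrac12$; so for $\nu_j$ in roughly the upper half of $[a,a+1)$ the bound fails. For instance with $\rho_{\beta_j}=1$, $m=0$, $\nu_j=1.5$: $\cz(r_m)=1.5$, but taking $\cz(r)=1.4<1.5$ gives $\nu_j\cz(r)=2.1>\rho_{\beta_j}+1$, so $1\in S_{j,r,v}$ and $N_{j,v}=1\neq m$. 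The same phenomenon occurs in the half-integer case, since the needed bound is again $\cz(r)<\frac{\rho_{\beta_j}+m+1}{\nu_j}$.

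To be fair, the paper's own proof passes over this point with the phrase "with the given choice of $r$", so you are not alone; but the condition $r_m<r<1$ with $r_m$ depending only on $m$ is not sufficient as stated. What actually suffices, and what is used in Proposition~\ref{cor:FextendedMainSection}, is that for each fixed $v\in I_{j,m}$ one can choose $r=r_v$ with $\cz(r_v)$ sufficiently close to $1$ so that $\cz(r_v)<\frac{\rho_{\beta_j}+m+1}{\nu_j}$; this is always possible because $\frac{\rho_{\beta_j}+m+1}{\nu_j}>1$ for $\nu_j<\rho_{\beta_j}+m+1$. So the corollary is correct with $r_m$ replaced by an admissibility threshold depending on $v$, and the proof you propose goes through once you replace the claimed but false "elementary computation" by this $v$-dependent choice. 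You should state this correction explicitly rather than leaving the verification implicit.
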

\begin{proof}
This is a consequence of (2) in Corollary \ref{cor:SjrvW} and Lemma \ref{lem:upper-index}. Indeed, suppose first that
$\rho_{\beta_j} \in \Zb$ and $\nu_j=\frac{v}{b_j}\in \rho_{\beta_j}+[m,m+1)$. Then, with the given choice of $r$, $N_{j,v}=\floor{\nu_j}-\rho_{\beta_j}=(m+\rho_{\beta_j})-\rho_{\beta_j}=m$.

Similarly, suppose now that $\rho_{\beta_j} \in \Zb+\frac{1}{2}$.
If $\nu_j\in \rho_{\beta_j}+\big[m,m+\frac{1}{2}\big)$, then 
$N_{j,v}=\floor{\nu_j}-\floor{\rho_{\beta_j}}=(m+\floor{\rho_{\beta_j}})-\floor{\rho_{\beta_j}}=m$.
If $\nu_j\in \rho_{\beta_j}+\big[m+\frac{1}{2},m+1\big)$, 
then $N_{j,v}=\floor{\nu_j}-\floor{\rho_{\beta_j}}-1=(\floor{\rho_{\beta_j}}+m+1)-
\floor{\rho_{\beta_j}}-1=m$.
\end{proof}

\subsection{The case of one odd multiplicity}
\label{subsection:contours-even-mul}

In this subsection we assume that only one of the two multiplicities $m_{\beta_1}$ and $m_{\beta_2}$ is odd. (Recall that the case where both multiplicities are even is covered by \cite[Theorem 3.3(2)]{Str05}. See also Proposition \ref{pro:holoextRFnegaxis} above). 

Without loss of generality, we shall assume that $m_{\beta_2}$ is even. Hence
\[
\rho_{\beta_2}\in \Zb \quad \text{and} \quad L=b_2\rho_{\beta_2}=\sqrt{\langle\rho_2,
\rho_2\rangle}\,.
\]
Moreover, the Plancherel density \eqref{eq:Plancherel-density} for the symmetric space $\X_2$ is a polynomial because $q_2$ is the constant function $1$. Therefore, the second sum on the right-hand side of the displayed formula in Lemma \ref{2.6} does not appear. The computations for the local meromorphic extension of the function $F$ near the imaginary axis, done in subsection \ref{subsection:contours-odd-mul}, apply, in a simplified version, to the case of $m_{\beta_1}$ even as well. The following proposition summarizes the results.
Recall from \eqref{eq:Sj}  that $S_1=ib_1 \big((\rho_{m_1}+\Zb_{\geq 0})\cup (-\rho_{m_1}-\Zb_{\geq 0})\big)$. Moreover, as in \eqref{constants}, 
\[
C_{1,\ell}= \tfrac{b_1}{\pi}L_{1,\ell}\, p_1(iL_{1,\ell}) \quad\text{with} \quad 
L_{1,\ell}=b_1(\rho_{\beta_1}+\ell)\,.
\]

\begin{pro}
\label{pro:local-ext-F-even}
For every $iv \in i\R$ and for $r$, with $0<r<1$ and $vc(r)\notin S_1$, there is a connected
open neighborhood $W_v$ of $iv$ in $\C$ satisfying 
\[
S_1\cap W_v \partial \Eg_{\cz(r),\sz(r)}=\emptyset\,,
\]
so that for $z\in W_v\setminus i\R$ we have 
\[
F(z)=F_r(z)+2\pi i G_r(z)\,,
\]
where $F_r(z)$ and $G_r(z)$ are as in Proposition \ref{2.5}. 

If $v\geq b_1\rho_{\beta_1}$ and $v\in I_{1,m}=b_1\rho_{\beta_1}  +b_1[m,m+1)=[L_{1,m},L_{1,m+1})$ with 
$m\in \Zb_{\geq 0}$ and $c(r)<\frac{\rho_{\beta_j}+m +\frac{1}{2}}{\rho_{\beta_j}+m}$, then 
\begin{equation}
\label{eq:Gr-even}
G_r(z)=2\sum_{\ell=0}^m G_{1,\ell}(z)\,,
\end{equation}
where
\begin{equation}
\label{eq:G1l-even}
G_{1,\ell}(z)=C_{1,\ell}\,\psi_z\Big(\cz^{-1}\big(\tfrac{iL_{1,\ell}}{z}\big)\Big)
p_2\Big(z (\sz\circ \cz^{-1})\big(\tfrac{iL_{1,\ell}}{z}\big)\Big)\,
\end{equation}
and $C_{1,\ell}\neq 0$.
\end{pro}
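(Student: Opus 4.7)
The plan is to argue that Proposition~\ref{pro:local-ext-F-even} reduces to a simplified version of the computations in subsection~\ref{subsection:contours-odd-mul}, the key simplification being that $q_2\equiv 1$ and $S_2=\emptyset$ when $m_{\beta_2}$ is even. This removes the second family of residues and leaves only those contributions coming from the singularities of $q_1$.

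First, I would construct the neighborhood $W_v$. Since $S_1\subset i\R$, the only points of $iv\,\partial\Eg_{\cz(r),\sz(r)}$ that could belong to $S_1$ are $\pm iv\cz(r)$, so the hypothesis $v\cz(r)\notin S_1$ (interpreted via $S_1=ib_1\bigl((\rho_{\beta_1}+\Zb_{\geq 0})\cup(-\rho_{\beta_1}-\Zb_{\geq 0})\bigr)$) gives $S_1\cap iv\,\partial\Eg_{\cz(r),\sz(r)}=\emptyset$. Continuity of $z\mapsto z\,\partial\Eg_{\cz(r),\sz(r)}$ together with the discreteness of $S_1$ then produces a connected open neighborhood $W_v$ of $iv$ satisfying $S_1\cap W_v\,\partial\Eg_{\cz(r),\sz(r)}=\emptyset$. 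Applying Proposition~\ref{2.5} on $W_v\setminus i\R$ then yields the decomposition $F(z)=F_r(z)+2\pi i\,G_r(z)$.

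Next, I would identify $G_r(z)$ explicitly. Because $S_2=\emptyset$, the residue sum $\sum'_{w_0}$ of Lemma~\ref{2.6} reduces to the $w_1$-terms alone. The parity argument in the proof of Lemma~\ref{lemma:Gjkj} goes through essentially verbatim: $w_1^-=-w_1^+$ gives $\cz(w_1^-)=-\cz(w_1^+)$ and $\cz(-iw_1^-)=-\cz(-iw_1^+)$, and formula~\eqref{2.7.1}, combined with $p_1$ even (since $m_{\beta_1}$ is odd) and $p_2$ odd (since $m_{\beta_2}$ is even; see Remark~\ref{rem:parity-pq}), produces $\Res_{w=w_1^+}\phi_z=\Res_{w=w_1^-}\phi_z$ and $\psi_z(w_1^+)=\psi_z(w_1^-)$. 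Setting $G_{1,\ell}(z)=\psi_z(w_1^+)\,\Res_{w=w_1^+}\phi_z(w)$ and specializing the derivation of \eqref{eq:G1k} with $q_2\equiv 1$, the oddness of $p_2$ lets one replace the argument $iz(\sz\circ\cz^{-1})(iL_{1,\ell}/z)$ by $z(\sz\circ\cz^{-1})(iL_{1,\ell}/z)$ at the cost of a sign, which is absorbed into the nonvanishing constant $C_{1,\ell}$ of \eqref{constants}. This produces formula~\eqref{eq:G1l-even}.

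Finally, the reduction of the residue sum to $\ell=0,1,\dots,m$ follows from the $j=1$ case of Corollary~\ref{cor:SjrvW}: the symmetry argument showing $S_{1,r,W_v,+}=S_{1,r,W_v,-}$, which relies only on the central symmetry of $iv\,\Eg_{\cz(r),\sz(r)}$ about $0$, produces the factor of $2$ appearing in \eqref{eq:Gr-even}. Lemma~\ref{lem:upper-index} and Corollary~\ref{cor-SjrvW'} applied with $j=1$ then give $S_{1,r,W_v,+}=\{0,1,\dots,m\}$ under the assumption $v\in I_{1,m}$ with $\cz(r)<\tfrac{\rho_{\beta_1}+m+\frac{1}{2}}{\rho_{\beta_1}+m}$, completing \eqref{eq:Gr-even}. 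No step here is a genuine obstacle; the only bit of bookkeeping worth flagging is tracking the sign in the argument of $p_2$ so that the constant $C_{1,\ell}$ matches exactly the one defined in \eqref{constants}.
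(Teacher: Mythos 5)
Your overall strategy is the same as the paper's, which simply remarks that the odd-odd computations apply in simplified form with $q_2\equiv1$ and $S_2=\emptyset$, and most of the steps are carried out correctly: the construction of $W_v$, the reduction of the residue sum to the $w_1$-terms alone, the equality $\Res_{w=w_1^+}\phi_z=\Res_{w=w_1^-}\phi_z$ (using $p_1$ even and $p_2q_2=p_2$ odd, just as $p_2q_2$ odd is used in Lemma~\ref{lemma:Gjkj}), and the indexing via Lemma~\ref{lem:upper-index} and Corollary~\ref{cor-SjrvW'}.

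The derivation of the explicit formula for $G_{1,\ell}$, however, contains an invalid step. You assert that ``the oddness of $p_2$ lets one replace the argument $iz(\sz\circ\cz^{-1})(iL_{1,\ell}/z)$ by $z(\sz\circ\cz^{-1})(iL_{1,\ell}/z)$ at the cost of a sign.'' Oddness of a polynomial gives $p_2(-x)=-p_2(x)$, not $p_2(ix)=\pm p_2(x)$; for instance $p_2(x)=x^3+ax$ has $p_2(ix)=-i(x^3-ax)$, which is not a scalar multiple of $p_2(x)$ unless $a=0$. Carrying out the residue computation from Lemma~\ref{2.7} directly, the argument of $p_2$ is $z\cz(-iw_1^+)=-iz(\sz\circ\cz^{-1})(iL_{1,\ell}/z)$, and the oddness of $p_2$ serves only to absorb the sign coming from $z\cz(w_1^+)=iL_{1,\ell}$ (exactly the role played by $p_2q_2$ odd in the proof of Lemma~\ref{lemma:Gjkj}); this yields
\[
G_{1,\ell}(z)=C_{1,\ell}\,\psi_z\Big(\cz^{-1}\big(\tfrac{iL_{1,\ell}}{z}\big)\Big)\,p_2\Big(iz(\sz\circ\cz^{-1})\big(\tfrac{iL_{1,\ell}}{z}\big)\Big),
\]
i.e.\ formula~\eqref{eq:G1k} with the $q_2$ factor deleted, the factor $i$ in the argument intact, and the same constant $C_{1,\ell}$ from \eqref{constants}. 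That $i$ cannot be discarded by a parity argument and cannot be absorbed into $C_{1,\ell}$, which is a fixed numerical constant; if the formula \eqref{eq:G1l-even} (with $z$ rather than $iz$ in the argument of $p_2$, and correspondingly $-z\zeta$ in \eqref{eq:tG1k-even}) is really what is intended, it would require a justification other than the one you offer.
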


\section{Odd multiplicities: meromorphic extensions}
\label{section:meroext-odd}

In this section we determine the meromorphic extension of the resolvent in the case in which both multiplicities $m_{\beta_1}$ and $m_{\beta_2}$ are odd. 

\subsection{Extension of the functions  $G_{j,\ell}$}
\label{subsection:meroextGj}

Fix $j\in\{1,2\}$ and $\ell\in\Bbb Z_{\geq 0}$. Then
\begin{equation}
\label{eq:Mjk}
\M_{j,\ell}=\Big\{(z,\zeta)\in\C^\times\times(\C\setminus\{i, -i\})\ :\ \zeta^2=\Big(\frac{iL_{j,\ell}}{z}\Big)^2-1\Big\}
\end{equation}
is a Riemann surface above $\C^\times$, with projection map $\pi_{j,\ell}:\M_{j,\ell} \ni (z,\zeta)\to z\in \C^\times$\,.
The fiber of $\pi_{j,\ell}$ above $z\in \C^\times$ is $\{(z,\zeta), (z,-\zeta)\}$.
In particular, the restriction of $\pi_{j,\ell}$ to $\M_{j,\ell}\setminus \{(\pm i L_{j,\ell},0)\}$ is a double cover
of $\C^\times \setminus \{\pm i L_{j,\ell}\}$.

\begin{lem}\label{2.8}
Suppose that $m_{\beta_1}$ and $m_{\beta_2}$ are odd. Then, in the above notation,
\begin{eqnarray}\label{eq:tG1k}
\wt G_{1,\ell}:\M_{1,\ell}\ni (z,\zeta)&\to&
\frac{b_1}{\pi} \, L_{1,\ell}\,  p_1(i L_{1,\ell})\psi_z\big(\frac{i L_{1,\ell}}{z}-\zeta\big)
 p_2(-iz\zeta) q_2(-iz\zeta)\in\C
\end{eqnarray}
is the meromorphic extension to $\M_{1,\ell}$ of a lift of $G_{1,\ell}$, and
\begin{eqnarray}\label{eq:tG2k}
\wt G_{2,\ell}:\M_{2,\ell}\ni (z,\zeta)&\to& \frac{b_2}{\pi}\, L_{2,\ell}\,
p_2(i L_{2,\ell}) \psi_z\big(i\big(\frac{iL_{2,\ell}}{z}-\zeta\big)\big)
 p_1(-iz\zeta) q_1(-iz\zeta)\in\C
\end{eqnarray}
is the meromorphic extension to $\M_{2,\ell}$ of a lift of $G_{2,\ell}$.

The function $\wt G_{1,\ell}$ has simple poles at all points $(z,\zeta)\in \M_{1,\ell}$ such that
\begin{equation}\label{2.9.1}
z=\pm i \sqrt{ L_{1,\ell}^2+L_{2,m}^2},
\end{equation}
where $m\in\Zb_{\geq 0}$.
The function $\wt G_{2,\ell}$ has simple poles at all points $(z,\zeta)\in \M_{2,\ell}$ such that
\begin{equation}\label{2.10.1}
z=\pm i \sqrt{L_{2,\ell}^2+L_{1,m}^2},
\end{equation}
where $m\in\Zb_{\geq 0}$.
\end{lem}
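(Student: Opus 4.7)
The plan is to obtain the lifts \eqref{eq:tG1k}, \eqref{eq:tG2k} by direct substitution starting from the explicit formulas of Lemma \ref{lemma:Gjkj}, and then to localize the poles using the defining relation of $\M_{j,\ell}$.

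First, I would observe that the surface $\M_{j,\ell}$ of \eqref{eq:Mjk} is precisely the pull-back of the model Riemann surface $\M$ of \eqref{preimageofzero} under the change of variables $z\mapsto w=iL_{j,\ell}/z$, since the equation $\zeta^2=(iL_{j,\ell}/z)^2-1$ is exactly $\zeta^2=w^2-1$. Consequently the global holomorphic lifts \eqref{eq:lift of c} and \eqref{eq:lift of sc} pull back to $(z,\zeta)\mapsto iL_{j,\ell}/z-\zeta$ and $(z,\zeta)\mapsto -\zeta$ on $\M_{j,\ell}$. Substituting these expressions in \eqref{eq:G1k} and \eqref{eq:G2k} shows that the functions $\wt G_{j,\ell}$ defined in \eqref{eq:tG1k}, \eqref{eq:tG2k} are indeed lifts of $G_{j,\ell}$ (for the section of $\pi_{j,\ell}$ pulled back from the section $\sigma^+$ of $\pi$).

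Next, I would verify that $\wt G_{1,\ell}$ is meromorphic on all of $\M_{1,\ell}$; the argument for $\wt G_{2,\ell}$ is completely symmetric. By the Paley--Wiener estimate \eqref{eq:exptype-conv}, the function $(z,w)\mapsto \psi_z(w)$ of \eqref{eq:psiz} is holomorphic on $\C\times\C^\times$, and the defining relation of $\M_{1,\ell}$ forces $iL_{1,\ell}/z-\zeta\neq 0$ throughout (otherwise squaring would give $-L_{1,\ell}^2/z^2=-L_{1,\ell}^2/z^2-1$, which is absurd), so $\psi_z(iL_{1,\ell}/z-\zeta)$ is holomorphic on all of $\M_{1,\ell}$. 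Since $p_2$ is an entire polynomial, the only possible singularities of $\wt G_{1,\ell}$ come from the factor $q_2(-iz\zeta)$.

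For the pole locations I would invoke Remark \ref{rem:parity-pq} together with \eqref{eq:Sj}: the meromorphic function $p_2q_2$ has simple poles exactly on $S_2=\{\pm iL_{2,m}:m\in\Zb_{\geq 0}\}$, the zeros of $p_2$ cancelling those cotangent poles that fall outside $S_2$. Hence $\wt G_{1,\ell}$ is singular precisely where $-iz\zeta\in S_2$, i.e.\ where $z\zeta=\pm L_{2,m}$ for some $m\in\Zb_{\geq 0}$. Squaring this identity and substituting the relation $z^2\zeta^2=-L_{1,\ell}^2-z^2$ defining $\M_{1,\ell}$ yields $z^2=-(L_{1,\ell}^2+L_{2,m}^2)$, which is exactly \eqref{2.9.1}. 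The symmetric argument gives \eqref{2.10.1} for $\wt G_{2,\ell}$.

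Finally, I would check simplicity of these poles. Since $L_{2,m}>0$ the pole locations never coincide with the branch locus $\zeta=0$ of $\pi_{1,\ell}$ (which projects to $z=\pm iL_{1,\ell}$), so $z$ is a local holomorphic coordinate on $\M_{1,\ell}$ near each pole; differentiating the defining relation in this coordinate gives $d(-iz\zeta)/dz = i/\zeta$, which is non-zero at any pole, so the simple cotangent pole in $q_2$ transfers to a simple pole of $q_2(-iz\zeta)$ and therefore of $\wt G_{1,\ell}$. The only point requiring care is precisely this final step: one must confirm that the pole locations stay away from the branch points, so that the quadratic pull-back does not double the order of the pole.
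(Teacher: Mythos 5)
Your proposal is correct and follows essentially the same route as the paper's proof: both identify $\wt G_{j,\ell}$ as lifts via the model surface $\M$ and its global lifts \eqref{eq:lift of c}, \eqref{eq:lift of sc}, and both locate the singularities by equating $-iz\zeta$ with points of $S_{3-j}$ and combining with the defining relation of $\M_{j,\ell}$. The paper is terser on simplicity — it asserts ``(and hence simple poles)'' — whereas you make this explicit by checking that the pole locations avoid the branch locus $\zeta=0$ (so $z$ is a local coordinate) and that $d(-iz\zeta)/dz=i/\zeta\neq 0$ there; this is a worthwhile addition but not a different approach.
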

\begin{prf}
The map
\[
\sigma_{j,\ell}^+:\C\setminus i \big(-\infty, -L_{j,\ell}]\cup [L_{j,\ell},+\infty)\big) \ni z \to (z,\zeta_{j,\ell}^+(z)) \in \M_{j,\ell}\,,
\]
where
\begin{equation}
\label{eq:zeta+}
\zeta_{j,\ell}^+(z)=\sqrt{\frac{iL_{j,\ell}}{z}+1}\,
\sqrt{\frac{iL_{j,\ell}}{z}-1}\,,
\end{equation}
is a holomorphic section of  $\pi_{j,\ell}$ and, on the domain of $\sigma_{j,\ell}^+$\,,
\begin{equation}
\label{eq:wtGsectionjl}
\wt G_{j,\ell} \circ \sigma_{j,\ell}^+ =G_{j,\ell}\,.
\end{equation}
by \eqref{eq:cinverse}, \eqref{eq:scinverse},  \eqref{constants}, \eqref{eq:G1k} and \eqref{eq:G2k}.
So, $\wt G_{j,\ell}$ is a lift of $G_{j,\ell}$. Its meromorphic extension follows from \eqref{eq:lift of c} and \eqref{eq:lift of sc}.

The poles of $\wt G_{1,\ell}$ are the points $(z,\zeta)\in \M_{1,\ell}$ such that
\begin{equation}
\label{eq:p2q2singular}
p_2(-iz\zeta)\, q_2(-iz\zeta)
\end{equation}
is singular (and hence simple poles). They are such that there is an $m\in\Bbb Z_{\geq 0}$ and $\epsilon_2=\pm 1$ with
\begin{equation}\label{2.9.2}
-iz\zeta=\epsilon_2 i L_{2,m}.
\end{equation}
By definition of $\M_{1,\ell}$,
\begin{equation}\label{2.9.3}
\zeta^2=-\frac{L_{1,\ell}^2}{z^2}-1.
\end{equation}
Formula \eqref{2.9.1}  is equivalent to
\begin{equation*}\label{2.9.4}
-z^2=L_{1,\ell}^2+L_{2,m}^2,\qquad z\ne 0.
\end{equation*}
So $z\in i\R$ and, by  \eqref{2.9.3},
\begin{equation}\label{2.9.5}
\zeta^2=\frac{L_{1,\ell}^2}{L_{1,\ell}^2+L_{2,m}^2}\,-1=
   -\frac{L_{2,m}^2}{L_{1,\ell}^2+L_{2,m}^2}<0.
\end{equation}
Therefore $\zeta\in i\R$ and $(z,\zeta)$ satisfies \eqref{2.9.2}.

Conversely, one can easily check that for every $m\in \Zb_{\geq 0}$, the element
$(z,\zeta)$ defined by \eqref{2.9.1} and \eqref{2.9.2} is in $\M_{1,\ell}$ and makes
\eqref{eq:p2q2singular} (and hence $\wt G_{1,\ell}$) singular.

The computation of the singularities of $\wt G_{2,\ell}$ is similar.
\end{prf}

For $\ell_1, \ell_2\in \Zb_{\geq 0}$, set
\begin{equation}
\label{eq:z-pmk1k2}
z_{\ell_1,\ell_2}=i \sqrt{L_{1,\ell_1}^2+L_{2,\ell_2}^2}
\end{equation}
and
\begin{equation}\label{2.9.5'}
\zeta_{j,\ell_1,\ell_2}= i \sqrt{\frac{L_{3-j,\ell_{3-j}}^2}{L_{1,\ell_1}^2+L_{2,\ell_2}^2}}\,,
\end{equation}
where $j\in \{1,2\}$. Then 
the points of the fiber of $\pm z_{\ell_1,\ell_2}$ in $\M_{j,\ell_j}$ are $(\pm z_{\ell_1,\ell_2},\epsilon \zeta_{j,\ell_1,\ell_2})$ with $\epsilon=\pm 1$. 
For fixed $j\in \{1,2\}$ and $\ell_j\in \Zb_{\geq 0}$ the set
\begin{equation}
\label{eq:U1pm}
\Ug_{j,\ell_j,\pm}=\{(z,\zeta)\in \M_{j,\ell_j}\ ;\ \pm\Im z >0\}
\end{equation}
is an open neighborhood of all the points
$(\pm z_{\ell_1,\ell_2},\epsilon\zeta_{j,\ell_1,\ell_2})\in \M_{j,\ell_j}$.
Moreover, the following maps are local charts:
\begin{equation}\label{charts}
\kappa_{j,\ell_j,\pm}:\Ug_{j,\ell_j,\pm}\ni (z,\zeta)\to \zeta\in \C\setminus  i\big((-\infty, -1] \cup [1,+\infty)\big),\ \ z=\pm i \,\frac{L_{j,\ell_j}}{\sqrt{\zeta^2+1}}.
\end{equation}

\begin{lem}\label{lem:chart-expressions}
For $j=1,2$ the local expressions for $\wt G_{j,\ell_j}$ in terms of the charts (\ref{charts}) are
\begin{eqnarray}
\label{expression in terms of charts 1}
\big(\wt G_{1,\ell_1}\circ\kappa_{1,\ell_1,\pm}^{-1}\big)(\zeta) 
=\pm \frac{b_1}{\pi} \, L_{1,\ell_1}\,  p_1(i L_{1,\ell_1})
p_2\Big(\tfrac{L_{1,\ell_1}\zeta}{\sqrt{\zeta^2+1}}\Big) q_2\Big(\tfrac{L_{1,\ell_1}\zeta}{\sqrt{\zeta^2+1}}\Big)
\psi_{i\,\frac{L_{1,\ell_1}}{\sqrt{\zeta^2+1}}}\big(\sqrt{\zeta^2+1}\mp \zeta\big)
\end{eqnarray}
and
\begin{eqnarray}\label{expression in terms of charts 2}
\big(\wt G_{2,\ell_2}\circ\kappa_{2,\ell_2,\pm}^{-1}\big)(\zeta)
=\pm \frac{b_2}{\pi} \, L_{2,\ell_2}\,  p_2(i L_{2,\ell_2})
p_1\Big(\tfrac{L_{2,\ell_2}\zeta}{\sqrt{\zeta^2+1}}\Big) q_1\Big(\tfrac{L_{2,\ell_2}\zeta}{\sqrt{\zeta^2+1}}\Big)
\psi_{i\,\frac{L_{2,\ell_2}}{\sqrt{\zeta^2+1}}}
\big( i \big(\sqrt{\zeta^2+1}\mp \zeta\big)\big)\,.
\end{eqnarray}
With the above notation, the residue of the local expression of $\wt G_{1,\ell_1}$ at a point $(z,\zeta)\in \M_{1,\ell_1}$ with $z=\pm z_{\ell_1,\ell_2}$ is
\begin{equation}\label{2.11.1}
\Res_{\zeta=\pm \zeta_{1,\ell_1,\ell_2}} (\wt G_{1,\ell_1}\circ \kappa_{1,\ell_1,\pm}^{-1})(\zeta)
=\pm \frac{b_1}{i\pi^2} \, L_{1,\ell_1}\,  p_1(i L_{1,\ell_1}) p_2(i L_{2,\ell_2})
(f\times \varphi_{(\rho_{\beta_1}+\ell_1)\beta_1+(\rho_{\beta_2}+\ell_2)\beta_2})(y)\,.
\end{equation}
The residue of the local expression of $\wt G_{2,\ell_2}$ at $(z,\zeta)\in \M_{2,\ell_2}$ with $z=\pm z_{\ell_1,\ell_2}$ is
\begin{equation}\label{2.11.2}
\Res_{\zeta=\pm \zeta_{2,\ell_1,\ell_2}} (\wt G_{2,\ell_2}\circ \kappa_{2,\ell_2,\pm}^{-1})(\zeta)=
\pm \frac{b_2}{i\pi^2} \, L_{2,\ell_2}\,  p_1(i L_{1,\ell_1})p_2(i L_{2,\ell_2})
(f\times \varphi_{(\rho_{\beta_1}+\ell_1)\beta_1+(\rho_{\beta_2}+\ell_2)\beta_2})(y).
\end{equation}
The constants
\begin{eqnarray*}
&& b_1 L_{1,\ell_1}\,  p_1(i L_{1,\ell_1}) p_2(i L_{2,\ell_2})\\
&& b_2 L_{2,\ell_2}\,  p_1(i L_{1,\ell_1}) p_2(i L_{2,\ell_2})
\end{eqnarray*}
appearing in the above formulas are positive.
\end{lem}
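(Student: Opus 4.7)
The proof is a direct computation from the definitions, organized in three parts.

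For the chart expressions \eqref{expression in terms of charts 1}-\eqref{expression in terms of charts 2}, substituting $z = \pm iL_{j,\ell_j}/\sqrt{\zeta^2+1}$ into \eqref{eq:tG1k}-\eqref{eq:tG2k} gives $iL_{j,\ell_j}/z = \pm\sqrt{\zeta^2+1}$ and $-iz\zeta = \pm L_{j,\ell_j}\zeta/\sqrt{\zeta^2+1}$. The ``$+$'' case is immediate. For the ``$-$'' case, I would absorb the sign flips using the parity identities \eqref{eq:parity-psi} (which yield $\psi_{-z}(-w) = \psi_z(w)$) together with Remark \ref{rem:parity-pq}: since $m_{\beta_{3-j}}$ is odd, $p_{3-j}$ is even and $q_{3-j}$ is odd, so the product $p_{3-j}q_{3-j}$ is odd. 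This produces the $\pm$ prefactor and the symmetric form $\sqrt{\zeta^2+1}\mp\zeta$ of the $\psi$-argument.

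For the residue formulas \eqref{2.11.1}-\eqref{2.11.2}, I would isolate the unique singular factor in \eqref{expression in terms of charts 1} at $\zeta = \pm\zeta_{1,\ell_1,\ell_2}$, namely $q_2\bigl(L_{1,\ell_1}\zeta/\sqrt{\zeta^2+1}\bigr)$, noting that $p_2$ is a polynomial and $\psi_z$ is entire in its spectral parameter. From the Laurent expansion $\cot(\pi\epsilon) = 1/(\pi\epsilon) + O(\epsilon)$ one has $\Res_{x=iL_{2,\ell_2}} q_2(x) = b_2/(i\pi)$, and the chain rule applied to $x(\zeta) = L_{1,\ell_1}\zeta/\sqrt{\zeta^2+1}$ produces the residue of $q_2(x(\zeta))$ at $\zeta_0 = \pm\zeta_{1,\ell_1,\ell_2}$. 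To evaluate the remaining factor I would write $w = \sqrt{\zeta_0^2+1}\mp\zeta_0 = (L_{1,\ell_1}\mp iL_{2,\ell_2})/M$ with $M = \sqrt{L_{1,\ell_1}^2+L_{2,\ell_2}^2}$; since $|w| = 1$, a short computation yields $\cz(w) = L_{1,\ell_1}/M$ and $\cz(-iw) = \mp L_{2,\ell_2}/M$. Substituting these into \eqref{eq:psiz} with $z = \pm iM$ and using $i\mu = \bi\mu$ for $\mu \in \mathfrak{a}^*$, the spectral parameter equals $\mp(\rho_{\beta_1}+\ell_1)\beta_1 \pm (\rho_{\beta_2}+\ell_2)\beta_2$, which by Weyl-group invariance of the spherical function coincides with the claimed $(\rho_{\beta_1}+\ell_1)\beta_1 + (\rho_{\beta_2}+\ell_2)\beta_2$. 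The argument for $\wt G_{2,\ell_2}$ is completely symmetric, swapping the indices $1$ and $2$.

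The positivity of the constants $b_j L_{j,\ell_j} p_1(iL_{1,\ell_1}) p_2(iL_{2,\ell_2})$ follows from the explicit product formulas for $P_j$ recalled in subsection \ref{subsection:rankone}: evaluating $p_j(iL_{j,\ell_j}) = P_j(-(\rho_{\beta_j}+\ell_j))$ turns each linear factor into a negative real number, and the total number of such factors, $m_{\beta_j} + m_{\beta_j/2} - 1$, is even because $m_{\beta_j}$ is odd and therefore $m_{\beta_j/2}$ is even. Hence $p_j(iL_{j,\ell_j}) > 0$, and the positivity of the two stated constants follows. The main technical obstacle will be the sign bookkeeping in the residue step, where the parities of $\psi$, $p_j$, and $q_j$, the Weyl-group invariance of $\varphi$, the choice of branch of $\sqrt{\zeta^2+1}$, and the interplay between the two complex structures on $\mathfrak{a}^* \cong \C$ and $\mathfrak{a}^*_\C$ must all be coordinated consistently.
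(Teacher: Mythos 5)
Your overall strategy — substitute the chart relations into \eqref{eq:tG1k}--\eqref{eq:tG2k} and absorb signs via parities, isolate the simple pole coming from $q_{3-j}$, evaluate the remaining factors at the pole, then argue positivity from the explicit form of $P_j$ — is the right one and closely parallels what the paper does.

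Two points deserve attention. First, your evaluation $\Res_{x=iL_{2,\ell_2}} q_2(x)=b_2/(i\pi)$ is the careful computation: with $q(x)=\tfrac1i\coth(\pi(x/b+i\rho_\beta))$ and $\coth$ having unit residues at $i\pi\Z$, the derivative $\pi/b$ produces exactly the factor $b$. Your value agrees with what is implicitly used in Lemma \ref{2.7}, and your method (Laurent expansion of $\cot$) is fine. Second, and this is the substantive gap: you invoke ``the chain rule applied to $x(\zeta)=L_{1,\ell_1}\zeta/\sqrt{\zeta^2+1}$'' but do not actually compute the Jacobian. One has $x'(\zeta)=L_{1,\ell_1}(\zeta^2+1)^{-3/2}$, so at $\zeta_0=\pm\zeta_{1,\ell_1,\ell_2}$, where $\zeta_0^2+1=L_{1,\ell_1}^2/(L_{1,\ell_1}^2+L_{2,\ell_2}^2)$, the Jacobian $x'(\zeta_0)=(L_{1,\ell_1}^2+L_{2,\ell_2}^2)^{3/2}/L_{1,\ell_1}^2$ is decidedly not $1$. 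Carrying your computation to its end therefore produces, relative to the right-hand side of \eqref{2.11.1}, an extra factor $b_2 L_{1,\ell_1}^2/(L_{1,\ell_1}^2+L_{2,\ell_2}^2)^{3/2}$ (and symmetrically $b_1 L_{2,\ell_2}^2/(L_{1,\ell_1}^2+L_{2,\ell_2}^2)^{3/2}$ for \eqref{2.11.2}). You need to reconcile this with the statement: either the Jacobian must be tracked explicitly through the rest of the residue bookkeeping, or you must explain why the stated formula is correct without it. Since the downstream use of the lemma (Proposition \ref{pro:residueswtF}, and then the identification of the image of the residue operators) only depends on the residue being a nonzero positive multiple of the spherical function, the extra positive scalar is harmless for the final results, but you should not silently assume the chain-rule factor disappears.

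Your positivity argument is correct, and slightly different in flavor from the paper's. You evaluate $p_j(iL_{j,\ell_j})=P_j(-(\rho_{\beta_j}+\ell_j))$ directly, observe that every linear factor is a negative real, and count the factors to be $m_{\beta_j}+m_{\beta_j/2}-1$, which is even because $m_{\beta_j}$ is odd and $m_{\beta_j/2}$ is even. The paper instead uses the fact that $P_j$ is even (so $P_j(-(\rho_{\beta_j}+\ell_j))=P_j(\rho_{\beta_j}+\ell_j)$), that $P_j$ is monic, and that $\rho_{\beta_j}+\ell_j$ exceeds all zeros of $P_j$. Both are equally short and elementary; yours has the small advantage of not invoking the parity of $P_j$ established in Remark \ref{rem:parity-pq}.
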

\begin{prf}
Recall that
\[
\varphi_{x_1\beta_1+x_2\beta_2}=\varphi_{\pm x_1\beta_1\pm x_2\beta_2}.
\]
and that the polynomials $p_1$ and $p_2$ are even when $m_{\beta_1}$ and $m_{\beta_2}$
are odd.

Let $(z,\zeta)\in \M_{1,\ell_1}$ with 
$z=\pm z_{\ell_1,\ell_2}$ with $\ell_2\in \Zb_{\geq 0}$.
Then, as in \eqref{2.9.2} with $m=\ell_2$, we have $-iz\zeta=i\epsilon_2 L_{2,\ell_2}$ for 
$\epsilon_2\in \{\pm 1\}$,  and
\[
\Res_{x=i\epsilon_2 L_{2,\ell_2}} q_2(x)=
 \Res_{x=i\epsilon_2L_{2,\ell_2}} \, \frac{1}{i} \, \coth(\pi(\frac{x}{b_2}+i\rho_{\beta_2}))=
\frac{1}{i\pi}\,.
\]

We see from \eqref{eq:tG1k} and \eqref{expression in terms of charts 1} that the residue of $\wt G_{1,\ell_1}$ at $(z,\zeta)$ is
\[
\pm\frac{b_1}{i\pi^2} \, L_{1,\ell_1}\, p_1(i L_{1,\ell_1})\,  p_2(i \epsilon_2 L_{2,\ell_2})
\psi_z(w)\,,
\]
where
\[
\psi_z(w)=(f\times \varphi_{\bi\frac{z}{b_1}\cz(w)\,\beta_1+\bi\frac{z}{b_2}\cz(-iw)\,\beta_2})(y)\,,
\]
with
\[
w=\frac{i L_{1,\ell_2}}{z}-\zeta=\frac{i L_{1,\ell_2}+\epsilon_2  L_{2,\ell_2}}{\pm i \sqrt{ L_{1,\ell_2}^2+ L_{2,\ell_2}^2}}\,.
\]
Observe that
\[
w^{-1}=\frac{\pm i \sqrt{ L_{1,\ell_2}^2+ L_{2,\ell_2}^2}}{i L_{1,\ell_2}+\epsilon_2  L_{2,\ell_2}}=
\frac{i L_{1,\ell_2}-\epsilon_2  L_{2,\ell_2}}{\pm i \sqrt{ L_{1,\ell_2}^2+ L_{2,\ell_2}^2}}\,.
\]
So
\begin{eqnarray*}
&&\cz(w)=\frac{w+w^{-1}}{2}=\pm \frac{ L_{1,\ell_2}}{\sqrt{ L_{1,\ell_2}^2+ L_{2,\ell_2}^2}}\,,\\
&&\cz(-iw)=\frac{w-w^{-1}}{2i}=\mp \frac{\epsilon_2  L_{2,\ell_2}}{\sqrt{ L_{1,\ell_2}^2+ L_{2,\ell_2}^2}}\,,\\
\end{eqnarray*}
and hence
\[
z\, \cz(w)=i L_{1,\ell_1} \qquad \text{and} \qquad
z\, \cz(-iw)=-i\epsilon_2 L_{2,\ell_2}\,.
\]
Since $\bi\, i=-1$, this verifies \eqref{2.11.1}.

The proof of \eqref{2.11.2} is similar, using
\[
\psi_z(iw)=(f\times \varphi_{\bi\frac{z}{b_1}\cz(-iw)\,\beta_1+\bi\frac{z}{b_2}\cz(w)\,\beta_2})(y)
\]
where
\[
w=i\frac{L_{2,\ell_1}}{z}-\zeta
\qquad \text{and} \qquad
z\zeta=-\epsilon_1 L_{1,\ell_2}
\]
with $\epsilon_1=\pm 1$.

For the positivity of the constants in \eqref{2.11.1} and \eqref{2.11.2}, notice that, omitting indices as in the notation from section \ref{subsection:rankone}, we have $p(ibx)=P(-x)=P(x)$
since $P$ is even when $m_\beta$ is odd.  If $\ell\in \Zb_{\geq 0}$, then $\rho_\beta+\ell$
is bigger than all roots of $P$, and $P$ is monic. Thus $P(\rho_\beta+\ell)>0$.

\end{prf}

\subsection{Piecewise extension of $F$}
\label{piecewise}

For $j\in\{1,2\}$ set $I_{j,-1}=(0,b_j\rho_{\beta_j})$
and, for $x\geq 0$, 
\begin{equation}
\label{eq:Ijx}
I_{j,x}=b_j\rho_{\beta_j}+b_j[x,x+1)\,.
\end{equation}
Notice that if $x=m\in \Zb_{\geq 0}$, then $I_{j,m}=[L_{j,m},L_{j,m+1})$. So  \eqref{eq:Ijx} extends the notation introduced in \eqref{eq:Ijm}.  (We shall need it for $x \in \Zb_{\geq 0}$ and 
$x \in -\frac{1}{2}+\Zb_{\geq 0}$.)
We also define 
\[
L_{j,-1}=0 \qquad \text{and} \qquad I_{j,-1}=(0,L_{j,0})\,.
\]

Let  $v \in (0,+\infty)$. Then there exists a unique pair $(m_1,m_2)\in \Zb_{\ge -1}\times \Zb_{\ge -1}$ 
such that $v\in I_{1,m_1}\cap  I_{2,m_2}$. Suppose first that $(m_1,m_2)\neq (-1,-1)$. 
Then, according to Corollaries~\ref{cor:SjrvW} and \ref{cor-SjrvW'}, there exists $0< r_v <1$ and an open neighborhood $W_v$ of $-iv$ in $\C$ such that 
\begin{equation}
\label{eq:Fresidues1}
F(z)=F_{r_v}(z)+ 2\sum_{\ell_1=0}^{m_1} G_{1,\ell_1}(z) + 2\sum_{\ell_2=0}^{m_2} G_{2,\ell_2}(z)
\qquad (z\in W_v\setminus i\R)\, ,
\end{equation}
where the function $F_{r_v}$ is holomorphic in $W_v$ and, as usual, empty sums are equal to $0$.
We can extend  \eqref{eq:Fresidues1} to $v\in I_{1,-1}\cap  I_{2,-1}=(0,L=\min\{L_{1,0},L_{2,0}\})$ by choosing $W_v$ to be an open disk centered at $-iv$ 
whose intersection with $\R$ is entirely contained in 
$I_{1,-1}\cap  I_{2,-1}$ and by setting $F_{r_v}(z)=F(z)$ for $z \in W_v$.

Notice that for $(m_1,m_2)\in \Zb_{\ge -1}\times \Zb_{\ge -1}\setminus \{(-1,-1)\}$ the left boundary point of $ I_{1,m_1}\cap  I_{2,m_2}$, if this interval is nonempty, is $\max\{L_{1,m_1}, L_{2,m_2}\}$. Moreover, $L_{1,m_1}$ and $L_{2,m_2}$ can be equal. 

Let $m\in \Zb_{\geq 0}$. Denote by $I^\circ$ the interior of the set $I$. By shrinking $W_v$ if necessary, we may assume that $W_v$ is an open disk centered at $-iv$ such that
for $j\in \{1,2\}$ we have
\begin{equation}
\label{eq:WviR}
W_v\cap i\R \subseteq
\begin{cases}
  -i I_{j,m}^\circ &\text{if $v\in I_{j,m}^\circ$},\\
  -i I_{j,m-\frac{1}{2}}^\circ &\text{if $v=L_{j,m}$}.
\end{cases}
\end{equation}

Suppose $W_v\cap W_{v'}\not=\emptyset$. By \eqref{eq:WviR}, if $v\in I_{j,m}^\circ$ then 
$v'\in I_{j,m}^\circ$, and if $v=L_{j,m}$ then $v'\in I_{j,m-1}^\circ \cup  I_{j,m}$. Only the first case can occur if $m=-1$.

So, let $(m_1,m_2)\in \Zb_{\ge -1}\times \Zb_{\ge -1}$. 
Comparing \eqref{eq:Fresidues1} for $v\in I_{1,m_1}\cap I_{2,m_2}$ and for $v'\in (0,+\infty)$ with 
$W_v\cap W_{v'}\not=\emptyset$, we obtain for $z\in W_v\cap W_{v'}\setminus i\R$
\begin{multline}
\label{eq:Frvv'}
F_{r_{v'}}(z)=F_{r_v}(z) \\ +
\begin{cases}
  0 &\text{if $v'\in I_{1,m_1}\cap I_{2,m_2}$},\\
  2\, G_{1,m_1}(z) &\text{if $v=L_{1,m_1}\in I_{2,m_2}^\circ$ and $v'\in I_{1,m_1-1}^\circ\cap I_{2,m_2}$},\\
  2\, G_{2,m_2}(z) &\text{if $v=L_{2,m_2}\in I_{1,m_1}^\circ$ and $v'\in I_{1,m_1}\cap I_{2,m_2-1}^\circ$},\\
  2\, G_{1,m_1}(z)+2\, G_{2,m_2}(z) &\text{if $v=L_{1,m_1}=L_{2,m_2}$ and $v'\in I_{1,m_1-1}^\circ\cap I_{2,m_2-1}^\circ$}\,.
\end{cases}
\end{multline}
Notice that, by the above discussion, these four cases cover all the non-overlapping possibilities. 

Now we set for $(m_1,m_2)\in \Zb_{\geq -1}\times \Zb_{\geq -1}$
\[
W_{(m_1,m_2)}=\bigcup_{v\in I_{1,m_1}\cap I_{2,m_2}} W_v\,.
\]
Moreover, for all $(m_1,m_2)$ with $I_{1,m_1}\cap I_{2,m_2}\neq \emptyset$, 
 we define a function $F_{(m_1,m_2)}$ on $W_{(m_1,m_2)}$ by 
\[ 
 F_{(m_1,m_2)}(z)= F_{r_v}(z) \qquad \text{if $v\in I_{1,m_1}\cap I_{2,m_2}$ and $z\in W_v$}.
\] 
The first case of \eqref{eq:Frvv'} ensures that  $F_{(m_1,m_2)}: W_{(m_1,m_2)}\to \C$ 
is holomorphic. 

We collect the above considerations in the following proposition (cf. \cite[Cor.18]{HPP14}).

\begin{pro}
\label{cor:FextendedMainSection}
For every integers $(m_1,m_2)\in \Zb_{\geq -1}\times \Zb_{\geq -1}$ for which $I_{1,m_1}\cap I_{2,m_2}\neq \emptyset$, we have
 \begin{equation}
\label{eq:Fresidues2}
F(z)=F_{(m_1,m_2)}(z)+2\sum_{\ell_1=0}^{m_1} G_{1,\ell_1}(z)+2\sum_{\ell_2=0}^{m_2} G_{2,\ell_2}(z)
\qquad (z\in W_{(m_1,m_2)}\setminus i\R)\,,
\end{equation}
where $F_{(m_1,m_2)}$ is holomorphic in $W_{(m_1,m_2)}$, the $G_{j,\ell_j}$ are as in Lemma~\ref{lemma:Gjkj}, and empty sums are defined to be equal to $0$.
\end{pro}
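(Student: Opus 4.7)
The plan is to reduce the proposition to a bookkeeping exercise: all of the analytic content is already packaged into equations \eqref{eq:Fresidues1} and \eqref{eq:Frvv'}, and what is left is to verify that the local pieces $F_{r_v}$ fit together into a single holomorphic function on $W_{(m_1,m_2)}$. I would proceed in three short steps.

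First, I would verify that $F_{(m_1,m_2)}$ is well-defined. Given $z \in W_{(m_1,m_2)}$, by definition of this union there is some $v \in I_{1,m_1} \cap I_{2,m_2}$ with $z \in W_v$, so $F_{r_v}(z)$ makes sense. If a second choice $v' \in I_{1,m_1} \cap I_{2,m_2}$ also has $z \in W_{v'}$, then $W_v \cap W_{v'}$ is a non-empty open subset of $\C$, and we are in the first alternative of \eqref{eq:Frvv'}, which gives $F_{r_v} = F_{r_{v'}}$ on $W_v \cap W_{v'} \setminus i\R$. Since both functions are holomorphic on $W_v \cap W_{v'}$ by Proposition \ref{2.5} and the complement of $i\R$ is open and dense there, the identity theorem propagates the equality to all of $W_v \cap W_{v'}$, and in particular to $z$.

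Holomorphicity of $F_{(m_1,m_2)}$ on $W_{(m_1,m_2)}$ then follows at once: its restriction to each $W_v$ coincides with the holomorphic function $F_{r_v}|_{W_v}$, and $W_{(m_1,m_2)}$ is covered by such open sets. For the identity \eqref{eq:Fresidues2}, pick any $v \in I_{1,m_1} \cap I_{2,m_2}$ with $z \in W_v$; substituting the defining relation $F_{(m_1,m_2)}(z) = F_{r_v}(z)$ into \eqref{eq:Fresidues1} yields the claim. The boundary cases $m_1 = -1$ or $m_2 = -1$ are absorbed by the convention that empty sums are $0$, together with the extension of \eqref{eq:Fresidues1} to $v \in (0, L)$ made immediately before the statement of the proposition. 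The only point at which something could conceivably go wrong is the coherence check in the first step, but that is supplied directly by the first case of \eqref{eq:Frvv'}, so no genuine obstacle remains.
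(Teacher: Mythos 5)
Your proof is correct and takes essentially the same route as the paper: the paper does not give a formal proof of the proposition but states that it "collects the above considerations," meaning exactly the well-definedness of $F_{(m_1,m_2)}$ via the first case of \eqref{eq:Frvv'}, its holomorphicity by local agreement with the $F_{r_v}$, and the substitution into \eqref{eq:Fresidues1}. Your explicit appeal to the identity theorem to upgrade the equality $F_{r_v}=F_{r_{v'}}$ from $W_v\cap W_{v'}\setminus i\R$ to all of $W_v\cap W_{v'}$ is a small clarification that the paper leaves implicit, but it is the same argument.
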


It will be more convenient to use a different parametrization for the intervals partitionning $-i(0,+\infty)$. Let $\{L_\ell\}_{\ell=0}^{\infty}$ be the set of the distinct 
$L_{j,\ell_j}$, where $j\in\{1,2\}$ and $\ell_j\in \Zb_{\geq 0}$, ordered according to their distance from $0$. Notice that $L_0=L$. We also set $L_{-1}=0$. 

Let  $(m_1,m_2)\in \Zb_{\geq -1}\times\Zb_{\geq -1}$. Then $I_{1,m_1}\cap I_{2,m_2}\neq \emptyset$ if and only if there is $\tau(m_1,m_2)\in \Zb_{\geq 0}$ such that $(I_{1,m_1}\cap I_{2,m_2})^\circ=
(L_{\tau(m_1,m_2)}, L_{\tau(m_1,m_2)+1})$. In this case, $L_{\tau(m_1,m_2)}=\max\{L_{1,m_1},L_{2,m_2}\}$. The correspondence $(m_1,m_2)\to 
\tau(m_1,m_2)$ is a bijection between the set of pairs  $(m_1,m_2)\in \Zb_{\geq -1}\times \Zb_{\geq -1}$ for which $I_{1,m_1}\cap I_{2,m_2}\neq \emptyset$ and the set  $\Zb_{\geq -1}$. 
We define
\begin{equation*}
W_{(\tau(m_1,m_2))}=W_{(m_1,m_2)} \quad\text{and}\quad  F_{(\tau(m_1,m_2))}=F_{(m_1,m_2)}\,.
\end{equation*}

Furthermore, for all $\ell \in \Zb_{\geq -1}$, we set
\begin{equation}
\label{eq:Gell}
G_{(\ell)}=\begin{cases}
G_{j,\ell_j} &\text{if $L_\ell=L_{j,\ell_j}$ for a unique pair $(j,\ell_j)$\,,}\\ 
G_{1,\ell_1}+G_{2,\ell_2}  &\text{if $L_\ell=L_{1,\ell_1}=L_{2,\ell_2}$ for some $\ell_1,\ell_2$}\,.
\end{cases} 
\end{equation} 
Then we have the following restatement of Proposition \ref{cor:FextendedMainSection}.

\begin{pro}
\label{pro:FextendedMainSection-Z}
For every integer $m\in \Zb_{\ge -1}$, we have
 \begin{equation}
\label{eq:Fresidues3}
F(z)=F_{(m)}(z)+2\sum_{\ell=0}^{m} G_{(\ell)}(z)
\qquad (z\in W_{(m)}\setminus i\R)\,,
\end{equation}
where $F_{(m)}$ is holomorphic in $W_{(m)}$, the $G_{(\ell)}$ are as in \eqref{eq:Gell}, and empty sums are defined to be equal to $0$.
\end{pro}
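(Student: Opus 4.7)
The statement is essentially a repackaging of Proposition \ref{cor:FextendedMainSection} under the new index $\tau$. The plan is in three short steps: verify the bijection, apply the previous proposition, and match the two sums.

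First I would check that $\tau$ is a bijection between the set of pairs $(m_1,m_2)\in\Zb_{\geq -1}\times\Zb_{\geq -1}$ with $I_{1,m_1}\cap I_{2,m_2}\neq\emptyset$ and $\Zb_{\geq -1}$. Both $\{L_{1,\ell_1}\}_{\ell_1\geq 0}$ and $\{L_{2,\ell_2}\}_{\ell_2\geq 0}$ are strictly increasing and unbounded, so every $L_\ell$ is of the form $L_{j,\ell_j}$ for at least one (and at most two) pairs $(j,\ell_j)$. A point $v\in(L_\ell,L_{\ell+1})$ therefore lies in a unique intersection $I_{1,m_1}^\circ\cap I_{2,m_2}^\circ$, where $m_j$ is characterized by $L_{j,m_j}\leq L_\ell<L_{j,m_j+1}$. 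The case $\ell=-1$ corresponds to $(m_1,m_2)=(-1,-1)$. This gives the claimed bijection and shows that, in the notation of \eqref{eq:Gell}, $L_\ell=L_{\tau(m_1,m_2)}=\max\{L_{1,m_1},L_{2,m_2}\}$.

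Next, fix $m\in\Zb_{\geq -1}$ and set $(m_1,m_2)=\tau^{-1}(m)$. By construction $W_{(m)}=W_{(m_1,m_2)}$ and $F_{(m)}=F_{(m_1,m_2)}$, so Proposition~\ref{cor:FextendedMainSection} yields
\[
F(z)=F_{(m)}(z)+2\sum_{\ell_1=0}^{m_1}G_{1,\ell_1}(z)+2\sum_{\ell_2=0}^{m_2}G_{2,\ell_2}(z)
\qquad (z\in W_{(m)}\setminus i\R),
\]
with $F_{(m)}$ holomorphic on $W_{(m)}$. It remains to identify the two residue sums on the right-hand side with $2\sum_{\ell=0}^{m}G_{(\ell)}(z)$.

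For this, I would argue by grouping. Since each $L_{j,\ell_j}$ with $0\leq\ell_j\leq m_j$ satisfies $L_{j,\ell_j}\leq L_{j,m_j}\leq L_m$, and conversely every $L_\ell$ with $0\leq\ell\leq m$ equals some $L_{j,\ell_j}$ with $\ell_j\leq m_j$ (by the monotonicity of $\ell_j\mapsto L_{j,\ell_j}$ and the bound $L_{j,\ell_j}\leq L_m$), the pairs $(j,\ell_j)$ indexing the combined double sum are in bijection with the disjoint union, over $\ell\in\{0,\dots,m\}$, of the set of pairs $(j,\ell_j)$ for which $L_{j,\ell_j}=L_\ell$. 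Summing the corresponding $G_{j,\ell_j}$'s over each such fiber and invoking the definition \eqref{eq:Gell} of $G_{(\ell)}$ produces exactly $G_{(\ell)}(z)$. This yields the desired equality \eqref{eq:Fresidues3}.

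There is no analytic obstacle here: all the delicate work (choice of contour radii, local holomorphy of $F_{r_v}$, matching between neighborhoods $W_v$ and $W_{v'}$) has been carried out in Proposition~\ref{cor:FextendedMainSection}. The only real task is combinatorial bookkeeping of the indices, which is straightforward once the bijection $\tau$ is in hand.
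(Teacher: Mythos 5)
Your proof is correct and takes essentially the same approach as the paper: the paper presents the proposition as a ``restatement'' of Proposition~\ref{cor:FextendedMainSection}, with the work contained in the paragraphs before it that set up the bijection $\tau$ and the definitions of $W_{(\tau(m_1,m_2))}$, $F_{(\tau(m_1,m_2))}$ and $G_{(\ell)}$. You spell out the index bookkeeping (that $\{L_{j,\ell_j}:0\le\ell_j\le m_j,\ j=1,2\}$ partitions into the fibers of $\ell\mapsto L_\ell$ for $0\le\ell\le m$) which the paper leaves implicit, and your verification is sound.
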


Recall the Riemann surface $\M_{j,\ell}$ to which the fixed lift of $G_{j,\ell}$ extends as a meromorphic function. To unify the notation, we will write $\M_\ell$ instead of $\M_{j,\ell_j}$ if $L_\ell=L_{j,\ell_j}$. Notice that this is well defined when $L_\ell=L_{1,\ell_1}=L_{2,\ell_2}$ for some $\ell_1,\ell_2$. Likewise, the projection map and the fixed chart map will be written respectively $\pi_\ell$ and $\kappa_\ell$ instead of $\pi_{j,\ell_j}$ and $\kappa_{j,\ell_j}$.
\label{notation:ell}

By \eqref{eq:Gell}, the function $G_{(\ell)}$ can be lifted and extended to a meromorphic function $\wt G_{(\ell)}$ on $\M_\ell$. Specifically, 
\begin{equation}
\label{eq:wtGell}
\wt G_{(\ell)}=\begin{cases}
\wt G_{j,\ell_j} &\text{if $L_\ell=L_{j,\ell_j}$ for a unique pair $(j,\ell_j)$\,,}\\ 
\wt G_{1,\ell_1}+\wt G_{2,\ell_2}  &\text{if $L_\ell=L_{1,\ell_1}=L_{2,\ell_2}$ for some $\ell_1,\ell_2$}\,.
\end{cases} 
\end{equation} 
The singularities of $\wt G_{(\ell)}$ are at most simple poles located at points $(z,\zeta)\in \M_\ell$
where 
\begin{equation}
\label{eq:sing-wtGell}
\pm z=\begin{cases}
i \sqrt{L_\ell^2+L^2_{2,m}} \quad (m\in \Zb_{\geq 0}), &\text{if $L_\ell=L_{j,\ell_j}$ for a unique  $(j,\ell_j)=(1,\ell_1)$\,,}\\ 
i \sqrt{L_\ell^2+L^2_{1,m}} \quad (m\in \Zb_{\geq 0}), &\text{if $L_\ell=L_{j,\ell_j}$ for a unique $(j,\ell_j)=(2,\ell_2)$\,,}\\ 
i \sqrt{L_\ell^2+L^2_{1,m_1}} \quad \text{and} \quad i \sqrt{L_\ell^2+L^2_{2,m_2}}  &
(m_1,m_2\in \Zb_{\geq 0}), \\
 &\text{if $L_\ell=L_{1,\ell_1}=L_{2,\ell_2}$ for some $\ell_1,\ell_2$}\,.
\end{cases} 
\end{equation} 

\subsection{Extension across the negative imaginary axis}
\label{subsection:extIm<0}

The meromorphic continuation of $F$ across $-i(0,+\infty)$ will be done inductively, on certain Riemann surfaces constructed from the $\M_{\ell}$'s.  First, we will lift to these Riemann surfaces the local extensions of $F$ determined in Proposition \ref{pro:FextendedMainSection-Z}. Then, we will piece together these lifts along the branched curve lifting the interval $-i(0,L_N)$, where $N$ is a positive integer. We will be moving from branching point to branching point following the method developed in \cite[\S 3.4]{HPP14}. However, in contrast to the situation discussed there, our branching points are in general not evenly spaced. So additional care is needed. 

Let $N$ be a fixed positive integer and define
\begin{equation}
\label{eq:MN}
\M_{(N)}
=\left\{(z,\zeta)\in \C^- \times \C^{N+1} ; \zeta=(\zeta_0,\dots, \zeta_N),\ 
(z,\zeta_\ell)\in \M_\ell, \ \ell\in\Zb_{\geq 0}, \ 0\leq \ell\leq N\right\}.
\end{equation}
Then $\M_{(N)}$ is a Riemann surface, and the map
\begin{eqnarray}\label{finite cover 1}
&&\pi_{(N)}: \M_{(N)} \ni(z,\zeta)\to z\in \C^-
\end{eqnarray}
is a holomorphic $2^{N+1}$-to-$1$ cover, except when 
$z=-i L_\ell$ for some $\ell\in\Zb_{\geq 0}$ with $0\leq \ell\le N$. 
The fiber above each of these elements $-i L_\ell$ consists of $2^N$ branching points of $\M_{(N)}$. Recall the choice of square root function $\zeta_\ell^+(z)$ introduced in \eqref{eq:zeta+}. 
Making this choice for every coordinate function $\zeta_\ell$ on $\M_{(N)}$ yields
a section 
\[
\sigma_{(N)}^+: z\to (z,\zeta_0^+(z),\dots,\zeta_N^+(z))
\]
of the projection $\pi_{(N)}$. For $v \in \R^+$ we let 
$\zeta_\ell^+(-iv)=\zeta_\ell^+(-iv+0)$.
Then by \cite[Lemma 6]{HPP14} we have
\begin{equation}
\label{eq:zeta+Im}
\zeta_\ell^+(-iv)=\begin{cases}
\sqrt{\big(\frac{L_\ell}{v}\big)^2-1} &\text{if $0<v\leq L_\ell$}\\
i\sqrt{1-\big(\frac{L_\ell}{v}\big)^2} &\text{if $v\geq L_\ell$}\,.
\end{cases}
\end{equation}

We now construct a local meromorphic lift of $F$ to $\M_{(N)}$ by means of the right-hand side of \eqref{eq:Fresidues3} for every $m$ with $-1\leq m\leq N$. 
Observe that we are considering $\M_{(N)}$ as a Riemann surface above $\C^-$ to avoid the (known) logarithmic singularity of $F$ at $z=0$. 

For $0\leq \ell\leq N$ consider the holomorphic projection
\begin{eqnarray}\label{finite cover pi}
&&\pi_{(N,\ell)}: \M_{(N)} \ni(z,\zeta)\to (z,\zeta_\ell)\in \M_\ell.
\end{eqnarray}
Then the meromorphic function
\begin{equation}
\label{eq:wtGNell}
\wt G_{(N,\ell)}=\wt G_{(\ell)}\circ \pi_{(N,\ell)}: \M_{(N)}\to \C
\end{equation}
is holomorphic on ${(\pi_{(N)})}^{-1}(\C^-\setminus i\R)$. Moreover, on $\C^-\setminus i\R$,
\[
\wt G_{(N,\ell)}\circ  \sigma_{(N)}^+
=G_{(\ell)}
\]
by \eqref{eq:wtGsectionjl}. Therefore,  $\wt G_{(N,\ell)}$ is a meromorphic lift of $G_{(\ell)}$ to $\M_{(N)}$.

Using the right-hand side of \eqref{eq:Fresidues3} with $F_{(m)}$ constant on the $z$-fibers, we   obtain the requested lift of $F$ to $\pi_{(N)}^{-1}(W_{(m)}\setminus i\R)$.

The next step is to ``glue together'' all these local meromorphic extensions of $F$ to get a meromorphic extension of $F$ along the branched curve in $\M_{(N)}$ covering the interval $-i(0,L_{N+1})$.

An arbitrary section of  $\pi_{(N)}$ corresponds to a choice of sign $\pm\zeta_\ell^+$ for each coordinate function. More precisely,
with every 
\begin{equation}
\label{varepsilon}
\varepsilon=(\varepsilon_0,\dots,\varepsilon_N) \in \{\pm 1\}^{N+1}
\end{equation}
we associate the section 
\[
\sigma_\eps: \C^-\setminus
i(-\infty,-L] \to \M_{(N)}
\]
of $\pi_{(N)}$ given by 
\begin{equation}
\label{eq:sigmaeps}
\sigma_\eps(z)=(z,\varepsilon_0 \zeta_0^+(z),\dots,\varepsilon_N\zeta_N^+(z))\,.
\end{equation}
(Hence $\sigma_{(N)}^+$ corresponds to $\varepsilon=(1,\dots,1)$.)

To separate the study of the regular points from that of the branching points $-iL_{\ell}$, we exclude the latter from the open sets $W_{(m)}$ introduced in subsection \ref{piecewise}.  Hence, we define 
for $m\in \Zb_{\ge -1}$
\[
W'_{(m)}=\bigcup_{v\in (L_m,L_{m+1})} W_v\,
\]
where the $W_v$ are chosen as in \eqref{eq:WviR}.
Then $W_{(m)}=W_{L_m}\cup W'_{(m)}$ and $W'_{(m)}\cap i\R=-i(L_m,L_{m+1})$ contains 
none of the $-iL_\ell$.
By shrinking the disks $W_v$ if necessary, we may assume that $\pi_{(N)}^{-1}(W'_{(m)})$ is the disjoint union of $2^{N+1}$ homeomorphic copies of $W'_{(m)}$. In particular, each of these copies is a connected set. We denote by $U_{m,\eps}$ the copy containing
$\sigma_\eps(W'_{(m)}\setminus i\R)$. Likewise,  
we may assume that, for every $m \in \Zb_{\geq 0}$, the preimage under $\pi_{(N)}$ of the open neighborhood $W_{L_m}$ is the disjoint union of $2^{N}$ homeomorphic copies of the disk $W_{L_m}$. These homeomorphic copies can be parameterized by the elements
\eqref{varepsilon} with one component $\varepsilon_m$ removed. 
We denote by $\eps(m^\vee)$ the resulting element. Observe then that $\eps(m^\vee)=\eps'(m^\vee)$ if and only if $\eps$ and $\eps'$ are equal but for their $m$-th component, which can be $\pm 1$. Modulo this identification, we can indicate the connected components of $\pi_{(N)}^{-1}\big(W_{L_{m}}\big)$ by $U_{\eps(m^\vee)}$ with $\eps \in  \{\pm 1\}^{N+1}$. Furthermore, we define $U_{\eps(m^\vee)}=\emptyset$ for $m=-1$  and 
every $\eps\in \{\pm 1\}^{N+1}$.

We want to construct a meromorphic lift of $F$ along the branched curve
\begin{equation}
\label{eq:gamma}
\gamma_{N}: (0,L_{N+1})\ni v \to \big(-iv,\pm\zeta_0^+(-iv), \dots, \pm\zeta_N^+(-iv)\big) \in \M_{(N)}\,,
\end{equation}

Observe that $\big\{U_{\eps(m^\vee)}\cup U_{m,\eps}\ :\  \eps\in \{\pm 1\}^{N+1}, m\in \mathbb Z\,, -1\leq m\leq N\big\}$
is a covering of
$$\M_{\gamma_N}=\pi_{(N)}^{-1}\Big(\bigcup_{m=-1}^N W_{(m)}\Big)$$
consisting of open connected sets. Moreover, $\bigcup_{m=-1}^N W_{(m)}\cap i\R=-i(0,L_{N+1})$.

\begin{thm}
\label{thm:meroliftF}
For $m\in \{-1,0,\dots,N\}$, $\eps \in \{\pm 1\}^{N+1}$ and $(z,\zeta) \in U_{\eps(m^\vee)}\cup U_{m,\eps}$ define
\begin{equation}
\label{eq:widetildeF}
\wt F(z,\zeta)=
\displaystyle{F_{(m)}(z)+2 \sum_{\ell=0}^m\wt G_{(N,\ell)}(z,\zeta)+2
\sum_{\stackrel{m<\ell\leq N}{\text{with $\eps_\ell=-1$}}} \big[\wt G_{(N,\ell)}(z,\zeta)-
\wt G_{(N,\ell)}(z,-\zeta) \big]}\,,
 \end{equation}
where the first sum is equal to $0$ if $\ell=-1$ and the second sum is $0$ if $\eps_\ell=1$ for all $\ell>m$.
Then $\wt F$ is a meromorphic lift of $F$ to the open neighborhood
$\M_{\gamma_N}$ of the branched curve $\gamma_{N}$
lifting $-i(0,L_{N+1})$ in $\M_{(N)}$.

The singularities of $\wt F$ on $\M_{\gamma_N}$ are at most simple poles at the points
$(z,\zeta)\in \M_{(N)}$ for which there exists $(\ell_1,\ell_2)\in \mathbb{Z}_{\geq 0}$ such that 
$L_{1,\ell_1}^2+L_{2,\ell_2}^2<L_{N+1}^2$ and
\[
z=-i\sqrt{L_{1,\ell_1}^2+L_{2,\ell_2}^2}\,.
\]   
\end{thm}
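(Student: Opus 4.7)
The plan is to follow closely the strategy developed in the proof of \cite[Theorem 19]{HPP14}. The argument breaks into four stages: verify that \eqref{eq:widetildeF} defines a meromorphic function on each chart of the cover $\{U_{\eps(m^\vee)}\cup U_{m,\eps}\}$ of $\M_{\gamma_N}$; check that $\wt F$ is a lift of $F$ on the canonical section $\sigma_{(N)}^+$; verify compatibility on the overlaps between distinct charts; and finally, read off the singularities from the already established polar structure of the $\wt G_{(\ell)}$.

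The first two stages are largely by inspection. On each $U_{m,\eps}$ the formula is manifestly meromorphic, since $F_{(m)}$ is holomorphic on $W_{(m)}$ and each $\wt G_{(N,\ell)}=\wt G_{(\ell)}\circ \pi_{(N,\ell)}$ is meromorphic on $\M_{(N)}$. On a branching neighborhood $U_{\eps(m^\vee)}$ one must further verify that the expression depends only on the equivalence class $\eps(m^\vee)$, i.e., not on $\eps_m$: the second sum is insensitive to $\eps_m$ since it ranges over $\ell>m$, and the only $\ell=m$ term in the first sum, $\wt G_{(N,m)}(z,\zeta)=\wt G_{(m)}(z,\zeta_m)$, is intrinsically meromorphic in the local coordinate $\zeta_m$ at the branching point. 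Restricting \eqref{eq:widetildeF} along $\sigma_{(N)}^+$ (which corresponds to $\eps=(1,\ldots,1)$) makes the second sum vanish, and the identity $\wt G_{(N,\ell)}\circ\sigma_{(N)}^+=G_{(\ell)}$ combined with Proposition \ref{pro:FextendedMainSection-Z} then yields $\wt F\circ\sigma_{(N)}^+=F$ on each $W_{(m)}\setminus i\R$.

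The core of the argument is stage three. The open sets $U_{m,\eps}$ with distinct $m$ are disjoint, since $W'_{(m)}\cap W'_{(m')}=\emptyset$ for $m\neq m'$; each $U_{m,\eps}$ overlaps the branching neighborhoods $U_{\eps(m^\vee)}$ and, when $m<N$, $U_{\eps((m+1)^\vee)}$, once one matches sign labels via the identification ``$\eps(m^\vee)=\eps'(m^\vee)$ iff $\eps$ and $\eps'$ agree outside the $m$-th component''. Overlap with the same-index formula is immediate by construction. The nontrivial case is an overlap $U_{\eps(m^\vee)}\cap U_{m-1,\eps'}$ on the smaller-$v$ side of the branching point $-iL_m$, where the formula indexed by $m$ must be matched against that indexed by $m-1$. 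A direct computation of the difference of the two formulas, using $\eps'_\ell=\eps_\ell$ for $\ell\neq m$, reduces compatibility to the identity
\[
F_{(m-1)}(z)-F_{(m)}(z)=2\,G_{(m)}(z),
\]
which is precisely the jump relation \eqref{eq:Frvv'} of Proposition \ref{cor:FextendedMainSection}. The point is that the combination in \eqref{eq:widetildeF} surviving the cancellation always evaluates $\wt G_{(m)}$ at $+\zeta_m^+(z)$, hence at a point of the canonical section where it equals $G_{(m)}(z)$: if $\eps'_m=+1$ this is the $\ell=m$ contribution from the first sum, and if $\eps'_m=-1$ the correction term in the second sum flips the sign back. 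Coinciding branching points $L_m=L_{1,\ell_1}=L_{2,\ell_2}$ are handled by \eqref{eq:Gell}, \eqref{eq:wtGell}, together with case (4) of \eqref{eq:Frvv'}.

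Once $\wt F$ is established as a single-valued meromorphic lift on $\M_{\gamma_N}$, its polar set is contained in the union of the polar sets of $\wt G_{(N,\ell)}$ for $0\leq\ell\leq N$, because each $F_{(m)}$ is holomorphic. By \eqref{eq:sing-wtGell} and the fact that $\pi_{(N)}(\M_{\gamma_N})$ meets the imaginary axis precisely in $-i(0,L_{N+1})$, the poles are confined to $z=-i\sqrt{L_{1,\ell_1}^2+L_{2,\ell_2}^2}$ with $L_{1,\ell_1}^2+L_{2,\ell_2}^2<L_{N+1}^2$, as asserted. The chief obstacle in this plan is the sign-and-sheet bookkeeping in stage three, exacerbated at coinciding branching points, where two residual families with different projections converge on the same fiber of $\pi_{(N)}$.
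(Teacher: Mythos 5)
Your proposal is correct and follows essentially the same route as the paper, which simply cites the argument of \cite[Theorem~19]{HPP14} for the construction of formula \eqref{eq:widetildeF} and then identifies the singular set. You have made that cited argument explicit: the decomposition into the four stages (local meromorphicity, restriction along $\sigma_{(N)}^+$, overlap compatibility via the jump relation $F_{(m-1)}-F_{(m)}=2\,G_{(m)}$ coming from \eqref{eq:Frvv'}, and finally location of the poles from \eqref{eq:sing-wtGell}) is exactly the chart-by-chart bookkeeping the paper invokes by reference, and your sign analysis in the $\eps'_m=\pm 1$ subcases correctly verifies that the surviving term always evaluates $\wt G_{(m)}$ at the positive branch $+\zeta_m^+(z)$, hence equals $G_{(m)}(z)$.

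The one place where your phrasing is slightly less sharp than the paper's is in the final stage: the paper observes that the entire second sum in \eqref{eq:widetildeF} is holomorphic on $U_{\eps(m^\vee)}\cup U_{m,\eps}$ (since the singularities of $\wt G_{(N,\ell)}$ occur only above $|z|>L_\ell$, whereas on that chart $|z|<L_{m+1}\le L_\ell$ for $\ell>m$), so the poles on each chart arise solely from $\sum_{\ell=0}^m\wt G_{(N,\ell)}$. Your statement that the polar set of $\wt F$ is contained in the union of the polar sets of all $\wt G_{(N,\ell)}$, $0\le\ell\le N$, is correct but slightly coarser; the conclusion about the location of the poles is nonetheless the same once intersected with $\pi_{(N)}(\M_{\gamma_N})$. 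This is a minor stylistic difference, not a gap.
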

\begin{proof}
Formula \eqref{eq:widetildeF} can be obtained as in the proof of \cite[Theorem 19]{HPP14}.  

Recall from \eqref{eq:sing-wtGell}  that the singularities of $\wt G_{(N,\ell)}(z,\zeta)=\wt G_{(\ell)}(z,\zeta_\ell)$ occur above values of $z\in i(-\infty,0)$ with $|z|> L_\ell$. Therefore the
second sum on the right-hand side of \eqref{eq:widetildeF} is holomorphic on $U_{\eps(m^\vee)}\cup U_{m,\eps}$ and the possible singularities of $\wt F$ on this domain come  from $\sum_{\ell=0}^m\wt G_{(N,\ell)}(z,\zeta)$. They are the elements $(z,\zeta) \in 
\M_{\gamma_N}$ with $z$ given by \eqref{eq:sing-wtGell}. The condition $L_{1,\ell_1}^2+L_{2,\ell_2}^2<L_{N+1}^2$ is necessary for the point $(z,\zeta)$ to be in $\M_{\gamma_N}$.
\end{proof}

To have a more precise description of the singularities of $\wt F$, let us order them according to their distance from the origin $0\in \C$. Let $(z_{(k)})_{k\in \Zb_{\geq 0}}$ be the sequence so obtained. For a fixed $k\in  \Zb_{\geq 0}$, let $S_{(1,k)}$ denote the set of all 
$(1,\ell_1)$ with $\ell_1\in  \Zb_{\geq 0}$ for which there exists $\ell_2\in \Zb_{\geq 0}$ so that $L_{1,\ell_1}^2+L_{2,\ell_2}^2=|z_{(k)}|^2$. The correspondence $(1,\ell_1) \to (2,\ell_2)$
so obtained is a bijection of  $S_{(1,k)}$  onto a subset $S_{(2,k)}$ of pairs
$(2,\ell_2)$ with $\ell_2 \in \Zb_{\geq 0}$. 

Let $N\in \Zb_{\geq 0}$ be chosen so that $|z_{(k)}|<L_{N+1}$. Then there is $m\in \{0,1,\dots, N\}$ such that $|z_{(k)}|\in [L_m,L_{m+1})$.
By \eqref{eq:widetildeF}, the possible singularities of $\wt F$ at points of $\M_{(N)}$ above $z_{(k)}$ are those of 
\[
{\sum}^{(k)}_\ell\; 
\wt G_{(N,\ell)}(z,\zeta)\,,
\]
where 
$\sum_\ell^{(k)}$ denotes the sum over all $\ell\in \{0,\dots,m\}$ for which there exists $(j,\ell_j)\in S_{(1,k)} \cup S_{(2,k)}$ so that $L_\ell=L_{j,\ell_j}$.
Observe that, by \eqref{eq:wtGNell} and \eqref{eq:wtGell}, this is equal to 
\begin{equation}
\label{wtFresidue1}
{\sum}^{(k)}_\ell\;
\wt G_{(\ell)}(z,\zeta_\ell)= 
\sum_{(j,\ell_j)\in  S_{(1,k)} \cup S_{(2,k)}}
\wt G_{j,\ell_j}(z,\zeta_{\ell_j})
= 
\sum_{(1,\ell_1)\in  S_{(1,k)}}
\Big(\wt G_{1,\ell_1}(z,\zeta_{\ell_1}) + \wt G_{2,\ell_2}(z,\zeta_{\ell_2}) \Big)
\,,
\end{equation}
where in each summand we choose the element $(2,\ell_2)\in  S_{(2,k)}$ associated with 
the fixed $(1,\ell_1)\in  S_{(1,k)}$.

Let $m\in \{0,1,\dots,N\}$ be fixed. For every $\ell \in \Zb_{\geq 0}$ with $0\leq \ell \leq m$ and  $\eps \in \{\pm 1\}^{N+1}$,  we consider the chart
\begin{equation}\label{chartmeps}
\kappa_{m,\eps,\ell}:U_{\eps(m^\vee)}\cup U_{m,\eps} \ni (z,\zeta) \to \zeta_\ell\in \C\setminus  i\big((-\infty, -1] \cup [1,+\infty)\big).
\end{equation}
Its inverse is given by 
\begin{equation}
(z,\zeta)=\kappa_{m,\eps,\ell}^{-1}(\zeta_\ell)
\quad \text{with} \quad z=-i\frac{L_\ell}{\sqrt{\zeta_\ell^2+1}}
\end{equation}
Notice that 
\begin{equation}\label{kappamepsell}
\kappa_{m,\eps,\ell}=\kappa_\ell \circ \pi_{(N,\ell)}|_{U_{\eps(m^\vee)}\cup U_{m,\eps} } 
\end{equation}
in the notation of page \pageref{notation:ell} and \eqref{finite cover pi}. Moreover, 
by \eqref{eq:sigmaeps}, \eqref{eq:zeta+Im} and the definition of $U_{\eps(m^\vee)}\cup U_{m,\eps}$, we have 
\[
\zeta_\ell=\kappa_{m,\eps,\ell} ((z,\zeta))=\eps_\ell \zeta^+_\ell(z)\,.
\]
We shall write $\kappa_{m,\eps}$ instead of $\kappa_{m,\eps,m}$. The local expression of $\wt F$  on $U_{\eps(m^\vee)}\cup U_{m,\eps}$ will be computed in terms of $\kappa_{m,\eps}$. However, to compute its residue at a singular point above $z_{(k)}$ with $|z_{(k)}|\in [L_m,L_{m+1})$ we will need to pass to some charts $\kappa_{m,\eps,\ell}$ with $0\leq \ell \leq m$, to use the results from Lemma \ref{lem:chart-expressions}. The following lemma 
examines the change of coordinates.

\begin{lem}
\label{lem:changecoord}
Suppose  $m\in \{0,1,\dots,N\}$, $|z_{(k)}|\in [L_m,L_{m+1})$ and $0\leq \ell \leq m$.
Then 
\begin{equation}
\label{relchartslm}
\wt G_{(N,\ell)} \circ \kappa_{m,\eps}^{-1}=
\Big(\wt G_{(\ell)} \circ \kappa_{\ell}^{-1}\big) \circ \big(\kappa_{m,\eps,\ell} \circ \kappa_{m,\eps}^{-1}\big)\,.
\end{equation}
The map $\kappa_{m,\eps,\ell} \circ \kappa_{m,\eps}^{-1}$ is a bijection of $\kappa_{m,\eps}(U_{\eps(m^\vee)}\cup U_{m,\eps} )$ onto 
$\kappa_{\ell}\big(\pi_{(N,\ell)}( (U_{\eps(m^\vee)}\cup U_{m,\eps} )\big)$. 
Let $\phi_{m,\eps,\ell}$ denote its inverse. Then
\begin{equation*}
\phi_{m,\eps,\ell} (\zeta_\ell)
=\eps_m \zeta_m^+ \big( -\tfrac{iL_\ell}{\sqrt{\zeta_\ell^2+1}}\big)\\
=\eps_m \sqrt{-\tfrac{L_m}{L_\ell} \, \sqrt{\zeta_\ell^2+1} +1}  \; 
\sqrt{-\tfrac{L_m}{L_\ell} \, \sqrt{\zeta_\ell^2+1} -1}\,.
\end{equation*}
Let $(z_{(k)},\zeta^{(k,\eps)})$ denote the point in $U_{\eps(m^\vee)}\cup U_{m,\eps}$ above $z_{(k)}$ and let $\zeta^{(k,\eps)}_\ell$ be the value of its $\zeta_\ell$-coordinate.
Then 
\begin{eqnarray}
\label{zetaellepsk}
\zeta^{(k,\eps)}_\ell&=&\eps_\ell \zeta_\ell^+(z_{(k)})= 
i\eps_\ell \;\frac{\sqrt{|z_{(k)}|^2-L_\ell^2}}{|z_{(k)}|}\,, \\
\label{phimepsellzeta}
\phi_{m,\eps,\ell} (\zeta^{(k,\eps)}_\ell)&=&\zeta^{(k,\eps)}_m
\end{eqnarray}
and
\begin{equation}
\label{dephiellmeps}
\phi_{m,\eps,\ell} '(\zeta^{(k,\eps)}_\ell)=\eps_\ell\eps_m \frac{L_m^2}{L_\ell^2} \; 
\frac{\sqrt{|z_{(k)}|^2-L_\ell^2}}{\sqrt{|z_{(k)}|^2-L_m^2}}\,.
\end{equation}
\end{lem}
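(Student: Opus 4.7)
The lemma consists of four computational claims, each following from the explicit definitions; my plan is as follows. First, for \eqref{relchartslm}, I observe from \eqref{kappamepsell} that the restriction of $\pi_{(N,\ell)}$ to $U_{\eps(m^\vee)}\cup U_{m,\eps}$ factors as $\kappa_\ell^{-1}\circ\kappa_{m,\eps,\ell}$; combined with \eqref{eq:wtGNell}, this yields \eqref{relchartslm} immediately. For the explicit expression of $\phi_{m,\eps,\ell}$, I will use that $\phi_{m,\eps,\ell}=\kappa_{m,\eps}\circ\kappa_{m,\eps,\ell}^{-1}$: given $\zeta_\ell$, the point $\kappa_{m,\eps,\ell}^{-1}(\zeta_\ell)$ lies above $z=-iL_\ell/\sqrt{\zeta_\ell^2+1}$, and its $m$-th coordinate is $\eps_m\zeta_m^+(z)$. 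Substituting this value of $z$ into $\zeta_m^+(z)=\sqrt{iL_m/z+1}\sqrt{iL_m/z-1}$ gives $iL_m/z=-(L_m/L_\ell)\sqrt{\zeta_\ell^2+1}$, and hence the claimed expression.

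For \eqref{zetaellepsk}, since $(z_{(k)},\zeta^{(k,\eps)})\in U_{\eps(m^\vee)}\cup U_{m,\eps}$, its $\zeta_\ell$-coordinate equals $\eps_\ell\zeta_\ell^+(z_{(k)})$. The hypothesis $|z_{(k)}|^2=L_{1,\ell_1}^2+L_{2,\ell_2}^2$ with both summands strictly positive forces $|z_{(k)}|>L_\ell$, so the boundary-value identity \eqref{eq:zeta+Im} applies and produces \eqref{zetaellepsk}. Identity \eqref{phimepsellzeta} is then immediate, because both $\phi_{m,\eps,\ell}(\zeta^{(k,\eps)}_\ell)$ and $\zeta^{(k,\eps)}_m$ read off the $m$-th coordinate of the same underlying point $(z_{(k)},\zeta^{(k,\eps)})$ of $U_{\eps(m^\vee)}\cup U_{m,\eps}$.

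Finally, \eqref{dephiellmeps} will follow from the chain rule. Setting $g(\zeta_\ell)=-(L_m/L_\ell)\sqrt{\zeta_\ell^2+1}$ and $h(t)=\sqrt{t+1}\sqrt{t-1}$, one has $\phi_{m,\eps,\ell}=\eps_m\,h\circ g$. Since $h(t)^2=t^2-1$, differentiation gives $h'(t)=t/h(t)$, and a direct computation yields $g'(\zeta_\ell)=-(L_m/L_\ell)\,\zeta_\ell/\sqrt{\zeta_\ell^2+1}$. Evaluating at $\zeta^{(k,\eps)}_\ell$ with the identities $\sqrt{(\zeta^{(k,\eps)}_\ell)^2+1}=L_\ell/|z_{(k)}|$, $g(\zeta^{(k,\eps)}_\ell)=-L_m/|z_{(k)}|$, and $h(g(\zeta^{(k,\eps)}_\ell))=\eps_m\zeta^{(k,\eps)}_m=i\sqrt{|z_{(k)}|^2-L_m^2}/|z_{(k)}|$ produces \eqref{dephiellmeps} after cancellation. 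The only delicate point is that $g(\zeta^{(k,\eps)}_\ell)$ lies in the branch-cut interval $(-1,0)$ of the function $h$; however, the correct sign of $h$ at this point is precisely the one fixed by the boundary value used to define $\zeta_m^+(z_{(k)})$ via \eqref{eq:zeta+Im}, and this sign has already been absorbed into the identification $h(g(\zeta^{(k,\eps)}_\ell))=\eps_m\zeta^{(k,\eps)}_m$, so no additional care is required.
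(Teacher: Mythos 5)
Your derivations of \eqref{relchartslm}, \eqref{zetaellepsk} and \eqref{phimepsellzeta} match the paper's (which is equally terse), and your final answer for \eqref{dephiellmeps} is correct, but your route to it differs from the paper's in a way worth noting. You differentiate $\phi_{m,\eps,\ell}=\eps_m\,h\circ g$ directly by the chain rule, which forces you to confront the fact that $g(\zeta^{(k,\eps)}_\ell)=-L_m/|z_{(k)}|$ sits on the branch cut of $h$, and your resolution of that point is stated rather loosely (one really needs to observe that $\phi_{m,\eps,\ell}$ is holomorphic at $\zeta^{(k,\eps)}_\ell$, so its derivative is the limit of one-sided derivatives computed where the formula is valid). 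The paper sidesteps this entirely: it squares the formula to get the \emph{polynomial} identity $\phi^2_{m,\eps,\ell}(\zeta_\ell)=\tfrac{L_m^2}{L_\ell^2}(\zeta_\ell^2+1)-1$, which holds in a full neighborhood of $\zeta^{(k,\eps)}_\ell$ with no branch ambiguity, and then obtains $\phi'_{m,\eps,\ell}=\tfrac{L_m^2}{L_\ell^2}\,\zeta_\ell/\phi_{m,\eps,\ell}$ by implicit differentiation, plugging in \eqref{zetaellepsk} and \eqref{phimepsellzeta} at the end. That trick is cleaner precisely because it never asks the single-branch square root to be differentiable across its cut. One small overstatement in your write-up: the strict inequality $|z_{(k)}|>L_\ell$ is not automatic (it is an arithmetic coincidence whether $|z_{(k)}|$ equals one of the $L_\ell$, cf. the remark after Proposition~\ref{pro:residueswtF}); the paper only uses $|z_{(k)}|\ge L_m\ge L_\ell$, and \eqref{eq:zeta+Im} and \eqref{zetaellepsk} are stated with the non-strict inequality, so nothing is lost, but the phrase ``forces $|z_{(k)}|>L_\ell$'' should be dropped.
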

\begin{proof}
The equality \eqref{relchartslm} is a consequence of \eqref{eq:wtGNell} and of the definition of $U_{\eps(m^\vee)}\cup U_{m,\eps}$, whereas \eqref{zetaellepsk} and \eqref{phimepsellzeta}
follow from \eqref{eq:zeta+Im}, since $iz_{(k)} =|z_{(k)}| \geq L_m\geq L_\ell$.
Notice that 
$\phi^2_{m,\eps,\ell} (\zeta_{\ell})=\frac{L_m^2}{L_\ell^2} \; (\zeta_\ell^2+1)-1$.
So $(\phi^2_{m,\eps,\ell}) '(\zeta_{\ell})=2\; \frac{L_m^2}{L_\ell^2} \; \zeta_\ell$. 
On the other hand, $(\phi^2_{m,\eps,\ell})' (\zeta_{\ell})=2(\phi_{m,\eps,\ell})' (\zeta_{\ell})
\phi_{m,\eps,\ell}(\zeta_{\ell})$.
Hence
\begin{equation*}
\phi_{m,\eps,\ell}' (\zeta^{(k,\eps)}_\ell)= \frac{(\phi^2_{m,\eps,\ell}) '(\zeta^{(k,\eps)}_\ell)}{2\phi_{m,\eps,\ell}(\zeta^{(k,\eps)}_\ell)}
=\frac{L_m^2}{L_\ell^2}  \; \frac{\zeta^{(k,\eps)}_\ell}{\phi_{m,\eps,\ell}(\zeta^{(k,\eps)}_\ell)}
=\frac{L_m^2}{L_\ell^2}  \; \eps_\ell\eps_m \frac{\sqrt{|z_{(k)}|^2-L_\ell^2}}{\sqrt{|z_{(k)}|^2-L_m^2}}
\end{equation*}
by \eqref{zetaellepsk} and \eqref{phimepsellzeta}.
\end{proof}

\begin{pro}
\label{pro:residueswtF}
Keep the above notation. 
Furthermore, set  for $k \in \Zb$,
\begin{equation}
\label{eq:Sk}
 S_{(k)}=\{(\ell_1,\ell_2)\in \Zb_{\geq 0}^2; \ L^2_{1,\ell_1}+L^2_{2,\ell_2}=|z_{(k)}|^2\}\,.
\end{equation}
and, for $\ell_1,\ell_2\in \Zb_{\geq 0}$, 
\begin{eqnarray}
\label{eq:Cepsell}
C_{\ell_1,\ell_2}&=&p_1(i L_{1,\ell_1})p_2(i L_{2,\ell_2})
 \big( b_1 \frac{L_{2,\ell_2}}{L_{1,\ell_1}}+  b_2\frac{L_{1,\ell_1}}{L_{2,\ell_2}}\big)\,, \\
\label{eq:lambdaell}
\lambda(\ell_1,\ell_2)&=&(\rho_{\beta_1}+\ell_1)\beta_1+(\rho_{\beta_2}+\ell_2)\beta_2\,.
\end{eqnarray}
Then $C_{\ell_1,\ell_2}$ is a positive constant, and
the residue of the local expression of $\wt F$ with respect to the 
chart $(U_{\eps(m^\vee)}\cup U_{m,\eps}, \kappa_{m,\eps})$ 
at the point of $\M_{(N)}$ above $z_{(k)}$ and in the domain of this chart  is  
\begin{equation}
\label{wtFresidue2}
\Res_{\zeta_m=\zeta^{(k,\eps)}_m} \big(\wt F \circ \kappa_{m,\eps}^{-1}\big)(\zeta_m) 
= \frac{i\eps_m L_m^2}{\pi^2 \sqrt{|z_{(k)}|^2-L_m^2}} \; 
\sum_{(\ell_1,\ell_2)\in  S_{(k)}} C_{\ell_1,\ell_2}
(f\times \varphi_{\lambda(\ell_1,\ell_2)})(y)\,.
\end{equation}
\end{pro}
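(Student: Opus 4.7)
The plan is to isolate the terms of $\wt F$ in \eqref{eq:widetildeF} that actually contribute to the pole at $\zeta_m^{(k,\eps)}$, to pull each one back via Lemma \ref{lem:changecoord} to the chart $\kappa_\ell$ in which Lemma \ref{lem:chart-expressions} already computes its residue, and then to assemble the pieces. I would first note that $F_{(m)}$ is holomorphic on $W_{(m)}$ by Proposition \ref{pro:FextendedMainSection-Z}, and that for every $\ell>m$ the function $\wt G_{(N,\ell)}$ has all its singularities located above points of modulus strictly greater than $L_\ell\ge L_{m+1}>|z_{(k)}|$, by \eqref{eq:sing-wtGell}. Hence neither $F_{(m)}$ nor the second sum in \eqref{eq:widetildeF} contributes, and the residue equals $2\sum_{\ell=0}^m \Res_{\zeta_m=\zeta_m^{(k,\eps)}}(\wt G_{(N,\ell)}\circ\kappa_{m,\eps}^{-1})$.

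Next I would reorganize the remaining sum by indexing over pairs $(\ell_1,\ell_2)\in S_{(k)}$: by \eqref{eq:sing-wtGell} and \eqref{eq:wtGell}, each such pair produces exactly one singular contribution from the $\wt G_{1,\ell_1}$ summand (at the index $\ell^{(1)}$ characterized by $L_{\ell^{(1)}}=L_{1,\ell_1}$) and one from the $\wt G_{2,\ell_2}$ summand (at $\ell^{(2)}$ with $L_{\ell^{(2)}}=L_{2,\ell_2}$), and both indices automatically lie in $\{0,\dots,m\}$ because $L_{j,\ell_j}<|z_{(k)}|<L_{m+1}$. For each of these terms I would apply \eqref{relchartslm} together with the chain rule for residues to get
\[
\Res_{\zeta_m=\zeta_m^{(k,\eps)}}\big(\wt G_{(N,\ell)}\circ\kappa_{m,\eps}^{-1}\big) = \phi'_{m,\eps,\ell}(\zeta_\ell^{(k,\eps)})\cdot \Res_{\zeta_\ell=\zeta_\ell^{(k,\eps)}}\big(\wt G_{j,\ell_j}\circ\kappa_\ell^{-1}\big)\,,
\]
then substitute the explicit value of $\phi'_{m,\eps,\ell}(\zeta_\ell^{(k,\eps)})$ from \eqref{dephiellmeps} and the residue from Lemma \ref{lem:chart-expressions}. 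Using $\sqrt{|z_{(k)}|^2-L_{1,\ell_1}^2}=L_{2,\ell_2}$ when $L_\ell=L_{1,\ell_1}$ and the symmetric relation when $L_\ell=L_{2,\ell_2}$, the two contributions for the pair $(\ell_1,\ell_2)$ share the common prefactor $L_m^2/\sqrt{|z_{(k)}|^2-L_m^2}$ and produce, respectively, the $b_1L_{2,\ell_2}/L_{1,\ell_1}$ and $b_2L_{1,\ell_1}/L_{2,\ell_2}$ pieces of $C_{\ell_1,\ell_2}$. Summing over $(\ell_1,\ell_2)\in S_{(k)}$ yields \eqref{wtFresidue2}, and positivity of $C_{\ell_1,\ell_2}$ is immediate from the positivity statement at the end of Lemma \ref{lem:chart-expressions}.

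The main obstacle I expect is the sign bookkeeping. The factor $\phi'_{m,\eps,\ell}(\zeta_\ell^{(k,\eps)})$ contains $\eps_\ell\eps_m$, while the residue of $\wt G_{j,\ell_j}\circ \kappa_{j,\ell_j,-}^{-1}$ (we use the chart with $-$ sign since $z_{(k)}\in\C^-$) read from \eqref{2.11.1}--\eqref{2.11.2} carries $\pm$ signs stemming both from the chart and from the point $\zeta_\ell^{(k,\eps)}=\eps_\ell\zeta_{j,\ell_1,\ell_2}$ at which it is evaluated. All auxiliary signs $\eps_\ell$ with $\ell\ne m$ have to cancel, leaving the single factor $i\eps_m$ of \eqref{wtFresidue2}. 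Verifying this requires careful use of the parities of $p_j$ and $q_j$ (Remark \ref{rem:parity-pq}) and of the Weyl invariance of $\psi_z$ via \eqref{eq:parity-psi}, which together guarantee that the various $\epsilon_1,\epsilon_2$ entering in Lemma \ref{lem:chart-expressions} all yield the same value $(f\times\varphi_{\lambda(\ell_1,\ell_2)})(y)$.
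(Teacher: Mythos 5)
Your proposal follows essentially the same route as the paper's proof: you reduce to the residues of $\sum_{\ell=0}^m \wt G_{(N,\ell)}\circ\kappa_{m,\eps}^{-1}$ after observing that $F_{(m)}$ and the $\ell>m$ terms are regular near $\zeta_m^{(k,\eps)}$, pull each contribution back to the chart $\kappa_\ell$ via Lemma~\ref{lem:changecoord} and the chain rule for residues, insert $\phi'_{m,\eps,\ell}(\zeta_\ell^{(k,\eps)})$ from \eqref{dephiellmeps} and the residues from Lemma~\ref{lem:chart-expressions}, then pair the $\wt G_{1,\ell_1}$ and $\wt G_{2,\ell_2}$ contributions for each $(\ell_1,\ell_2)\in S_{(k)}$ using $|z_{(k)}|^2 = L_{1,\ell_1}^2+L_{2,\ell_2}^2$ to assemble $C_{\ell_1,\ell_2}$. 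Your remarks on the sign bookkeeping (the cancellation of the auxiliary $\eps_\ell$ factors against the signs in Lemma~\ref{lem:chart-expressions}, leaving only $\eps_m$) match what the paper also asserts at the end of its proof.
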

\begin{proof}
By \eqref{wtFresidue1}, 
\[
\Res_{\zeta_m=\zeta^{(k,\eps)}_m} \big(\wt F \circ \kappa_{m,\eps}^{-1}\big)(\zeta_m)=
 {\sum}^{(k)}_\ell \Res_{\zeta_m=\zeta^{(k,\eps)}_m} \big( \wt G_{(N,\ell)}
\circ \kappa_{m,\eps}^{-1}\big)(\zeta_m)\,.
\]
Recall that if $f$ is meromorphic in the disk $\Dg_r(b)=\{z \in \C: |z-b|<r\}$ and $\varphi$ is a diffeomorphism defined on $\Dg_s(a)$ with $b=\varphi(a)$, then 
$\Res_{z=b} f(z)= \Res_{z=a} (f\circ \varphi)(z)\varphi'(z)$. See e.g. \cite[Ch. VI, Theorem 3.2]{Heins}. Hence, by Lemma \ref{lem:changecoord} and \eqref{eq:wtGNell}, 
\begin{eqnarray*}
\Res_{\zeta_m=\zeta^{(k,\eps)}_m} \big( \wt G_{(N,\ell)}
\circ \kappa_{m,\eps}^{-1}\big)(\zeta_m)
&=& \Res_{\zeta_\ell=\zeta^{(k,\eps)}_\ell} \big( \wt G_{(N,\ell)}
\circ \kappa_{m,\eps}^{-1} \circ \phi_{m,\eps,\ell}\big)(\zeta_\ell) \phi_{m,\eps,\ell}'(\zeta^{(k,\eps)}_\ell)\\
&=&  
\frac{L_m^2}{L_\ell^2}  \; \eps_\ell\eps_m \frac{\sqrt{|z_{(k)}|^2-L_\ell^2}}{\sqrt{|z_{(k)}|^2-L_m^2}}\; \Res_{\zeta_\ell=\zeta^{(k,\eps)}_\ell} \big( \wt G_{(\ell)}
\circ \kappa_\ell^{-1}\big)(\zeta_\ell)\,.
\end{eqnarray*}
Write $\zeta_{j,\ell_j,\eps,k}=\zeta^{(k,\eps)}_\ell$ and $\eps_{j,\ell_j}=\eps_\ell$ if 
$L_{j,\ell_j}=L_\ell$. Then, by \eqref{wtFresidue1} and the above computation, 
\begin{eqnarray*}
&&  {\sum}^{(k)}_\ell \Res_{\zeta_m=\zeta^{(k,\eps)}_m} \big( \wt G_{(N,\ell)}
\circ \kappa_{m,\eps}^{-1}\big)(\zeta_m)\\
&=&
\frac{\eps_m L_m^2}{\sqrt{|z_{(k)}|^2-L_m^2}} \;   
{\sum}^{(k)}_\ell \; \frac{\eps_\ell}{L_\ell^2} \sqrt{|z_{(k)}|^2-L_\ell^2} \; \Res_{\zeta_\ell=\zeta^{(k,\eps)}_\ell} \big( \wt G_{(\ell)}
\circ \kappa_\ell^{-1}\big)(\zeta_\ell)\\
&=&
\frac{\eps_m L_m^2}{\sqrt{|z_{(k)}|^2-L_m^2}} \; \sum_{(1,\ell_1)\in  S_{(1,k)}}
\sum_{j=1}^2
\Big[ \frac{\eps_{j,\ell_j}}{L_{j,\ell_j}^2} \sqrt{|z_{(k)}|^2-L_{j,\ell_j}^2} 
\Res_{\zeta_{\ell_j}=\zeta_{j,\ell_j,\eps,k}}
(\wt G_{j,\ell_j}\circ \kappa_{j,\ell_j,-}^{-1})(\zeta_{\ell_j}) 
\Big]
\end{eqnarray*}
The residues from Lemma \ref{lem:chart-expressions} and the fact that 
$|z_{(k)}|^2=L_{1,\ell_1}^2+L_{2,\ell_2}^2$ yield \eqref{wtFresidue2}. Notice that the sign 
of the residue at $\zeta_{j,\ell_j,\eps,k}$ cancels the factor $\eps_{j,\ell_j}$.
\end{proof}

\begin{rem}
Finding the list of the singularities $z_{(k)}=-i|z_{(k)}|$ of $\wt F$ and, for each of them,  determining the set $S_{(1,k)}$, is the problem of finding which positive reals can be written 
as a sum of squares $L_{1,\ell_1}^2 +L_{2,\ell_2}^2$ with $L_{j,\ell_j}$ in the lattices $b_j(\rho_{\beta_j}+\Zb_{\geq 0})$ ($j=1,2$), and in how many different ways. This is a variation of the classical problem of finding the positive integers which can be represented as a sum of squares of two integers and in how many ways. See e.g. \cite[section 6.7.4]{Lovett}.

Notice that the first singularity occurs at $z_{(0)}=-i\sqrt{L_{1,0}^2+L_{2,0}^2}=-i\sqrt{\langle \rho,\rho\rangle}$ and in this case $S_{(1,0)}$ contains only $(1,0)$.  
\end{rem}

%
%
%

\subsection{Meromorphic extension of the resolvent}
\label{subsection:meroextresolvent}

Recall that $\C^-=\{z\in \C;\Im z<0\}$ denotes the lower half plane, $L_{1,0}$ and $L_{2,0}$ are as in 
\eqref{eq:Lj}, and $L=\min\{L_{1,0},L_{2,0}\}$. In this subsection we meromorphically extend the resolvent
$z \mapsto [R(z)f](y)$ (where $f\in C_c^\infty(\X)$ and $y\in \X$ are arbitrarily fixed) from $\C^- \setminus i(-\infty,-L]$ across the half-line  $ i(-\infty,-L]$. As before, we shall omit the dependence on $f$ and $y$ from the notation and write $R(z)$ instead of $[R(z)f](y)$. This simplification of notation will be employed wherever it is appropriate.

The meromorphic extension of the resolvent will be deduced from that of $F$ obtained in the previous section. In fact, Proposition \ref{pro:holoextRF} shows that on an open subset of the upper half-plane we can write
\begin{equation}
\label{eq:holoextRF-mod}
R(z)=H(z)\,+\,\pi i\, F(z)\,,
\end{equation}
where $H$ is a holomorphic function on a domain containing $\C^-$. 

We keep the notation introduced in subsection \ref{subsection:extIm<0}. 

The resolvent $R$ can be lifted and meromorphically extended along the curve $\gamma_{N}$ in $\M_{\gamma_N}$. Its singularities 
(i.e. the resonances of the Laplacian) are those  of the meromorphic extension $\wt F$ of $F$ and are located at the  points 
of $\M_{\gamma_N}$ above the elements $z_{(k)}$. They are simple poles. The precise description is given by the following theorem.

\begin{thm}
\label{thm:meroextshiftedLaplacian}
Let $f \in C^\infty_c(\X)$ and $y \in \X$ be fixed. Let $N\in \mathbb N$ and let $\gamma_{N}$ be the curve in $\M_{(N)}$
given by (\ref{eq:gamma}). Then the resolvent $R(z)=[R(z)f](y)$ lifts as a meromorphic function to the neighborhood $\M_{\gamma_{N}}$ of the curve $\gamma_{N}$ in $\M_{(N)}$. We denote the lifted meromorphic function by
$\wt R_{(N)}(z,\zeta)=\big[\wt R_{(N)}(z,\zeta)f\big](y)$.

The singularities of $\wt R_{(N)}$ are at most simple poles at the points $(z_{(k)},\zeta^{(k,\eps)})\in \M_{(N)}$ with $k\in \Zb_{\geq 0}$ 
so that $|z_{(k)}|<L_{N+1}$ and $\eps \in\{\pm 1\}^{N+1}$. Explicitly, for $(m,\eps)\in \{0,1,\dots,N\}\times \{\pm 1\}^{N+1}$,
\begin{eqnarray}
\label{eq:RNnearwn}
\wt R_{(N)}(z,\zeta)=\wt H_{(N,m,\eps)}(z,\zeta)+2\pi i \sum_{\ell=0}^m \wt G_{(N,\ell)}(z,\zeta) \qquad
((z,\zeta)\in  U_{\eps(m^\vee)} \cup U_{m,\eps})
\,,
\end{eqnarray}
where $\wt H_{(N,m,\eps)}$ is holomorphic and $\wt G_{(N,\ell)}(z,\zeta)$
is in fact independent of $N$ and $\eps$ (but dependent on $f$ and $y$, which are omitted from the notation).
The singularities of $\wt R_{(N)}(z,\zeta)$ in $U_{\eps(m^\vee)} \cup U_{m,\eps}$ are simple poles at the points $(z_{(k)},\zeta^{(k,\eps)})$ belonging to $U_{\eps(m^\vee)} \cup U_{m,\eps}$ The residue of the local expression of $\wt R_{(N)}$ at this points is $i\pi$ times the right-hand side of \eqref{wtFresidue2}.
\end{thm}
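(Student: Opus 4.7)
The plan is to lift $R$ to $\M_{\gamma_N}$ via the identity $R = H + \pi i F$ from Proposition~\ref{pro:holoextRF}, using the meromorphic lift $\wt F$ of $F$ provided by Theorem~\ref{thm:meroliftF}. The central observation is that $\wt F$ on each patch $U_{\eps(m^\vee)} \cup U_{m,\eps}$ decomposes via \eqref{eq:widetildeF} into a holomorphic part plus the polar contribution $2\sum_{\ell=0}^m \wt G_{(N,\ell)}$, and the latter is precisely the source of the poles and residues of the resolvent.

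First I would extend the identity $R = H + \pi i F$, stated on $Q \subset \C^+$ in Proposition~\ref{pro:holoextRF}, to the whole domain $\C^- \setminus i(-\infty, -L]$. Choosing $\gamma_+$ in the proof of that proposition so as to avoid $i[L,+\infty)$, the three functions $R$, $H$, and $F$ are simultaneously holomorphic on a connected open region containing $Q$ and $\C^- \setminus i(-\infty, -L]$, so the identity propagates by analytic continuation. I would then set
\[
\wt R_{(N)}(z,\zeta) := H \circ \pi_{(N)}(z,\zeta) + \pi i \, \wt F(z,\zeta)
\]
on $\M_{\gamma_N}$; the first summand is holomorphic on $\M_{\gamma_N}$ because $H$ is holomorphic on $\C^-$, and $\wt F$ is meromorphic by Theorem~\ref{thm:meroliftF}. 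Evaluation along the principal section $\sigma_{(N)}^+$ gives $\wt R_{(N)} \circ \sigma_{(N)}^+ = H + \pi i F = R$, confirming that $\wt R_{(N)}$ is the desired meromorphic lift.

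Next I would substitute the piecewise formula \eqref{eq:widetildeF} for $\wt F$. On $U_{\eps(m^\vee)} \cup U_{m,\eps}$ this produces
\[
\wt R_{(N)}(z,\zeta) = \wt H_{(N,m,\eps)}(z,\zeta) + 2\pi i \sum_{\ell=0}^m \wt G_{(N,\ell)}(z,\zeta),
\]
where $\wt H_{(N,m,\eps)}$ collects $H(z)$, $\pi i F_{(m)}(z)$, and the antisymmetric contributions $2\pi i[\wt G_{(N,\ell)}(z,\zeta) - \wt G_{(N,\ell)}(z,-\zeta)]$ for $m < \ell \leq N$ with $\eps_\ell = -1$. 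These antisymmetric terms are holomorphic on $U_{\eps(m^\vee)} \cup U_{m,\eps}$ because $|z| < L_{m+1} \leq L_\ell$ holds throughout this set when $\ell > m$, and by \eqref{eq:sing-wtGell} the $\wt G_{(\ell)}$ have no singularities in that regime; consequently $\wt H_{(N,m,\eps)}$ is holomorphic. The independence of $\wt G_{(N,\ell)}$ from $N$ and $\eps$ is immediate from its definition $\wt G_{(\ell)} \circ \pi_{(N,\ell)}$. The residue assertion then follows at once: the polar part of $\wt R_{(N)}$ is $2\pi i \sum_{\ell=0}^m \wt G_{(N,\ell)}$, whereas that of $\wt F$ is $2\sum_{\ell=0}^m \wt G_{(N,\ell)}$, so residues of $\wt R_{(N)}$ equal $\pi i$ times those of $\wt F$, and Proposition~\ref{pro:residueswtF} yields exactly $i\pi$ times \eqref{wtFresidue2}.

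The main obstacle is the first step: the identity $R = H + \pi i F$ is written only on $Q$ in Proposition~\ref{pro:holoextRF}, and one must verify that the analytic continuation of $R$ from $Q$ down through the positive real axis matches the function $H + \pi i F$ on $\C^-$ rather than $H$ alone (which is what the naive integral formula for $R$ would produce on $\C^-$, since the contour deformation used to define $H$ picks up no residue for $z$ in the lower half-plane). While this is implicit in the construction of the holomorphic extension of $R$ in that proposition, it deserves to be made explicit so that the lift along $\sigma_{(N)}^+$ is unambiguous.
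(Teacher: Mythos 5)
Your proof follows the paper's own route exactly: the paper deduces Theorem~\ref{thm:meroextshiftedLaplacian} from the identity $R = H + \pi i F$ of Proposition~\ref{pro:holoextRF} (with $H$ holomorphic on a domain containing $\C^-$) combined with the meromorphic lift $\wt F$ of Theorem~\ref{thm:meroliftF} and the residue computation in Proposition~\ref{pro:residueswtF}, and you spell out the same steps, including the correct observation that $\wt H_{(N,m,\eps)}$ absorbs $H$, $\pi i F_{(m)}$, and the antisymmetric terms which are regular on $U_{\eps(m^\vee)}\cup U_{m,\eps}$. Your closing caveat about the analytic continuation of $R$ across the positive real axis being $H + \pi i F$ rather than $H$ alone is a legitimate point of rigor, and you correctly note it is already resolved by the construction in Proposition~\ref{pro:holoextRF}.
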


\begin{rem}
Unlike the case of $\SL(3,\R)/\SO(3)$, the resonances usually do not coincide with the branching points of the Riemann surface $\M_{(N)}$ 
to which the resolvent lifts as a meromorphic function. 
In fact, one can check from the table in Remark \ref{rem:classification} that, when $\X_2=\X_1$ and $m_\beta$ is odd, none of the singularities 
happens to be at one of the branching  points $-iL_\ell$ unless $\X_1=\SU(2n,1)/\Sg(\Ug(2n)\times \Ug(1))$ (which is the only case where $\rho_\beta$ is an integer).
\end{rem}

\section{One odd multiplicity: holomorphic extensions}
\label{section:holoext-even}

In this section we show that when only one of the two multiplicities $m_{\beta_1}$ and $m_{\beta_2}$ is odd, then the resolvent of the Laplacian on $\X_1 \times \X_2$ admits a holomorphic extension to a suitable Riemann surface above $\C^\times$. As before, we assume 
without loss of generality that $m_{\beta_2}$ is even.

Fix $\ell\in\Bbb Z_{\geq 0}$. Then, as in \eqref{eq:Mjk}, we consider the Riemann surface
above $\C^\times$
\[
\M_{1,\ell}=\Big\{(z,\zeta)\in\C^\times\times(\C\setminus\{i, -i\})\ :\ \zeta^2=\Big(i\frac{L_{1,\ell}}{z}\Big)^2-1\Big\}
\]
with projection map $\pi_{1,\ell}:\M_{1,\ell} \ni (z,\zeta)\to z\in \C^\times$ and branching points 
$\pm (iL_{1,\ell},0)$.
Notice that Lemma \ref{2.8} holds in the present situation with $q_2=1$ and gives the lift $\wt G_{1,\ell}$ of $G_{1,l}$ to $\M_{1,\ell}$. Since now $q_2$ is holomorphic, $\wt G_{1,\ell}$ turns out to be holomorphic as well.
\begin{lem}
\label{lemma:holo-wtG-even}
Suppose that $m_{\beta_1}$ is odd and $m_{\beta_2}$ is even. Then, in the notation of subsection
\ref{subsection:contours-even-mul},
\begin{eqnarray}\label{eq:tG1k-even}
\wt G_{1,\ell}:\M_{1,\ell}\ni (z,\zeta)&\to&
\frac{b_1}{\pi} \, L_{1,\ell}\,  p_1(i L_{1,\ell})p_2(-z\zeta)  \psi_z\big(\frac{i L_{1,\ell}}{z}-\zeta\big) \in\C
\end{eqnarray}
is the holomorphic extension to $\M_{1,\ell}$ of a lift of $G_{1,\ell}$.
\end{lem}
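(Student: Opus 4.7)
The plan is to mimic the argument of Lemma \ref{2.8}, exploiting the crucial simplification that $q_2\equiv 1$ when $m_{\beta_2}$ is even (subsection \ref{subsection:contours-even-mul}). The factor $q_2(-iz\zeta)$ that produced the simple poles in \eqref{eq:tG1k} is therefore absent, and I expect the same construction to yield a globally holomorphic object on $\M_{1,\ell}$ with no further work beyond verifying the lifting identity.

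First I would verify that $\wt G_{1,\ell}$ is a lift of $G_{1,\ell}$. To this end I introduce the holomorphic section
\[
\sigma_{1,\ell}^+:\C\setminus i\big((-\infty,-L_{1,\ell}]\cup[L_{1,\ell},+\infty)\big)\ni z\mapsto\bigl(z,\zeta_{1,\ell}^+(z)\bigr)\in\M_{1,\ell}
\]
of $\pi_{1,\ell}$, with $\zeta_{1,\ell}^+(z)$ as in \eqref{eq:zeta+}. Applying \eqref{eq:cinverse} and \eqref{eq:scinverse} with $w=iL_{1,\ell}/z$ gives
\[
\cz^{-1}\bigl(\tfrac{iL_{1,\ell}}{z}\bigr)=\tfrac{iL_{1,\ell}}{z}-\zeta_{1,\ell}^+(z),\qquad (\sz\circ\cz^{-1})\bigl(\tfrac{iL_{1,\ell}}{z}\bigr)=-\zeta_{1,\ell}^+(z).
\]
Substituting these into \eqref{eq:G1l-even} and noting that $C_{1,\ell}=\tfrac{b_1}{\pi}L_{1,\ell}\,p_1(iL_{1,\ell})$ should yield $\wt G_{1,\ell}\circ\sigma_{1,\ell}^+=G_{1,\ell}$ on the domain of the section, establishing the lifting property.

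Next I would check holomorphicity on all of $\M_{1,\ell}$. The polynomial $p_2(-z\zeta)$ is entire in $(z,\zeta)$, and the scalar $p_1(iL_{1,\ell})$ is harmless. The only delicate factor is $\psi_z\bigl(\tfrac{iL_{1,\ell}}{z}-\zeta\bigr)$: the function $\psi_z(w)$ is entire in $z$ and holomorphic in $w\in\C^\times$ by the Paley--Wiener estimates underlying \eqref{eq:exptype-conv}, so the task reduces to showing that the argument $\tfrac{iL_{1,\ell}}{z}-\zeta$ never vanishes on $\M_{1,\ell}$. If it did, combining $\zeta=iL_{1,\ell}/z$ with the defining relation $\zeta^2=(iL_{1,\ell}/z)^2-1$ would force $0=-1$, a contradiction.

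The only substantive step is this non-vanishing check; everything else is bookkeeping inherited from Lemma \ref{2.8} with the $q_2$-factor suppressed. In particular, no pole analysis is needed, which is the essential qualitative difference from the two-odd-multiplicity case and the reason I do not anticipate any serious obstacle.
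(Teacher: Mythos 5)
Your proposal is correct and takes essentially the same approach as the paper, which dispatches this lemma in one sentence by citing Lemma~\ref{2.8} with $q_2\equiv 1$: the lifting verification via $\sigma_{1,\ell}^+$ and \eqref{eq:cinverse}--\eqref{eq:scinverse} is identical, and the absence of the singular cotangent factor $q_2$ immediately upgrades meromorphic to holomorphic. Your explicit non-vanishing check that $\tfrac{iL_{1,\ell}}{z}-\zeta\neq 0$ on $\M_{1,\ell}$ (so that $\psi_z$ is evaluated inside $\C^\times$) is a small but worthwhile precision that the paper leaves implicit in Lemma~\ref{2.8}.
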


For the piecewise extension of $F$ along the negative imaginary half-line $-i[b_1\rho_{\beta_1},+\infty)$, recall from Proposition \ref{pro:local-ext-F-even} that for $v\in I_{1,m}=b_1\rho_{\beta_1}  +b_1[m_1,m_1+1)=[L_{j,m},L_{j,m+1})$ with $m\in \Zb_{\geq 0}$ there exists 
$0< r_v <1$ and an open neighborhood $W_v$ of $-iv$ in $\C$ such that
\begin{equation}
\label{eq:Fresidues1-even}
F(z)=F_{r_v}(z)+ 2\sum_{\ell=0}^{m} G_{1,\ell}(z) 
\qquad (z\in W_v\setminus i\R)\, ,
\end{equation}
where the function $F_{r_v}$ is holomorphic in $W_v$. 
As before, \eqref{eq:Fresidues1-even} extends to $I_{1,-1}=(0,L_{1,0})$ by allowing empty sums.
By possibly shrinking $W_v$, we may assume that $W_v$ is an open disk around $-iv$ such that
$$
W_v\cap i\R \subseteq
\begin{cases}
  -iI_{1,m}&\text{ for } v\in I_{1,m}^\circ,\\
  -i(I_{1,m}-\frac{b_1}{2})&\text{ for } v=L_{1,m}\,.
\end{cases}
$$
In addition, for $0<v < b_1\rho_{\beta_1}$ we define $W_v$ to be an open ball around $-iv$ in $\C$ such that $W_{v}\cap i\R\subset (0,b_1\rho_{\beta_1})$.

If $v\in I_{1,m}$, $v'\geq b_1\rho_{\beta_1}$ and $W_v\cap W_{v'}\not=\emptyset$, then we obtain for $z\in W_v\cap W_{v'}$
$$
F_{r_{v'}}(z)=F_{r_v}(z)+
\begin{cases}
  0&\text{ for }   v'\in I_{1,m},\\
  2\, G_{1,m_1}(z)&\text{ for } v'\in I_{1,m_1-1}. \\
\end{cases}
$$
Now we set
\begin{eqnarray*}
W_{(m)}&=&\bigcup_{v\in I_{1,m}} W_v \qquad (m\in \Zb_{\geq 0})\,,\\
W_{(-1)}&=&\bigcup_{v\in I_{1,-1}} W_v \,.
\end{eqnarray*}
For $m\in\Zb_{\geq 1}$ we define a holomorphic function $F_{(m)}: W_{(m)}\to \C$ by 
\[
F_{(m)}(z)= \begin{cases}
F_{r_v}(z) &\text{for $m\in \Zb_{\geq 0}$, $v\in I_{1,m}$ and $z\in W_v$}\\
F(z) &\text{for $m=-1$ and $z\in W_{(-1)}$}\,.
\end{cases}
\]
We therefore obtain the following analogue of Proposition \ref{cor:FextendedMainSection}.

\begin{pro}
\label{cor:FextendedMainSection-even}
For every integers $(m_1,m_2)\in \Zb_{\ge -1}^2$ we have
 \begin{equation}
\label{eq:Fresidues2-even}
F(z)=F_{(m)}(z)+2\sum_{\ell_1=0}^{m} G_{1,\ell}(z)
\qquad (z\in W_{(m)}\setminus i\R)\,,
\end{equation}
where $F_{(m)}$ is holomorphic in $W_{(m)}$, the $G_{1,\ell}$ are as in \eqref{eq:G1l-even}, and empty sums are defined to be equal to $0$.
\end{pro}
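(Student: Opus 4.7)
The plan is to verify that the piecewise construction sketched immediately before the statement (i.e.\ the definitions of $W_{(m)}$ and $F_{(m)}$) produces a well-defined holomorphic function on $W_{(m)}$ and that the decomposition \eqref{eq:Fresidues2-even} then follows at once. Since only one multiplicity is odd, the construction is strictly simpler than the one leading to Proposition \ref{cor:FextendedMainSection}: there is no second family of residue functions to track, so the only issue is the compatibility of the local representatives $F_{r_v}$ coming from Proposition \ref{pro:local-ext-F-even}.

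First I would invoke Proposition \ref{pro:local-ext-F-even}: for every $v\in(0,+\infty)$ there is a radius $0<r_v<1$ and an open disk $W_v$ around $-iv$ (chosen as specified, so that $W_v\cap i\R$ is contained in the interior of a single $-iI_{1,m}$, or of $-iI_{1,-1}=-i(0,L_{1,0})$) such that for $z\in W_v\setminus i\R$ we have $F(z)=F_{r_v}(z)+2\sum_{\ell=0}^{m}G_{1,\ell}(z)$, with $F_{r_v}$ holomorphic on all of $W_v$ and the sum empty if $m=-1$.

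Second, I would establish well-definedness. Fix $m\in\Zb_{\ge-1}$ and take two points $v,v'\in I_{1,m}$ (or both in $(0,L_{1,0})$ if $m=-1$) with $W_v\cap W_{v'}\ne\emptyset$. Since both disks were chosen so that their real traces avoid every boundary point $L_{1,k}$ outside $I_{1,m}$, the integer $m$ appearing in the local formula is the same for $v$ and for $v'$. Subtracting the two representations of $F$ on $W_v\cap W_{v'}\setminus i\R$ gives $F_{r_v}(z)=F_{r_{v'}}(z)$ off the imaginary axis; both sides are holomorphic on the whole overlap $W_v\cap W_{v'}$, so by the identity principle they agree everywhere on it. Consequently $F_{(m)}(z):=F_{r_v}(z)$ for $z\in W_v\subseteq W_{(m)}$ is an unambiguous definition, and the resulting function is holomorphic on $W_{(m)}=\bigcup_{v\in I_{1,m}}W_v$ as a glueing of holomorphic functions with compatible overlaps. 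Formula \eqref{eq:Fresidues2-even} is then immediate from \eqref{eq:Fresidues1-even}.

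The main (and only real) obstacle is the control near the endpoints of $I_{1,m}$, namely that an overlap $W_v\cap W_{v'}$ within a single $W_{(m)}$ never contains a branching point $-iL_{1,k}$ with $k\ne m,m+1$ where an extra residue function $G_{1,k}$ would need to be added. This is taken care of by the explicit restriction $W_v\cap i\R\subseteq -iI_{1,m}$ for $v\in I_{1,m}^\circ$ and $W_v\cap i\R\subseteq -i(I_{1,m}-\tfrac{b_1}{2})$ for $v=L_{1,m}$ imposed before the proposition, so no further analysis is required; in particular, there is no need for the case-by-case compatibility computation \eqref{eq:Frvv'} that was unavoidable in the two-odd-multiplicity setting, because there is no interaction between residues coming from two distinct symmetric spaces.
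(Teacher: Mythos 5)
Your proof is correct and follows essentially the same approach the paper uses: invoke Proposition \ref{pro:local-ext-F-even} for the local formula, check that the representatives $F_{r_v}$ and $F_{r_{v'}}$ agree on overlaps of disks centered over a single interval $I_{1,m}$ (the ``$0$'' case in the paper's displayed compatibility formula), upgrade the agreement off $i\R$ to agreement on the whole overlap by the identity principle, and glue. You are also right that, since $q_2\equiv 1$ kills the second family of residue functions, the only nontrivial check is the overlap-compatibility within one interval; the paper notes the same simplification.
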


We can continue $F$ across $-i[0,+\infty)$ inductively, as in the case of two odd multiplicities. The situation is however easier, either because there is only one regularly spaced sequence of branching points (there are no branching points of the form $-iL_{2,\ell}$, since they originated from the singularities of the Plancherel density of $\X_2$), or because the continuation turns out to be holomorphic (because of Lemma \ref{lemma:holo-wtG-even}).

Let $N$ be a fixed positive integer and let $\M_{(N)}$ be defined by \eqref{eq:MN} with $\M_\ell$ replaced by $\M_{1,\ell}$ for $0\leq \ell\leq N$.  Then $\M_{(N)}$  is a Riemann surface which is a $2^{N+1}$-to-$1$ cover of $\C^-$, except when $z=-i L_{1,\ell}$ with  $\ell\in\Zb_{\geq 0}$,  $0\leq \ell\le N$. The fiber of the latter points consists of $2^N$ branching points of $\M_{(N)}$.  

For $0\leq \ell\leq N$ define 
\[
\wt G_{(N,\ell)}=\wt G_{1,\ell} \circ \pi_{N,\ell}\,,
\]
where $\pi_{N,\ell}: \M_{(N)} \ni(z,\zeta)\to (z,\zeta_\ell) \in \M_{1,\ell}$.  Furthermore, define as in section \ref{subsection:extIm<0}, the open sets $U_{m,\eps}$, $U_{\eps(m^\vee)}$  (with $m\in \Zb_{\geq 0}$, $\eps\in \{\pm 1\}^{N+1}$), the branched curve $\gamma_N$  lifting $-i(0,L_{1,N+1})$ to $\M_{(N)}$, and the open neighborhood $\M_{\gamma_N}$ of $\gamma_N$ in 
$\M_{(N)}$. Then we have the following analogues of Theorems \ref{thm:meroliftF} and \ref{thm:meroextshiftedLaplacian}.
\begin{thm}
\label{thm:holoext-even}
Let $f\in C^\infty(\X)$ and $y \in \X$ be fixed. Let $N$ be a positive integer. 
Then the function $F$ from Proposition \ref{pro:local-ext-F-even} (which depends on $f$ and $y$) admits a holomorphic extension $\wt F$ to $\M_{\gamma_N}$ given by \eqref{eq:widetildeF}.
The resolvent $R(z)=[R(z)f](y)$ of the Laplacian lifts and extends holomorphically to the function 
$\wt R_{(N)}$ given on $\M_{\gamma_N}$ by the formula 
\[
\wt R_{(N)}(z,\zeta)=\wt H(z,\zeta)+i \pi\wt F(z,\zeta)\,,
\]
where $\wt H(z,\zeta)=H(z)$ is a lift of the holomorphic function $H$ of Proposition \ref{pro:FextendedMainSection-Z}. Consequently, the Laplacian has no resonances.
\end{thm}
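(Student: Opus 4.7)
The approach mirrors that of Theorem \ref{thm:meroliftF}, with two decisive simplifications specific to the one-odd-multiplicity setting. By Proposition \ref{cor:FextendedMainSection-even}, on the negative imaginary axis there is only the single family of branching points $\{-iL_{1,\ell}\}_{\ell\geq 0}$, since the Plancherel density of $\X_2$ is nonsingular and $q_2 \equiv 1$. Moreover, Lemma \ref{lemma:holo-wtG-even} shows that each lift $\wt G_{1,\ell}$ extends \emph{holomorphically} (not merely meromorphically) to the whole of $\M_{1,\ell}$. These two facts are the input that will allow the resolvent to extend without picking up any poles.

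First, I would construct the lift $\wt F$ on $\M_{\gamma_N}$ by means of formula \eqref{eq:widetildeF}, applied with the index $\ell$ running through $\{0,1,\dots,N\}$ and with $\wt G_{(N,\ell)}(z,\zeta) := \wt G_{1,\ell}(z,\zeta_\ell)$ obtained via the projection $\pi_{N,\ell}\colon \M_{(N)}\to \M_{1,\ell}$. On each open piece $U_{\eps(m^\vee)}\cup U_{m,\eps}$ the section $\sigma_\eps$ pulls formula \eqref{eq:widetildeF} back to the identity $F(z)=F_{(m)}(z)+2\sum_{\ell=0}^m G_{1,\ell}(z)$ provided by Proposition \ref{cor:FextendedMainSection-even}, so $\wt F$ is a lift of $F$ wherever the definition applies. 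The consistency on overlaps of the covering $\{U_{\eps(m^\vee)}\cup U_{m,\eps}\}$ around each branching point $-iL_{1,m+1}$ follows verbatim from the combinatorial argument used to prove \cite[Theorem 19]{HPP14}: crossing a branching point interchanges the two sheets in the $\zeta_m$-coordinate, and the correcting terms $\wt G_{(N,\ell)}(z,\zeta)-\wt G_{(N,\ell)}(z,-\zeta)$ in \eqref{eq:widetildeF} are designed precisely to absorb the resulting jump.

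Second, I would verify holomorphicity by inspecting each summand of \eqref{eq:widetildeF}. The piece $F_{(m)}$ is holomorphic on $W_{(m)}$ by Proposition \ref{cor:FextendedMainSection-even}, and each $\wt G_{1,\ell}$ is holomorphic on $\M_{1,\ell}$ by Lemma \ref{lemma:holo-wtG-even}, so every pullback $\wt G_{(N,\ell)}$ is holomorphic on $\M_{(N)}$. Since \eqref{eq:widetildeF} is a finite sum of holomorphic functions, $\wt F$ is holomorphic on $\M_{\gamma_N}$. For the resolvent I would then lift the identity $R(z)=H(z)+i\pi F(z)$ of Proposition \ref{pro:holoextRF}: the function $H$ is holomorphic on a domain containing $\C^-$, so $\wt H := H\circ \pi_{(N)}$ is holomorphic on $\M_{(N)}$, and $\wt R_{(N)}=\wt H+i\pi\wt F$ is the required holomorphic extension of the resolvent to $\M_{\gamma_N}$. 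Since $N$ was arbitrary, the lifted resolvent is holomorphic along the entire branched lift of $-i(0,+\infty)$; together with the holomorphicity in $\C\setminus\bigl((-\infty,0]\cup i(-\infty,-L]\bigr)$ from Proposition \ref{pro:holoextRF}, this yields no poles, i.e. no resonances.

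The main obstacle in principle is the gluing of local pieces across the branching points $-iL_{1,m}$, but in practice this is already handled by the machinery developed for the two-odd-multiplicity case in Section \ref{section:meroext-odd}: the formula \eqref{eq:widetildeF} and the sheet-labelling by $\eps\in\{\pm1\}^{N+1}$ transfer directly, with the terms originating from singularities of the Plancherel density of $\X_2$ simply absent. The genuinely new --- but essentially trivial --- observation is that holomorphicity of every summand forces $\wt F$, and hence $\wt R_{(N)}$, to be holomorphic throughout $\M_{\gamma_N}$.
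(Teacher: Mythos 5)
Your proposal is correct and follows the route the paper intends: apply \eqref{eq:widetildeF} with the single family of branching points $-iL_{1,\ell}$, invoke Lemma \ref{lemma:holo-wtG-even} to get holomorphicity (not merely meromorphicity) of each $\wt G_{1,\ell}$ and hence of $\wt F$, and lift the identity $R = H + i\pi F$ of Proposition \ref{pro:holoextRF}. The paper leaves the proof implicit as ``the analogue of Theorems \ref{thm:meroliftF} and \ref{thm:meroextshiftedLaplacian},'' and your reconstruction matches it (you also correctly read the reference to ``$H$ of Proposition \ref{pro:FextendedMainSection-Z}'' in the theorem statement as pointing in fact to the $H$ from Proposition \ref{pro:holoextRF}).
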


\section{The residue operators}
\label{section:residueops}

In this section we come back to the case where both multiplicities $m_{\beta_1}$ and $m_{\beta_2}$ are odd. By Theorem \ref{thm:meroextshiftedLaplacian}, for every positive integer $N$, the resolvent of the (shifted) Laplacian of $\X=\X_1\times \X_2$ admits a meromorphic continuation $\wt R_{(N)}$ on suitable Riemann surface $\M_{(N)}$. The extended resolvent has simple poles (i.e. resonances) at the points of  $\M_{(N)}$ above the 
negative purely imaginary values $z_{(k)}$ ($k\in \Zb_{\geq 0}$) introduced in section \ref{subsection:extIm<0}. 

Associating with $f\in C^\infty(\X)$ the residue at the point $(z_{(k)},\zeta^{(\eps,k}))$ of the extended resolvent $\wt R_{(N)}(z)f$ yields a continuous linear operator
\[
{\Res}_{N,k} : C^\infty_c(\X) \to C^\infty(\X)\,, 
\]
where 
\begin{equation}
\label{eq:Reskeps}
{\Res}_{N,k} f= \sum_{(\ell_1,\ell_2)\in  S_{(k)}} C_{\ell_1,\ell_2}
\; (f\times \varphi_{\lambda(\ell_1,\ell_2)}) \qquad
(f\in C^\infty_c(\X))
\end{equation}
and $S_{(k)}$, $C_{\ell_1,\ell_2}$, $\varphi_{\lambda(\ell_1,\ell_2)}$ are as in 
\eqref{eq:Sk}, \eqref{eq:lambdaell} and \eqref{eq:Cepsell}, respectively. 
 Notice that ${\Res}_{N,k}$ is independent of the choice of the point in the fiber of $z_{(k)}$.


The element $\lambda(\ell_1,\ell_2)$ given by \eqref{eq:lambdaell} is a highest restricted weight of $\X$. Indeed, for $j=1,2$ we have $(\lambda(\ell_1,\ell_2)-\rho)_{\beta_j}=\ell_j\in \Zb_{\geq 0}$. Recall the operator $\mathcal{R}_\lambda$ from \eqref{eq:Rlambda}.
By the properties of the eigenspace representations given in subsection \ref{subsection:X},
\[
\mathcal{R}_{\lambda(\ell_1,\ell_2)}\big(C^\infty_c(\X)\big)=\mathcal{E}_{\lambda(\ell_1,\ell_2),\G}(\X)
=\mathcal{E}_{(\rho_{\beta_1}+\ell_1)\beta_1,\G}(\X_1)\otimes \mathcal{E}_{(\rho_{\beta_2}+\ell_2)\beta_2,\G}(\X_2)
\]
is the finite dimensional irreducible spherical representation of $\G$ of highest restricted weight $\lambda(\ell_1,\ell_2)$.

We  therefore obtain the following proposition.

\begin{pro}\label{pro:rank}
Let $N$ be a positive integer and let $k\in \Zb_{\geq 0}$ be so that $|z_{(k)}|<L_{N+1}^2$. 
Then the (resolvent) residue operator $R_{N,k}$ at $(z_{(k)},\zeta^{(\eps,k)})\in \M_{(N)}$ has image
\begin{eqnarray*}
{\Res}_{N,k}\big(C^\infty_c(\X)\big)&=&
\oplus_{(\ell_1,\ell_2)\in S_{(k)}} 
\mathcal{E}_{\lambda(\ell_1,\ell_2),\G}(\X)\\
&=&\oplus_{(\ell_1,\ell_2)\in S_{(k)}}  \big(\mathcal{E}_{(\rho_{\beta_1}+\ell_1)\beta_1,\G}(\X_1)\otimes \mathcal{E}_{(\rho_{\beta_2}+\ell_2)\beta_2,\G}(\X_2)\big)\,.
\end{eqnarray*}
In particular, ${\Res}_{N,k}$ has finite rank and is independent of the choice of the point $(z_{(k)},\zeta^{(\eps,k)})$ in the fiber of $z_{(k)}$ in $\M_{(N)}$.
\end{pro}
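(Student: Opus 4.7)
The plan is to show that the image of $\Res_{N,k}$, as characterized by the explicit formula \eqref{eq:Reskeps}, is precisely the vector sum of the images $\mathcal{R}_{\lambda(\ell_1,\ell_2)}(C_c^\infty(\X))$ as $(\ell_1,\ell_2)$ ranges over $S_{(k)}$, and then invoke the results from subsection \ref{subsection:X} together with a disjointness argument to conclude that this sum is direct and has the claimed decomposition.

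First I would read off from \eqref{eq:Reskeps} that $\Res_{N,k}(C_c^\infty(\X)) \subseteq \sum_{(\ell_1,\ell_2)\in S_{(k)}} \mathcal{R}_{\lambda(\ell_1,\ell_2)}(C_c^\infty(\X))$, since the constants $C_{\ell_1,\ell_2}$ are nonzero (as noted in \eqref{eq:Cepsell}). The opposite inclusion requires a small argument: by varying the test function $f$ appropriately (e.g., using the fact that the spherical Fourier transforms $\widehat{f_y}(\lambda(\ell_1,\ell_2))$ can be prescribed independently at the finitely many points $\lambda(\ell_1,\ell_2)$ for $(\ell_1,\ell_2)\in S_{(k)}$, by a standard Paley--Wiener density argument), one shows that each summand $\mathcal{R}_{\lambda(\ell_1,\ell_2)}(C_c^\infty(\X))$ is separately attained. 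This gives the set-theoretic description of the image.

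Next I would apply the results recalled just before the statement of the proposition: since $\lambda(\ell_1,\ell_2) - \rho = \ell_1\beta_1 + \ell_2\beta_2$ satisfies $(\lambda(\ell_1,\ell_2)-\rho)_{\beta_j}=\ell_j \in \Zb_{\geq 0}$ for $j=1,2$, it is a highest restricted weight of $\X$. By \cite[Proposition 21]{HPP14} and the discussion after \eqref{eq:Rlambda}, $\mathcal{R}_{\lambda(\ell_1,\ell_2)}(C_c^\infty(\X)) = \mathcal{E}_{\lambda(\ell_1,\ell_2),\G}(\X)$ is the finite-dimensional irreducible spherical representation of $\G$ of that highest restricted weight. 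The product structure $\X=\X_1\times \X_2$ and the factorization of spherical functions $\varphi_{\lambda_1+\lambda_2} = \varphi_{1,\lambda_1}\otimes \varphi_{2,\lambda_2}$ then yields $\mathcal{E}_{\lambda(\ell_1,\ell_2),\G}(\X) = \mathcal{E}_{(\rho_{\beta_1}+\ell_1)\beta_1,\G}(\X_1)\otimes \mathcal{E}_{(\rho_{\beta_2}+\ell_2)\beta_2,\G}(\X_2)$ as $\G$-modules.

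The main (if modest) obstacle is to upgrade the vector-space sum to a direct sum. For this I would argue that for two distinct elements $(\ell_1,\ell_2), (\ell_1',\ell_2')\in S_{(k)}$, the highest restricted weights $\ell_1\beta_1 + \ell_2\beta_2$ and $\ell_1'\beta_1 + \ell_2'\beta_2$ are different, so the corresponding irreducible spherical representations of $\G$ are non-isomorphic. Equivalently, since the Weyl group of $(\mathfrak{g},\mathfrak{a})$ is $\W = \W_1\times \W_2 = \{\pm 1\}^2$ and the Weyl orbits of the weights $\lambda(\ell_1,\ell_2)$ are pairwise disjoint for distinct pairs in $S_{(k)}$, the joint eigenspaces $\mathcal{E}_{\lambda(\ell_1,\ell_2)}(\X)$ intersect trivially in $C^\infty(\X)$ (two eigenfunctions of the commuting algebra $\mathbb{D}(\X)$ for distinct characters have zero intersection). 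Hence the sum is direct, which yields the asserted decomposition and finite dimensionality of $\Res_{N,k}(C_c^\infty(\X))$. The independence of the residue operator from the choice of point in the fiber over $z_{(k)}$ is already visible in the formula \eqref{eq:Reskeps}, which depends only on $z_{(k)}$ through $S_{(k)}$.
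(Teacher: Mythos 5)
Your proposal is correct and follows the same approach as the paper, which presents its ``proof'' only as the discussion preceding the proposition (recalling that each $\mathcal{R}_{\lambda(\ell_1,\ell_2)}(C^\infty_c(\X))=\mathcal{E}_{\lambda(\ell_1,\ell_2),\G}(\X)$ and asserting the result); you make explicit the two points the paper leaves implicit, namely the directness of the sum and the surjectivity of $\Res_{N,k}$ onto that sum. The directness argument (pairwise disjoint Weyl orbits, hence distinct characters of $\mathbb{D}(\X)$, hence trivially intersecting joint eigenspaces) is exactly right.

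One phrasing issue worth tightening is the surjectivity step. As written, ``the spherical Fourier transforms $\widehat{f_y}(\lambda(\ell_1,\ell_2))$ can be prescribed independently at finitely many points'' does not quite do the job: for fixed $y$ these are scalars, and as functions of $y$ they are precisely what you are trying to construct, so the formulation is either too weak or circular. The standard fix, which is surely what you intend, is to pick a $\K$-invariant $h\in C^\infty_c(\X)$ whose spherical transform $\widehat h$ (an entire $\W$-invariant function of exponential type, by Paley--Wiener) satisfies $\widehat h(\lambda(\ell_1^0,\ell_2^0))=1$ and $\widehat h(\lambda(\ell_1,\ell_2))=0$ for the remaining finitely many $(\ell_1,\ell_2)\in S_{(k)}$. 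Then, using the eigenfunction property $h\times\varphi_\lambda=\widehat h(\lambda)\,\varphi_\lambda$, for any $f'\in C^\infty_c(\X)$ one gets $\Res_{N,k}(f'\times h)=C_{\ell_1^0,\ell_2^0}\,(f'\times\varphi_{\lambda(\ell_1^0,\ell_2^0)})$, so each summand is attained separately. Alternatively, one can bypass Paley--Wiener entirely by observing that $\Res_{N,k}$ is $\G$-equivariant, so its image is a $\G$-submodule of the direct sum of pairwise non-isomorphic irreducibles $\mathcal{E}_{\lambda(\ell_1,\ell_2),\G}(\X)$; such a submodule is a partial sum, and since $C_{\ell_1,\ell_2}\neq 0$ and each $\mathcal{R}_{\lambda(\ell_1,\ell_2)}$ is surjective onto its summand, the image projects nontrivially onto every summand and is therefore the full sum.
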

\bigskip


\begin{thebibliography}{222}
\bibitem{BO00}
Bunke, U. and M.  Olbrich:
The spectrum of Kleinian manifolds. 
\textit{J. Funct. Anal.} \textit{172} (2000), 76--164. 

\bibitem{CGH06}
Cattaneo, L., Graf, G. M. and W. Hunziker:
A general resonance theory based on Mourre's inequality. 
\textit{Ann. Henri Poincar\'e} \textbf{7} (2006), 583--601.

\bibitem{Eb96}
Eberlein, P. B.: \textit{Geometry of nonpositively curved manifolds.} Chicago Lectures in Mathematics, University of Chicago Press, Chicago, IL, 1996.

\bibitem{GV88}
Gangolli, R. and V.S. Varadarajan: \textit{Harmonic analysis of spherical functions on real reductive groups}, Springer Verlag, 1988.

\bibitem{Gui05}
Guillarmou, C.: Meromorphic properties of the resolvent on asymptotically hyperbolic manifolds.
\textit{Duke Math. J.} \textbf{129} (2005), 1--37.

\bibitem{G89}
Guillop\'e, L.: Th\'eorie spectrale de quelques vari\'et\'es \`a bouts.
\textit{Ann. Sci. \'Ecole Norm. Sup.} \textbf{22} (1989), 137--160. 

\bibitem{GZ95}
Guillop\'e, L. and M. Zworski: Polynomial bounds on the number of resonances for the complete spaces of constant negative curvature near infinity, \emph{Asympt. Anal.} \textbf{11} (1995), 1--22.

\bibitem{Heins}
Heins, M.: 
\textit{Complex function theory.}
Pure and Applied Mathematics, 28 Academic Press, New York-London, 1968 

\bibitem{He1}
Helgason, S:.
\textit{Differential geometry, Lie groups, and symmetric spaces}, Academic Press, Inc.,
Orlando, FL, 1978.


\bibitem{He2}
\bysame: \textit{Groups and Geometric Analysis. Integral Geometry, Invariant Differential Operators, and Spherical Functions.} Pure and Applied Mathematics, 113. Academic Press, Inc., Orlando, FL, 1984.

\bibitem{He3}
\bysame: \textit{Geometric Analysis on Symmetric Spaces}. Second edition,
American Mathematical Society, Providence, 2008.

\bibitem{HP09}
Hilgert, J. and A. Pasquale:
Resonances and residue operators for symmetric spaces of rank one,
\textit{J. Math. Pures Appl. (9)} \textbf{91} (2009), no. 5, 495--507.

\bibitem{HPP14}
Hilgert, J., Pasquale, A. and T. Przebinda: Resonances for the Laplacian on Riemannian symmetric spaces: the case of SL(3,R)/SO(3), preprint  arxiv:1411.6527, 43 pages, 2014.

\bibitem{Hunz90}
Hunziker, W.:
Resonances, metastable states and exponential decay laws in perturbation theory.
\textit{Comm. Math. Phys.} \textbf{132} (1990), 177--188. 

\bibitem{Lovett}
Lovett, S.: \textit{Abstract Algebra: Structures and Applications}, Chapman and Hall/CRC Press, 
2015.

\bibitem{MM87}
Mazzeo, R. and R.  Melrose:
Meromorphic extension of the resolvent on complete spaces with asymptotically constant 
negative curvature, \textit{J. Funct. Anal.} \textbf{75} (1987), no. 2, 260--310.

\bibitem{MV02}
Mazzeo, R. and A. Vasy:  Resolvents and Martin boundaries of product spaces. \textit{Geom. Funct. Anal.} \textbf{12} (2002), no. 5, 1018--1079.

\bibitem{MV05}
\bysame:
Analytic continuation of the resolvent of the Laplacian
on symmetric spaces of noncompact type. \emph{J. Funct. Anal.} \textbf{228} (2005), no. 2, 311--368.

\bibitem{MV07}
\bysame: Scattering theory on ${\rm SL}(3)/{\rm SO}(3)$: connections with quantum 3-body scattering. \emph{Proc. Lond. Math. Soc. (3)} \textbf{94} (2007), no. 3, 545–-593. 

\bibitem{Mel93}
Melrose, R.: \textit{The Atiyah-Patodi-Singer index theorem.}
Research Notes in Mathematics, 4. A K Peters, Ltd., Wellesley, MA, 1993.

\bibitem{MW00}
Miatello, R. J. and C. E.  Will: The residues of the resolvent on Damek-Ricci
spaces,\emph{ Proc. Amer. Math. Soc.} \textbf{128} (2000), no. 4, 1221--1229.

\bibitem{Muck04}
M\"uck, M.:
Construction of metastable states in quantum electrodynamics. 
\textit{Rev. Math. Phys.} \textbf{16} (2004), 1--28.

\bibitem{Str05}
Strohmaier, A.:
Analytic continuation of resolvent kernels on noncompact symmetric spaces.
\emph{Math. Z.} \textbf{250} (2005), no. 2, 411--425.


\end{thebibliography}
\end{document}